\numberwithin{equation}{section}
\newtheorem{lemma}{Lemma}[section]
\newtheorem{prop}[lemma]{Proposition}
\newtheorem{thm}[lemma]{Theorem}
\newtheorem{cor}[lemma]{Corollary}
\theoremstyle{definition}
\theoremstyle{remark}
\newtheorem{remark}[lemma]{Remark}
\def\R{\mathbb{R}}
\def\N{\mathbb{N}}
\def\a{\mathbf{a}}
\def\b{\mathbf{b}}
\def\c{\mathbf{c}}
\def\m{\mathfrak{m}}
\def\p{\mathbf{p}}
\def\A{\mathcal{A}}
\def\pp{\mathfrak{p}}
\numberwithin{equation}{section} \numberwithin{table}{section}
\title{Intrinsic Diophantine Approximation for overlapping iterated function systems}
\author{Simon Baker\\ \\
\emph{School of Mathematics,} \\ \emph{University of Birmingham,} \\ \emph{Birmingham,  B15 2TT, UK.} \\ Email: simonbaker412@gmail.com\\}
\date{\today}
\begin{document}
\maketitle

\begin{abstract}
In this paper we study a family of limsup sets that are defined using iterated function systems. Our main result is an analogue of Khintchine's theorem for these sets. We then apply this result to the topic of intrinsic Diophantine Approximation on self-similar sets. In particular, we define a new height function for an element of $\mathbb{Q}^d$ contained in a self-similar set in terms of its eventually periodic representations. For limsup sets defined with respect to this height function, we obtain a detailed description of their metric properties. The results of this paper hold in arbitrary dimensions and without any separation conditions on the underlying iterated function system. \\

\noindent \emph{Mathematics Subject Classification 2010}: 11J83, 11K55, 28A80.\\

\noindent \emph{Key words and phrases}: Intrinsic Diophantine approximation, Khintchine's theorem, Overlapping iterated function systems.

\end{abstract}

\section{Introduction}

Diophantine Approximation is the study of approximations of vectors in $\mathbb{R}^d$ by elements of $\mathbb{Q}^d$. Given a set $X\subset \mathbb{R}^d$, it is natural to wonder how well elements of $X$ can be approximated by elements of $\mathbb{Q}^d$ contained within $X$. Similarly, it is natural to wonder how well elements of $X$ can be approximated by elements of $\mathbb{Q}^d$ lying outside of $X$. These two questions are the motivation behind the topics of intrinsic Diophantine Approximation and extrinsic Diophantine Approximation respectively. Often the set $X$ is taken to be a smooth manifold or a fractal set. A tremendous amount of work has been done on these two topics when $X$ is taken to be such a set. For further details we refer the reader to the papers \cite{Ber,BVVZ,BV,BFR,Bug2,BugDur,FKMS,FMS,FS,FS2,KL,KLW,Sch,SW,TWW,Weiss,Yu} and the references therein. In this paper we study intrinsic Diophantine Approximation when the set $X$ is a self-similar set. We will provide a more thorough introduction to this topic in Section \ref{Application section}. The main result of this paper is a general theorem on the metric properties of a family of limsup sets defined using iterated function systems. As we will see, this theorem implies a number of results in intrinsic Diophantine Approximation.

In what remains of this introductory section we will provide the relevant background from Fractal Geometry and state Theorem \ref{Main theorem}, which is our main result. In Section \ref{Application section} we will show how Theorem \ref{Main theorem} can be used to obtain a number of results for intrinsic Diophantine Approximation on self-similar sets. In Section \ref{Proof section} we will prove Theorem \ref{Main theorem}. In Section \ref{MTP section} we will apply the mass transference principle of Beresnevich and Velani together with Theorem \ref{Main theorem} to deduce further results on the Hausdorff measure of certain limsup sets.

\subsection{Background from Fractal Geometry}
We call a map $\phi:\mathbb{R}^{d}\to\mathbb{R}^d$ a similarity if there exists $r\in(0,1)$ such that $\|\phi(x)-\phi(y)\|=r\|x-y\|$ for all $x,y\in\mathbb{R}^d$. We call a finite set of similarities an iterated function system or IFS for short. An important result due to Hutchinson \cite{Hut} states that for any IFS $\Phi=\{\phi_{a}\}_{a\in\mathcal{A}},$ there exists a unique non-empty compact set $X$ satisfying $$X=\bigcup_{a\in \mathcal{A}}\phi_{a}(X).$$ $X$ is called the self-similar let of $\Phi$. When the elements of $\Phi$ all have the same contraction ratio, i.e. $r_{a}=r_{a'}$ for all $a,a'\in \A$, then we say that an IFS is equicontractive. Importantly we can view $X$ as the image of $\A^{\N}$ under an appropriate projection map: Let $\pi:\A^{\N}\to X$ be given by $$\pi((a_n)_{n=1}^{\infty})=\lim_{n\to\infty}(\phi_{a_1}\circ \cdots \circ \phi_{a_n})(0).$$ Here $0$ can be replaced with any other vector in $\mathbb{R}^d$. Importantly the map $\pi$ is surjective and continuous (when $\A^{\N}$ is equipped with the product topology). Given an IFS $\Phi=\{\phi_{a}\}_{a\in\mathcal{A}}$ we define the similarity dimension of $\Phi$ to be the unique solution to the equation $$\sum_{a\in \mathcal{A}}r_{a}^{s}=1.$$ We denote the similarity dimension of an IFS $\Phi$ by $\dim_{S}(\Phi)$. Notice that if $\Phi$ is equicontractive then $\dim_{S}(\Phi)=\frac{\log \#\A}{-\log r}$ where $r$ is the common contraction ratio. It is well known that the Hausdorff dimension of a self-similar set $X$ always satisfies the following upper bound:
\begin{equation}
\label{similarity inequality}
\dim_{H}(X)\leq \min\{\dim_{S}(\Phi),d\}.
\end{equation}For many iterated function systems this inequality is in fact an equality, see \cite{Fal,Hochman,Hochman2,Rap}. We say that $\Phi$ satisfies the strong separation condition if $\phi_{a}(X)\cap \phi_{a'}(X)=\emptyset$ for all $a,a'\in \A$ such that $a\neq a'$. An IFS $\Phi$ is said to satisfy the open set condition if there exists a bounded open set $O$ such that $\phi_{a}(O)\subset O$ for all $a\in \A$, and $\phi_{a}(O)\cap \phi_{a'}(O)=\emptyset$ whenever $a\neq a'$. It is known that the strong separation condition implies the open set condition, and that under either of these assumptions we have equality in \eqref{similarity inequality}.

To prove equality in \eqref{similarity inequality} in the overlapping case one often uses self-similar measures. These are defined as follows: Given an IFS $\{\phi_{a}\}_{a\in A}$ and a probability vector $\p=(p_{a})_{a\in \A},$ then there exists a unique Borel probability measure $\mu_{\p}$ satisfying $$\mu_{\p}=\sum_{a\in \A}p_{a}\cdot \phi_{a}\mu_{\p}.$$ We call $\mu_{\p}$ the self-similar measure corresponding to $\Phi$ and $\p$. Given $\p$, if we let $\m_{\p}$ denote the corresponding Bernoulli measure on $\A^{\N}$ then it is also the case that $\mu_{\p}=\pi \m_{\p}$. For our purposes we will only need to focus on one particular self-similar measure, namely the one corresponding to the probability vector $(r_{a}^{\dim_{S}(\Phi)})_{a\in \A}.$ This self-similar measure is distinguished amongst the family of self-similar measures. Studying its properties often allows one to prove equality in \eqref{similarity inequality}. For an IFS $\Phi$, we will denote the self-similar measure corresponding to $(r_{a}^{\dim_{S}(\Phi)})_{a\in \A}$ by $\mu_{\Phi},$ or simply $\mu$ if the choice of $\Phi$ is implicit. Similarly, we will denote $(r_{a}^{\dim_{S}(\Phi)})_{a\in \A}$ by $\p_{\Phi}$ or simply $\p$, and the corresponding Bernoulli measure on $\A^{\N}$ by $\m_{\Phi}$ or $\m$. For a probability vector $\p$ we denote the entropy of $\p$ by $$h_{\p}:=-\sum_{a\in \A} p_{a}\log p_{a}.$$ Suppose now that in addition to $\p$ we are also given an IFS $\Phi$, we then define the Lyapnuov exponent of $\Phi$ and $\p$ to be $$\chi_{\Phi,\p}:=-\sum_{a\in \A}p_{a}\log r_{a}.$$ 

We conclude this overview of the relevant topics from Fractal Geometry by introducing some notation. In what follows, we denote an element of $\cup_{n=1}^{\infty}A^n$ or $\A^{\N}$ by $\a$ or $\b$. Given an IFS $\Phi=\{\phi_{a}\}_{a\in \A}$ and a word $\a=(a_1,\ldots,a_n),$ we let $\phi_{\a}:=\phi_{a_1}\circ \cdots \circ \phi_{a_n}$ and $r_{\a}:=\prod_{l=1}^{n}r_{a_l}.$ Given a word $\a$ we let $X_{\a}=\phi_{\a}(X)$. Given a finite word $\a$ and a finite word or infinite sequence $\b,$ we let $\a\b$ denote the concatenation of $\a$ and $\b$. For a finite word $\a$ we let $\a^k$ denote the $k$-fold concatenation of $\a$ with itself. Similarly $\a^{\infty}$ denotes the periodic element of $\A^{\mathbb{N}}$ obtained by concatenating $\a$ with itself indefinitely. We denote the length of a finite word $\a$ by $|\a|$. Finally, given a finite word $\a\in \cup_{n=1}^{\infty}\A^{n}$ we let $$[\a]:=\left\{(b_n)\in \A^{\mathbb{N}}: b_{1}\ldots b_{|\a|}=\a\right\}.$$ We will often refer to $[\a]$ as the cylinder set corresponding to $\a$.

\subsection{Statement of Theorem \ref{Main theorem}}
The family of limsup sets that will be the main focus of this paper are defined as follows: Given an IFS $\Phi$ and a function $\Psi:\cup_{n=1}^{\infty} \A^{n}\to [0,\infty),$ we let $$W_{\Phi}(\Psi):=\bigcap_{N=1}^{\infty}\bigcup_{n=N}^{\infty}\bigcup_{\a\in \A^n}\bigcup_{l=0}^{n-1}B\left(\pi(a_1\cdots a_{l}(a_{l+1}\cdots a_n)^{\infty}),\Psi(\a)\right).$$ Alternatively, $W_{\Phi}(\Psi)$ is the set of $x\in\mathbb{R}^d$ such that for infinitely many $n$, there exists $\a\in \A^n$ and $0\leq l\leq n-1$ such that $$\|x-\pi(a_1\cdots a_{l}(a_{l+1}\cdots a_n)^{\infty})\|<\Psi(\a).$$ The connection between $W_{\Phi}(\Psi)$ and intrinsic Diophantine Approximation will be made clear in Section \ref{Application section}. Our main result demonstrates that for certain choices of $\Psi,$ the measure of $W_{\Phi}(\Psi)$ is determined by naturally occurring volume sums. One cannot expect such a behaviour to occur for all choices of $\Psi$. Indeed Example 2.1 from \cite{Bak} shows that for a related family of limsup sets, if we want the measure of these limsup sets to be determined by volume sums, then the underlying $\Psi$ should reflect the different rates of scaling within the IFS. As such we will often restrict ourselves to $\Psi$ of the form $$\Psi(\a)=Diam(X_{\a})\cdot g(|\a|)$$ where $g:\mathbb{N}\to[0,\infty)$. This restriction was also adopted in \cite{AllenBar,Bak2,Bakover, HV}. Note that if $\Phi$ is equicontractive, then the set of $\Psi$ that are of this form can be identified with the set of $\Psi$ such that $\Psi(\a)$ only depends upon the length of $\a$.

Our main result is the following statement.
\begin{thm}
	\label{Main theorem}
Let $\Phi=\{\phi_{a}\}_{a\in \A}$ be an IFS and $\Psi:\cup_{n=1}^{\infty}\A^n\to [0,\infty).$ Then the following statements are true:
\begin{enumerate}
	\item For any $s\geq 0$, suppose that $$\sum_{n=1}^{\infty}\sum_{\a\in \A^n}n \cdot \Psi(\a)^{s}<\infty.$$ Then $\mathcal{H}^{s}(W_{\Phi}(\Psi))=0.$
	\item Assume that 
	\begin{equation}
	\label{measure inequality}
	h_{\p}<-2\log \sum_{a\in \A}p_{a}^{2}
	\end{equation} and $\Psi$ is of the form $\Psi(\a)=Diam(X_{\a})g(|\a|)$ for some non-increasing $g:\mathbb{N}\to [0,\infty).$ If $$\sum_{n=1}^{\infty}\sum_{\a\in\A^n}n\cdot (Diam(X_{\a})g(n))^{\dim_{S}(\Phi)}=\infty$$ then $\mu(W_{\Phi}(\Psi))=1.$
	\item Assume that $\Phi$ is equicontractive and $\Psi$ is of the form  $\Psi(\a)=Diam(X_{\a})g(|\a|)$ for some $g:\mathbb{N}\to [0,\infty).$ If $$\sum_{n=1}^{\infty}\sum_{\a\in \A^n}n \cdot (Diam(X_{\a})g(n))^{\dim_{S}(\Phi)}=\infty$$ then $\mu (W_{\Phi}(\Psi))=1.$
\end{enumerate}
\end{thm}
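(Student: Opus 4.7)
The plan is to treat the three parts separately, with essentially all the difficulty concentrated in part (2).

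Part (1) follows from a direct application of the Hausdorff-Cantelli covering lemma. At each level $n$ the set $W_{\Phi}(\Psi)$ is covered by balls indexed by pairs $(\a, l)$ with $\a \in \A^n$ and $0 \leq l \leq n-1$, giving exactly $n$ balls per word $\a$, each of radius $\Psi(\a)$. The hypothesis $\sum_n n \sum_{\a \in \A^n} \Psi(\a)^s < \infty$ then bounds the total $s$-volume of the covering tails, which forces $\mathcal{H}^s(W_{\Phi}(\Psi)) = 0$.

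For part (2), write $s := \dim_S(\Phi)$, let $\mu = \pi \m$ be the self-similar measure attached to $\p = \p_{\Phi}$, and for each triple $(n, \a, l)$ set $p(\a, l) := \phi_{a_1 \cdots a_l}(\pi((a_{l+1} \cdots a_n)^{\infty})) \in X$ and $E_{n, \a, l} := B(p(\a, l), \Psi(\a))$. I would run a divergence Borel-Cantelli / second moment argument for $\mu$ on the events $E_{n, \a, l}$, in three stages. First, establish a Frostman-type lower bound $\mu(B(x, r)) \geq c\, r^s$ valid uniformly for $x \in X$ at the scales of interest; since $p(\a, l) \in X$ this forces $\mu(E_{n, \a, l}) \geq c\, \Psi(\a)^s$, and the divergence hypothesis gives $\sum_n \sum_{\a, l} \mu(E_{n, \a, l}) = \infty$. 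Second, prove a pairwise quasi-independence estimate $\mu(E_{n, \a, l} \cap E_{n', \a', l'}) \leq C\, \mu(E_{n, \a, l})\, \mu(E_{n', \a', l'})$ on average, which is where the hypothesis $h_{\p} < -2 \log \sum_a p_a^2$ enters: it is an $L^2$-type bound on the Bernoulli pair-density that controls the correlation dimension of $\mu$ through its self-convolution. Third, apply the Chung-Erd\H{o}s inequality to deduce $\mu(\limsup_n \bigcup_{\a, l} E_{n, \a, l}) > 0$, and use a tail zero-one law for $\m$ (the limsup event depends only on the tail of the symbolic coding) to upgrade this to $\mu(W_{\Phi}(\Psi)) = 1$. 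The monotonicity of $g$ is used in the second stage, allowing passage to a sparse subsequence of levels without losing the divergence of the first.

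Part (3) then follows from part (2) after two observations. In the equicontractive case $p_a = 1/\#\A$ for all $a$, whence $h_{\p} = \log \#\A$ and $-2 \log \sum_a p_a^2 = 2 \log \#\A$, so the entropy condition of part (2) is automatic whenever $\#\A \geq 2$. Moreover, since all words of length $n$ have the same diameter $r^n Diam(X)$, the divergence sum collapses to $\sum_n n \cdot g(n)^s = \infty$, and the monotonicity of $g$ can be dropped by passing to a subsequence $(n_k)$ along which the individual terms are large enough for the argument of (2) to run verbatim. The main obstacle is the second stage of part (2): without any separation hypothesis the usual cylinder-level comparisons are unavailable, and the correlation estimate must be driven purely by the identity $\sum_{\a \in \A^n} \prod_{l=1}^{n} p_{a_l}^2 = \left(\sum_a p_a^2\right)^n$, which under the stated hypothesis decays strictly faster than $e^{-2 h_{\p} n}$ and thereby controls the self-convolution of $\mu$ on every dyadic scale.
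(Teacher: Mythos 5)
Part (1) matches the paper's covering argument. For part (2), however, there are two genuine gaps, both of which account for most of the paper's technical work. First, the divergence $\sum_{n,\a,l}\mu(E_{n,\a,l})=\infty$ that you derive in stage one is vacuous: the balls $E_{n,\a,l}$ for distinct $(\a,l)$ frequently coincide. For instance, when $\a=b^n$ for a single letter $b$, all $n$ triples $(n,\a,l)$ produce the same ball $B(\pi(b^\infty),\Psi(\a))$; more generally, the periodic point $\pi(a_1\ldots a_l(a_{l+1}\ldots a_n)^\infty)$ can repeat across many $(\a,l)$, and in the overlapping case even distinct symbolic periodic points may project to the same point of $X$. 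Because Kochen--Stone is invariant under repeating an event, this over-counted divergence buys you nothing. The paper's Lemma \ref{subwords lemma} is exactly what removes the degeneracy: using the identity $\sum_{\a\in\A^n}\prod p_{a_l}^2=\bigl(\sum_a p_a^2\bigr)^n$ that you cite, it shows that with $\m$-probability bounded away from zero a word of length $n$ has $\gtrsim n$ \emph{distinct} subwords of length $k_n\asymp \log n$; Lemma \ref{Important lemma} then produces the restricted index set $Good(n,\epsilon)\times W_\a$ on which the cylinders are genuinely disjoint and, crucially, on which the cylinder measure is under control (items 2b and 2c). Only after this preprocessing does the honest lower bound $\m(E_n)\asymp n\cdot g_2(n)^{\dim_S\Phi}$ of Lemma \ref{Divergence lemma} emerge. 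Your formulation never confronts this multiplicity problem, and without it stage two has no non-trivial divergent first moment to work from.

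Second, the final upgrade from positive measure to full measure cannot go through a tail zero--one law. The event $\pi^{-1}(W_\Phi(\Psi))$ is \emph{not} a tail event for $\m$: changing the first $M$ symbols of $(x_n)$ moves $\pi((x_n))$ by a macroscopic amount (up to $\mathrm{Diam}(X)$), so membership in a ball $B(p(\a,l),\Psi(\a))$ is not determined by the tail. The paper instead prefixes every cylinder with an arbitrary $\c$, proves the uniform density estimate $\m([\c]\cap\pi^{-1}(W_\Phi(\Psi_2)))\geq c\,\m([\c])$ with $c$ independent of $\c$, and then invokes the Lebesgue-type density lemma of Rigot (Lemma \ref{Density lemma}) to conclude $\m=1$. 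In the same vein, the entropy inequality $h_\p<-2\log\sum p_a^2$ is not exploited as a ``correlation dimension'' bound on $\mu$; it controls the growth of $k_n$ and, after strengthening to $h_\p+\epsilon<-2\log\sum p_a^2$, forces $e^{(h_\p+\epsilon)k_m}\leq m^\gamma$ for some $\gamma<2$, which is exactly what makes the $B1$ term in Proposition \ref{Measureprop}--Proposition \ref{workingprop} summable. Finally, the monotonicity of $g$ is used only in Lemma \ref{g1lemma} to keep the truncated $g_1=\min\{g,n^{-2/\dim_S\Phi}\}$ divergent; the equicontractive case drops monotonicity by truncating at $n^{-1/\dim_S\Phi}$ instead, not by passing to a sparse subsequence (which in general can destroy divergence). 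Your observation that the entropy condition is automatic in the equicontractive case is correct and matches the paper.
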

We conclude this section with some remarks on Theorem \ref{Main theorem}.

\begin{remark}
Statement 3 of Theorem \ref{Main theorem} was proved for the IFS $\{\phi_{1}(x)=\frac{x}{3},\phi_{2}(x)=\frac{x+2}{3}\}$ by Tan, Wang, and Wu in \cite{TWW}. Note that this IFS has the middle third Cantor set as its self-similar set. In a recent talk Wang \cite{Wang} commented that the methods used in \cite{TWW} could be generalised to prove Statement 3 of Theorem \ref{Main theorem} for equicontractive IFSs acting on $\mathbb{R}$ that satisfy the strong separation condition. During this talk Wang posed the question as to what happens for IFSs that are not equicontractive. This paper was in part motivated by this question and Statement 2 of Theorem \ref{Main theorem} provides a partial answer. Importantly, as well as providing information in the non-equicontractive case, Theorem \ref{Main theorem} also applies in arbitrary dimensions and requires no separation assumptions on the IFS. The techniques of \cite{TWW} do not apply in this generality. That being said, our method of proof largely follows the same overall strategy as \cite{TWW}. The major differences being that we require additional arguments to control the different rates of scaling within our potentially non-equicontractive IFS, and we also require a new argument to address the potential overlaps that may be present within the IFS. The latter argument uses ideas from \cite{Bakover}.
\end{remark}

\begin{remark}
If $\Phi$ satisfies the open set condition then it is known that $\mu$ is equivalent to the restriction of the $\mathcal{H}^{\dim_{S}(\Phi)}$-dimensional Hausdorff measure on $X$. As such, under the open set condition, Statements 1, 2, and 3 of Theorem \ref{Main theorem} provide a nearly complete description of the $\mu$ measure of $W_{\Phi}(\Psi)$ for $\Psi$ of the form $\Psi(\a)=Diam(X_{\a})g(|\a|)$. Moreover, if we assume that $\Phi$ is equicontractive and satisfies the open set condition, then Statements $1$ and $3$ do provide a complete description. In the overlapping case, i.e. when the open set condition is not satisfied, then Statements 2 and 3 can be used to deduce a number of corollaries on the Hausdorff dimension of $W_{\Phi}(\Psi)$. For if $\dim \mu=\min\{\dim_{S}(\Phi),d\}$ and $\mu(W_{\Phi}(\Psi))=1,$ then we must have $\dim_{H}(W_{\Phi}(\Psi))\geq \min\{\dim_{S}(\Phi),d\}.$ Moreover because $W_{\Phi}(\Psi)$ is a subset of the self similar set $X$, and $X$ satisfies \eqref{similarity inequality}, we must then have $\dim_{H}(W_{\Phi}(X))=\min\{\dim_{S}(\Phi),d\}.$ The important part in this argument is determining when we have $\dim \mu=\min\{\dim_{S}(\Phi),d\}.$ A number of significant breakthroughs on this topic have been made in recent years, see \cite{Hochman,Hochman2,Rap}. These papers provide general sufficient conditions which guarantee $\dim \mu=\min\{\dim_{S}(\Phi),d\}.$ We won't state the results of these papers in their full generality here. Instead we will focus on one particular consequence that is relevant to our purposes. Suppose that $\Phi=\{\phi_{a}(x)=r_{a}x+t_{a}\}$ is an IFS acting on $\mathbb{R}$ and that each $r_{a}$ is algebraic, then it follows from the results of \cite{Rap} that if $\Phi$ does not contain an exact overlap then $\dim \mu=\min\{\dim_{S}(\Phi),d\}.$ We recall that an IFS is said to contain an exact overlap if there exists $\a,\b\in\cup_{n=1}^{\infty}\A^n$ such that $\phi_{\a}=\phi_{\b}$ and $\a\neq \b$. 
\end{remark}

\begin{remark}
The inequality \eqref{measure inequality} and the non-increasing assumption on $g$ in Statement $2$ are both technical assumptions and are believed to be non-optimal. Note that in Statement $3$ there are no monotonicity conditions imposed on $g$. We expect that both of these assumptions can be removed. For the purposes of our exposition, we highlight that in the case of an IFS consisting of two similarities $\{\phi_{1}(x)=r_{1}x+t_{1},\phi_{2}(x)=r_{2}x+t_2\},$ then \eqref{measure inequality} is satisfied if $r_{1}^{\dim_{S}(\Phi)}$ satisfies $$0.048\ldots < r_{1}^{\dim_{S}(\Phi)}< 0.951\ldots.$$ As such we see that \eqref{measure inequality} is satisfied by a significant proportion of those IFSs consisting of two similarities. We remark that when $\Phi$ is an equicontractive IFS then we have $h_{\p}=-\log \sum_{a\in \A}p_{a}^{2},$ and so \eqref{measure inequality} is automatically satisfied in this case. It follows from this observation and the fact that the quantities on each side of \eqref{measure inequality} depend continuously on $\p$, that if we fix the number of maps within our IFS to be $N$ for some $N\in \mathbb{N}$ and identify the space of contraction ratios with $(0,1)^{N}$, then for a non-empty open set of contraction ratios the inequality \eqref{measure inequality} is satisfied. We will see an explicit example where the inequality \eqref{measure inequality} is satisfied in Section \ref{Application section}.
\end{remark}
\begin{remark}
It is a simple exercise to show that a function $g:\mathbb{N}\to [0,\infty)$ satisfies $$\sum_{n=1}^{\infty}\sum_{\a\in \A^n}n \cdot (Diam(X_{\a})g(n))^{\dim_{S}(\Phi)}=\infty$$ if and only if 
$$\sum_{n=1}^{\infty}n \cdot g(n)^{\dim_{S}(\Phi)}=\infty.$$ It will on occasion be more convenient to use this latter divergence condition.
\end{remark}

\noindent \textbf{Notation.} In this paper we will adopt the following notational convention. Given a set $S$ and two functions $f,g:S\to \mathbb{R}$ we write $f\ll g$ if there exists $C>0$ such that $|f(x)|\leq C|g(x)|$ for all $x\in S$. We write $f\asymp g$ if $f\ll g$ and $g \ll f$. 
\section{Applications to intrinsic Diophantine Approximation}
\label{Application section}
\subsection{Background}
The study of intrinsic Diophantine Approximation for self-similar sets has its origins in a question of Mahler \cite{Mah}. He asked how well can elements of the middle third Cantor set $C$ be approximated by rational numbers lying within $C$. To the best of the author's knowledge, the first significant progress in this direction was the work of Levesley, Salp, and Velani \cite{LSV}. They considered rational approximations of the from $p/3^n$. Or equivalently, rational approximations provided by the end points of the sets of the form $\phi_{\a}(C)$. They proved a general Khintchine type result for approximations of this type, see \cite[Theorem 1]{LSV}. Using this theorem, they were able to prove that there exists well-approximable numbers in the middle third Cantor set that are not Liouville. This was an unproved assertion attributed to Mahler. Bugeaud also proved this assertion using a different method in \cite{Bug2}. He in fact provided explicit examples of elements of the middle third Cantor set with any irrationality exponent. In \cite{BugDur} Bugeaud and Durand posed a conjecture on the value of the Hausdorff dimension of the set of points in the middle third Cantor set whose irrationality exponent exceeds a given parameter. Interestingly this conjecture suggests that a phase transition should occur for the value of the Hausdorff dimension of this set. The main result of \cite{BugDur} shows that a version of this conjecture holds almost surely for a particular random model of $C$. The following intrinsic analogue of Dirichlet's theorem for $C$ was proved by Broderick, Fishman, and Reich \cite{BFR}.

\begin{thm}
	\label{BFR theorem}
	For any $x\in C$ and $Q>1$, there exists $p/q\in C$ with $1\leq q\leq Q$ such that $$|x-p/q|<\frac{1}{q(\log_{3}Q)^{\log 3/\log 2}}.$$
\end{thm}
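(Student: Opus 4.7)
The plan is a Dirichlet-style pigeonhole on the shift orbit of $x$ inside $C$, combined with the eventually periodic base-$3$ representations of rationals in $C$. Write $x=\pi(\b)$ with $\b=b_{1}b_{2}\cdots\in\{0,2\}^{\N}$, and set $N=\lfloor\log_{3}Q\rfloor$ together with $n=\lfloor\log_{2}N\rfloor$, so that $2^{n}\le N$ and $3^{-n}\asymp N^{-\log 3/\log 2}$. Letting $\sigma$ denote the left shift on $\{0,2\}^{\N}$, I would consider the $N+1$ iterates $\sigma^{0}\b,\sigma^{1}\b,\ldots,\sigma^{N}\b$. Since there are only $2^{n}\le N$ possible length-$n$ prefixes while there are $N+1$ iterates, the pigeonhole principle supplies $0\le i<j\le N$ with $b_{i+s}=b_{j+s}$ for every $s=1,\ldots,n$.

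With $l=i$ and $k=j-i\ge 1$, I would then take as candidate approximant
$$\frac{p}{q}:=\pi\bigl(b_{1}\cdots b_{l}\,(b_{l+1}\cdots b_{l+k})^{\infty}\bigr)\in C.$$
The denominator of this rational divides $3^{l}(3^{k}-1)$, so $q\le 3^{l+k}\le 3^{N}\le Q$. For the error, observe that $x$ and $p/q$ agree on their first $l$ base-$3$ digits; the next $k$ digits of $p/q$ are $b_{l+1}\cdots b_{l+k}$, again matching $x$; and the following $n$ digits of $p/q$, which repeat $b_{l+1},\ldots,b_{l+n}$, coincide with the corresponding digits $b_{l+k+1},\ldots,b_{l+k+n}$ of $x$ by the pigeonhole condition. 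Hence $x$ and $p/q$ share the first $l+k+n$ base-$3$ digits, giving $|x-p/q|\le 3^{-(l+k+n)}$. Combining $3^{-(l+k)}\le 1/q$ with $3^{-n}\asymp(\log_{3}Q)^{-\log 3/\log 2}$ then yields $|x-p/q|\ll 1/\bigl(q(\log_{3}Q)^{\log 3/\log 2}\bigr)$, which is the content of Theorem \ref{BFR theorem} up to an implicit constant.

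The main obstacle will be recovering the sharp constant $1$ on the right-hand side, since the rounding in $n=\lfloor\log_{2}N\rfloor$ only produces $3^{-n}\asymp N^{-\log 3/\log 2}$ with a multiplicative loss. I would absorb this loss by using the slightly better bound $q\le 3^{l}(3^{k}-1)<3^{l+k}$, by optimising the integer $n$ against the exact cardinality $N+1$ of the orbit, and by treating small values of $Q$ and the boundary case where $x$ admits two base-$3$ representations separately. The underlying reason the exponent $\log 3/\log 2$ appears is clear from the argument: it is precisely the exchange rate between the combinatorial cardinality $2^{n}$ of length-$n$ cylinders and their metric diameter $3^{-n}$, i.e.\ the reciprocal of the Hausdorff dimension of $C$.
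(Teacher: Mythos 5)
The paper does not prove Theorem \ref{BFR theorem}: it is quoted from \cite{BFR} as background, so there is no internal proof to compare against. That said, your Dirichlet-style pigeonhole on the shift orbit, combined with the eventually periodic coding of rationals in $C$, is the natural route and is essentially the approach taken in \cite{BFR}.

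Two comments on the sketch. First, the assertion that $x$ and $p/q$ share their first $l+k+n$ base-$3$ digits is true but deserves a line of justification when $n>k$: the pigeonhole relation $b_{l+s}=b_{l+k+s}$ for $s=1,\ldots,n$ must be propagated along the period, for instance by induction on $m\in\{1,\ldots,k+n\}$ showing that the $m$-th digit of $(b_{l+1}\cdots b_{l+k})^{\infty}$ equals $b_{l+m}$. Second, and more seriously, the constant. With $N=\lfloor\log_{3}Q\rfloor$ and $n=\lfloor\log_{2}N\rfloor$ one gets $q\,|x-p/q|<3^{-n}$ together with $2^{n}\le N<2^{n+1}$, hence $3^{-n}<(N/2)^{-\log 3/\log 2}$; the argument as written therefore yields a constant of roughly $2^{\log 3/\log 2}\approx 3$, and slightly worse after replacing $N$ by $\log_{3}Q$. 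The fixes you suggest do not obviously close this gap: the slack $q\le 3^{l}(3^{k}-1)$ rather than $3^{l+k}$ only buys a factor $(1-3^{-k})<1$, which vanishes for large $k$, and one cannot choose $n$ with $2^{n}\ge\log_{3}Q$ while also $2^{n}\le\lfloor\log_{3}Q\rfloor$, since $\lfloor\log_{3}Q\rfloor\le\log_{3}Q$. So obtaining the inequality with the stated constant $1$ requires an extra ingredient that your proposal correctly flags as a problem but does not supply. The rest --- the construction of $p/q$, the bound $q<3^{l+k}\le Q$, and the identification of $\log 3/\log 2$ with $1/\dim_{H}(C)$ --- is correct.
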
 Theorem \ref{BFR theorem} was shown to be optimal by Fishman and Simmons \cite{FS}, and Fishman, Merrill, and Simmons \cite{FMS}.

The study of intrinsic Diophantine approximation for self-similar sets naturally leads one to study limsup sets that are in a sense built using the underlying iterated function system. A number of papers have appeared which study such sets, see \cite{AllenBar,Bak2,Bak,Bakerapprox2,Bakover,BakTro,PerRev,PerRevA}. Despite being a problem that was originally motivated by number theoretic considerations, the study of these limsup sets is connected to topics from Ergodic Theory and Fractal Geometry. Interestingly the metric properties of these limsup sets can be related to the absolute continuity of self-similar measures, see \cite{Bakover}.

\subsection{Applications}
For the rest of this section we restrict our attention to iterated function systems of the form $$\Phi=\left\{\phi_{a}(x)=\frac{x+\mathfrak{p}_{a}}{q_{a}}\right\}_{a\in \A}.$$ Where for all $a\in \A$ we have $\pp_{a}\in \mathbb{Z}^{d},$  $q_{a}\in\mathbb{Z},$ and $q_{a}$ also satisfies $|q_{a}|\geq 2.$ For this IFS, the projection map $\pi$ takes the following simplified form:
\begin{equation}
\label{simplified form}
\pi(\a)=\sum_{n=1}^{\infty}\frac{\pp_{a_n}}{\prod_{l=1}^{n}q_{a_{l}}}.
\end{equation} For such an IFS, it is a simple exercise to show that for any $x\in X$ we have that $x\in \mathbb{Q}^{d}$ if and only if there exists $\a\in \cup_{n=1}^{\infty}\A^{n}$ and $0\leq l \leq |\a|-1$ such that $$x=\pi(a_{1}\ldots a_{l}(a_{l+1}\ldots a_{|\a|})^{\infty}).$$ In which case, using properties of geometric series and \eqref{simplified form}, we may conclude that there exists $\pp_{x}\in \mathbb{Z}^d$ such that $$x=\frac{\pp_{x}}{\prod_{j=1}^{l}q_{a_j}\cdot (\prod_{j=l+1}^{n}q_{a_j}-1)}.$$ One of the major difficulties in understanding the properties of the rational numbers within a self-similar set is not knowing if any cancellation occurs between the entries in the vector $\pp_{x}$ and the $\prod_{j=1}^{l}q_{a_j}\cdot (\prod_{j=l+1}^{n}q_{a_j}-1)$ term. It is possible that these two terms contain many common factors and as such $x$ could be written in a significantly reduced form. This makes studying intrinsic Diophantine Approximation for self-similar sets more challenging. For ease of exposition we split what remains of this section into two cases, when $\Phi$ is equicontractive and the general case. Our applications are much simpler to state in the equicontractive case.

\subsubsection{The equicontractive case}
In this section we assume that $\Phi$ is equicontractive, i.e. there exists $q_{\Phi}\in \mathbb{Z}$ such that $q_{a}=q_{\Phi}$ for all $a\in A$. By the above we know that if $\pp/q\in X$ then there exists $\pp'
\in \mathbb{Z}^d,$ $n\in\mathbb{N}$, and $0\leq l\leq n-1$ such that $$\pp/q=\frac{\pp'}{q_{\Phi}^{l}(q_{\Phi}^{n-l}-1)}.$$ We define the intrinsic denominator of $\pp/q\in X$ to be
\begin{align*}
q_{int}(\pp/q):=\inf\Big\{q_{\Phi}^{l}(q_{\Phi}^{|\a|-l}-1):&\,\a\in \cup_{n=1}^{\infty}\A^{n}, 0\leq l\leq |\a|-1
\textrm{ satisfying }\\
&\pp/q=\pi(a_{1}\ldots a_{l}(a_{l+1}\ldots a_{|\a|})^{\infty})\Big\}
\end{align*} 
This is a generalisation of the notion of intrinsic denominator defined in \cite{FS} for $\Phi$ satisfying the strong separation condition. Given a function $\Psi:\mathbb{N}\to[0,\infty)$ we may then define a limsup set as follows:
$$W_{\Phi}^{*}(\Psi):=\left\{x\in X:\|x-\pp/q\|<\Psi(q_{int}(\pp/q))\textrm{ for i.m. }\pp/q\in X\right\}.$$ Fishman and Simmons in \cite{FS} proved a version of Khintchine's theorem for limsup sets of the form $W_{\Phi}^{*}(\Psi)$ when the underlying $\Phi$ is equicontractive, acting on $\mathbb{R},$ and satisfies the strong separation condition. Importantly this result did not provide a complete metric description for the sets $W_{\Phi}^{*}(\Psi),$ even in the restricted case of equicontractive IFSs acting on $\mathbb{R}$ which satisfy the strong separation condition. The divergence condition they needed for a full measure statement was not optimal. This issue was addressed in a recent paper by Tan, Wang, and Wu \cite{TWW} who established a complete analogue of Khintchine's theorem for the set $W_{\Phi}^{*}(\Psi)$ for the IFS $\{\phi_{1}(x)=\frac{x}{3}, \phi_{2}(x)=\frac{x+2}{3}\}.$ Note that this IFS has the middle third Cantor set as its self-similar set. 

\begin{thm}{\cite[Theorem 1.4]{TWW}}
	\label{TWW theorem}
Let $\Phi=\{\phi_{1}(x)=\frac{x}{3}, \phi_{2}(x)=\frac{x+2}{3}\}$ and $\Psi:\mathbb{N}\to [0,\infty)$ be a non-increasing function. Then
\[ \mu(W_{\Phi}^{*}(\Psi)) = \left\{ \begin{array}{ll}
0 & \mbox{if $\sum_{n=1}^{\infty}n\cdot 2^{n}\cdot  \Psi(3^n)^{\frac{\log 2}{\log 3}}<\infty$};\\
1 & \mbox{if $\sum_{n=1}^{\infty}n\cdot 2^{n}\cdot  \Psi(3^n)^{\frac{\log 2}{\log 3}}=\infty$}.\end{array} \right. \] 
\end{thm}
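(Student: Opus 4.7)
The plan is to deduce Theorem \ref{TWW theorem} from Theorem \ref{Main theorem} by translating between the intrinsic height $q_{int}$ and the representation length used in the definition of $W_{\Phi}(\Psi)$. For the middle-third IFS we have $\dim_{S}(\Phi)=s:=\log 2/\log 3$, $Diam(X_{\a})\asymp 3^{-|\a|}$, $\#\A^{n}=2^{n}$, and the strong separation condition holds; consequently $\mu$ is equivalent, up to a multiplicative constant, to $\mathcal{H}^{s}$ restricted to $C$, so $\mu$-null and $\mathcal{H}^{s}$-null coincide on $C$.

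The key step is a dictionary between the two height functions. For any rational $\pp/q\in C$ represented as $\pi(a_{1}\cdots a_{l}(a_{l+1}\cdots a_{n})^{\infty})$ with $\a\in\A^{n}$ and $0\le l\le n-1$, the denominator $3^{l}(3^{n-l}-1)$ lies between $\tfrac{2}{3}\cdot 3^{n}$ and $3^{n}$, so taking the infimum over representations yields $q_{int}(\pp/q)\asymp 3^{n_{*}}$, where $n_{*}$ is the minimal representation length of $\pp/q$. Define
$$\Psi'(\a):=\Psi(3^{|\a|}) \quad \text{and} \quad \tilde{\Psi}(\a):=\Psi(3^{|\a|-1}),$$
both of which are of the form $Diam(X_{\a})g(|\a|)$ for a suitable $g:\mathbb{N}\to[0,\infty)$. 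Since $\Psi$ is non-increasing and $q_{int}(\pp/q)\le 3^{n}$ for every representation of length $n$, every ball appearing in the definition of $W_{\Phi}(\Psi')$ is contained in the corresponding ball of $W_{\Phi}^{*}(\Psi)$, giving $W_{\Phi}(\Psi')\subseteq W_{\Phi}^{*}(\Psi)$. Conversely, any rational witnessing membership in $W_{\Phi}^{*}(\Psi)$ arises from a minimal representation $\a\in\A^{n_{*}}$, and $\Psi(q_{int}(\pp/q))\le\Psi(\tfrac{2}{3}\cdot 3^{n_{*}})\le\Psi(3^{n_{*}-1})=\tilde{\Psi}(\a)$, giving $W_{\Phi}^{*}(\Psi)\subseteq W_{\Phi}(\tilde{\Psi})$.

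For the divergent case, applying Statement~3 of Theorem \ref{Main theorem} (equicontractive, no monotonicity needed on $g$) to $\Psi'$ gives $\mu(W_{\Phi}(\Psi'))=1$ whenever $\sum_{n}n\cdot 2^{n}\cdot\Psi(3^{n})^{s}=\infty$, and the first inclusion above yields $\mu(W_{\Phi}^{*}(\Psi))=1$. For the convergent case, a routine index shift shows that $\sum_{n}n\cdot 2^{n}\cdot\Psi(3^{n})^{s}<\infty$ implies $\sum_{n}n\cdot 2^{n}\cdot\Psi(3^{n-1})^{s}<\infty$, so Statement~1 of Theorem \ref{Main theorem} applied to $\tilde{\Psi}$ at exponent $s$ gives $\mathcal{H}^{s}(W_{\Phi}(\tilde{\Psi}))=0$; the second inclusion together with the equivalence $\mu\asymp\mathcal{H}^{s}|_{C}$ then yields $\mu(W_{\Phi}^{*}(\Psi))=0$.

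The main obstacle is the bookkeeping in the convergent half: for each rational contributing to $W_{\Phi}^{*}(\Psi)$ one must pick a single minimal representation whose length $n_{*}$ controls $q_{int}(\pp/q)$ to within an absolute multiplicative constant, and then absorb this constant using monotonicity of $\Psi$ and the index shift above. Once this dictionary is in place, the theorem reduces mechanically to Theorem \ref{Main theorem}, which carries all of the hard analytical content.
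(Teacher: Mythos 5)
Your proposal is correct and follows essentially the same route the paper takes in proving Theorem~\ref{EquiTheorem}, of which Theorem~\ref{TWW theorem} is the special case $q_\Phi=3$, $\#\A=2$: the same two-sided inequality $q_\Phi^{n-1}\le q_\Phi^{l}(q_\Phi^{n-l}-1)\le q_\Phi^{n}$ is used to build the same two auxiliary functions (your $\Psi'$ for divergence and $\tilde\Psi$ for convergence match the paper's choices), giving the sandwich $W_{\Phi}(\Psi')\subseteq W_{\Phi}^{*}(\Psi)\subseteq W_{\Phi}(\tilde\Psi)$ and then reducing to Statements~1 and~3 of Theorem~\ref{Main theorem}. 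The only cosmetic difference is that you phrase the convergence bound through a ``minimal representation length'' $n_*$ and then pass from $\mathcal{H}^{s}$ to $\mu$ via $\mu\ll\mathcal{H}^{s}|_{C}$, whereas the paper states its convergence conclusion directly for $\mathcal{H}^{s}$; the content is identical.
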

The proof given in \cite{TWW} can be generalised to prove an analogue of Theorem \ref{TWW theorem} for any equicontractive IFS acting on $\mathbb{R}$ satisfying the strong separation condition. Our main result in this direction is the following statement. It generalises Theorem \ref{TWW theorem} to arbitrary dimensions and requires no separation conditions for $\Phi$.
\begin{thm}
	\label{EquiTheorem}
	Let $\Phi$ be an equicontractive IFS of the form $\Phi=\{\phi_{a}(x)=\frac{x+\pp_{a}}{q_{\Phi}}\}.$ Let $\Psi:\mathbb{N}\to [0,\infty)$ be a non-increasing function. Then the following statements are true:
	\begin{enumerate}
		\item For any $s\geq 0$, suppose that $\sum_{n=1}^{\infty}n\cdot \#\A^{n}\cdot  \Psi(q_{\Phi}^n)^{s}<\infty.$ Then $\mathcal{H}^{s}(W_{\Phi}^*(\Psi))=0.$
		\item If $\sum_{n=1}^{\infty}n\cdot \#\A^{n}\cdot  \Psi(q_{\Phi}^n)^{\frac{\log \#\A}{\log q_{\Phi}}}=\infty$ then $\mu(W_{\Phi}^*(\Psi))=1.$
	\end{enumerate}
\end{thm}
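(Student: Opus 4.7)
The plan is to deduce Theorem \ref{EquiTheorem} from Theorem \ref{Main theorem} by sandwiching the intrinsic limsup set $W_\Phi^*(\Psi)$ between two limsup sets of the form $W_\Phi(\tilde\Psi)$ with $\tilde\Psi(\a) = Diam(X_\a)\, g(|\a|)$, so that the parts of Theorem \ref{Main theorem} can be applied directly. The key elementary observation is that whenever $\pp/q = \pi(a_1\cdots a_l(a_{l+1}\cdots a_n)^\infty)$ with $0 \leq l \leq n-1$, the corresponding denominator is $q_\Phi^l(q_\Phi^{n-l}-1) = q_\Phi^n - q_\Phi^l$, which satisfies
\[
q_\Phi^{n-1}(q_\Phi-1) \;\leq\; q_\Phi^n - q_\Phi^l \;\leq\; q_\Phi^n.
\]
Passing to a shortest preperiodic representation of $\pp/q$, of length $n^*$ say, this yields the two-sided comparison $q_{int}(\pp/q) \asymp q_\Phi^{n^*}$.

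For the convergence direction I would set $\tilde\Psi(\a) := \Psi(q_\Phi^{|\a|-1})$. If $x \in W_\Phi^*(\Psi)$ is witnessed by $\pp/q \in X$ with a shortest representation $(\a^*, l^*)$ of length $n^*$, then the lower bound above together with monotonicity of $\Psi$ gives
\[
\|x - \pp/q\| \;<\; \Psi(q_{int}(\pp/q)) \;\leq\; \Psi(q_\Phi^{n^*-1}) \;=\; \tilde\Psi(\a^*),
\]
so that $W_\Phi^*(\Psi) \subseteq W_\Phi(\tilde\Psi)$. A routine index shift shows that $\sum_n n\, \#\A^n\, \Psi(q_\Phi^n)^s < \infty$ is equivalent to $\sum_n \sum_{\a \in \A^n} n\, \tilde\Psi(\a)^s < \infty$, so Statement~1 of Theorem \ref{Main theorem} applied to $\tilde\Psi$ gives $\mathcal{H}^s(W_\Phi^*(\Psi)) = 0$.

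For the divergence direction I would instead set $\tilde\Psi(\a) := \Psi(q_\Phi^{|\a|})$. For every $\a \in \A^n$ and $0 \leq l \leq n-1$, the point $y := \pi(a_1\cdots a_l(a_{l+1}\cdots a_n)^\infty)$ lies in $X \cap \mathbb{Q}^d$ with $q_{int}(y) \leq q_\Phi^n$, whence by monotonicity $\Psi(q_{int}(y)) \geq \Psi(q_\Phi^n) = \tilde\Psi(\a)$. This gives $W_\Phi(\tilde\Psi) \subseteq W_\Phi^*(\Psi)$ away from the countable set $X \cap \mathbb{Q}^d$: if $\Psi(q_\Phi^n) \to 0$ (the only nontrivial case, since otherwise density of preperiodic points in $X$ forces $W_\Phi^*(\Psi) = X$), then a point approximated by only finitely many distinct rationals must itself be one of them. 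The divergence hypothesis of Theorem \ref{EquiTheorem} is exactly the divergence hypothesis of Statement~3 of Theorem \ref{Main theorem} applied to $\tilde\Psi$, using $\dim_S(\Phi) = \log\#\A / \log q_\Phi$. Since $\Phi$ is equicontractive, Statement~3 imposes no monotonicity hypothesis on $g$ and no entropy constraint, and gives $\mu(W_\Phi(\tilde\Psi)) = 1$; non-atomicity of $\mu$ then yields $\mu(W_\Phi^*(\Psi)) = 1$.

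The principal difficulty is conceptual rather than technical: in the overlapping case a single rational in $X$ can admit preperiodic representations of many different lengths, and $q_{int}$ is defined as the infimum over these; it is precisely the two-sided comparison $q_{int}(\pp/q) \asymp q_\Phi^{n^*}$, combined with the monotonicity of $\Psi$, that decouples this issue and reduces both directions cleanly to Theorem \ref{Main theorem}. The subsidiary point about distinct witnesses in the divergence direction is handled, as indicated, by the non-atomicity of $\mu$.
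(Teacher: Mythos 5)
Your argument is essentially the same as the paper's: in both directions you sandwich $W_\Phi^*(\Psi)$ by sets of the form $W_\Phi(\tilde\Psi)$ using $\tilde\Psi(\a)=\Psi(q_\Phi^{|\a|-1})$ for convergence and $\tilde\Psi(\a)=\Psi(q_\Phi^{|\a|})$ for divergence (the paper phrases the latter via $g(n)=\Psi(q_\Phi^n)/(Diam(X)q_\Phi^{-n})$, which for an equicontractive IFS gives exactly $Diam(X_\a)g(|\a|)=\Psi(q_\Phi^{|\a|})$), both relying on the two-sided bound $q_\Phi^{n-1}\leq q_\Phi^l(q_\Phi^{n-l}-1)\leq q_\Phi^n$ and the monotonicity of $\Psi$, and then invoking Statements~1 and~3 of Theorem \ref{Main theorem}. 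The one place you are slightly more careful than the paper is in the divergence direction: the paper asserts the inclusion $W_\Phi(\tilde\Psi)\subseteq W_\Phi^*(\Psi)$ outright, glossing over the possibility that infinitely many pairs $(\a,l)$ might all project to the same rational; you correctly note that this only fails on $\mathbb{Q}^d\cap X$ (when $\Psi(q_\Phi^n)\to 0$) or is vacuous (when it does not), so non-atomicity of $\mu$ closes the gap — a minor but legitimate tightening of the same argument.
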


\begin{figure}[h]
	
	\begin{subfigure}{0.5\textwidth}
		\includegraphics[width=0.9\linewidth, height=6cm]{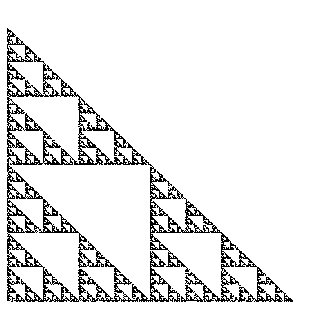} 
		\caption{The self-similar set for $\Phi_{1}.$}
		\label{fig:subim1}
	\end{subfigure}
	\begin{subfigure}{0.5\textwidth}
		\includegraphics[width=0.9\linewidth, height=6cm]{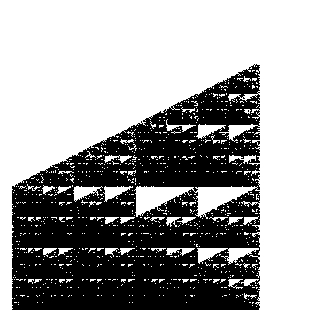}
		\caption{The self-similar set for $\Phi_{2}$.}
		\label{fig:subim2}
	\end{subfigure}
	
	\caption{Let $\Phi_{1}=\{\phi_{1}(x,y)=(\frac{x}{2},\frac{y}{2}),\,\phi_{2}(x,y)=(\frac{x+1}{2},\frac{y}{2}),\phi_{3}(x,y)=(\frac{x}{2},\frac{y+1}{2})\}$ and $\Phi_{2}=\{\phi_{1}(x,y)=(\frac{x}{2},\frac{y}{2})),\, \phi_{2}(x,y)=(\frac{x+1}{2},\frac{y}{2})),\, \phi_{3}(x,y)=(\frac{x}{2},\frac{y+1}{2})),\, \phi_{4}(x,y)=(\frac{x+2}{2},\frac{y+2}{2})),\, \phi_{5}(x,y)=(\frac{x+2}{2},\frac{y}{2}))\}.$ $\Phi_{1}$ satisfies the open set condition and $\Phi_{2}$ does not. Theorem \ref{Main theorem} and Theorem \ref{EquiTheorem} apply to both of these iterated function systems.}
	\label{fig:image2}
\end{figure}

\begin{proof}
We begin by remarking that for any word $\a$ and $0\leq l\leq |\a|-1$ we have \begin{equation}
\label{comparable}
q_{\Phi}^{|\a|-1}\leq q_{\Phi}^{l}(q_{\Phi}^{|\a|-l}-1)\leq q_{\Phi}^{|\a|}.
\end{equation} 

\noindent \textbf{Proof of the convergence case.} To prove the convergence case let $\Psi':\cup_{n=1}^{\infty}\A^{n}\to [0,\infty)$ be given by $$\Psi'(\a)=\Psi(q_{\Phi}^{|\a|-1}).$$ Now notice that for any $\pp/q\in X$, there must exist $\a$ and $0\leq l\leq |\a|-1$ such that 
$$B(\pp/q,\Psi(q_{int}(\pp/q)))=B(\pi(a_{1}\ldots a_{l}(a_{l+1}\ldots a_{|\a|})^{\infty}), \Psi(q_{\Phi}^{l}(q_{\Phi}^{|\a|}-1)).$$ Therefore, using the fact $\Psi$ is non-increasing together with \eqref{comparable} we have 
\begin{align*}
B(\pp/q,\Psi(q_{int}(\pp/q)))&\subseteq B(\pi(a_{1}\ldots a_{l}(a_{l+1}\ldots a_{|\a|})^{\infty}), \Psi(q_{\Phi}^{|\a|-1})\\
&=B(\pi(a_{1}\ldots a_{l}(a_{l+1}\ldots a_{|\a|})^{\infty}), \Psi'(\a)).
\end{align*} Therefore $W_{\Phi}^{*}(\Psi)\subseteq W_{\Phi}(\Psi').$ By Theorem \ref{Main theorem} it will follow that $\mathcal{H}^{s}(W_{\Phi}^{*}(\Psi))=0$ if $\sum_{n=1}^{\infty} \sum_{\a\in \A^n}n\cdot \Psi'(\a)^{s}<\infty.$ This latter inequality is equivalent to $\sum_{n=1}^{\infty}n\cdot \#\A^{n}\cdot \Psi(q_{\Phi}^{n-1})^{s}<\infty.$ However, this inequality is implied by our assumption $\sum_{n=1}^{\infty}n\cdot \#\A^{n}\cdot  \Psi(q_{\Phi}^n)^{s}<\infty.$\\

\noindent \textbf{Proof of the divergence case.} Let $g:\mathbb{N}\to [0,\infty)$ be given by $$g(n)=\frac{\Psi(q_{\Phi}^n)}{Diam(X)\cdot q_{\Phi}^{-n}}.$$ Now define $\Psi':\cup_{n=1}^{\infty}\A^{n}\to [0,\infty)$ by $\Psi'(\a)=Diam(X_{\a})\cdot g(|\a|)$. For any $\a\in \A^n$ and $0\leq l\leq |\a|-1$ there exists $\pp/q\in X$ such that 
$$B(\pi(a_{1}\ldots a_{l}(a_{l+1}\ldots a_{n})^{\infty}),\Psi'(\a))=B(\pp/q,\Psi(q_{\Phi}^{n})).$$ By the non-increasing assumption on $\Psi$, the definition of intrinsic denominator, and \eqref{comparable}, it follows that $$B(\pi(a_{1}\ldots a_{l}(a_{l+1}\ldots a_{n})^{\infty}),\Psi'(\a))\subseteq B(\pp/q,\Psi(q_{int}(\pp/q))).$$ Therefore $W_{\Phi}(\Psi')\subseteq W_{\Phi}^{*}(\Psi)$. By Theorem \ref{Main theorem} it will follow that $\mu(W_{\Phi}^{*}(\Psi))=1$ if we can show that $$\sum_{n=1}^{\infty} \sum_{\a\in \A^n}n\cdot (Diam(X_{\a})g(|\a|))^{\frac{\log \#\A}{\log q_{\Phi}}}=\infty.$$ By the definition of $g$ this is equivalent to our assumption $\sum_{n=1}^{\infty}n\cdot \#\A^{n}\cdot  \Psi(q_{\Phi}^n)^{\frac{\log \#\A}{\log q_{\Phi}}}=\infty.$ Therefore our result follows. 
\end{proof}
	
	
	\label{fig:image2}

If a rational vector $\pp/q\in X$ is in its reduced form then we must have $q\leq q_{int}(\pp/q)$. This observation together with Theorem \ref{EquiTheorem} implies the following statement for traditional rational approximations, where the neighbourhood is defined in terms of the denominator of $\pp/q$ rather than $q_{int}(\pp/q)$.

\begin{cor}
Let $\Phi$ be an equicontractive IFS of the form $\Phi=\{\phi_{\a}(x)=\frac{x+\pp_{a}}{q_{\Phi}}\}.$ Let $\Psi:\mathbb{N}\to [0,\infty)$ be a non-increasing function. If $\sum_{n=1}^{\infty}n\cdot \#\A^{n}\cdot  \Psi(q_{\Phi}^n)^{\frac{\log \#\A}{\log q_{\Phi}}}=\infty$ then 
$$\mu\left(\{x\in X:\|x-\pp/q\|<\Psi(q)\textrm{ for i.m. }\pp/q\in X\}\right)=1.$$
\end{cor}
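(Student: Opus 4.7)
The plan is to deduce this corollary directly from Theorem \ref{EquiTheorem} by showing that $W_{\Phi}^{*}(\Psi)$ is contained in the set appearing in the corollary. The key observation, already flagged in the paragraph preceding the statement, is that for any $\pp/q\in X\cap\mathbb{Q}^d$ written in reduced form, one has $q\leq q_{int}(\pp/q)$. This is because any representation $\pp/q=\pi(a_1\ldots a_l(a_{l+1}\ldots a_{|\a|})^{\infty})$ yields an expression of $\pp/q$ as a fraction with denominator $q_{\Phi}^{l}(q_{\Phi}^{|\a|-l}-1)$, and the reduced denominator must divide (hence be at most) any such denominator.

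First I would fix $x\in W_{\Phi}^{*}(\Psi)$ and let $\{\pp_i/q_i\}_{i\in\mathbb{N}}$ be a sequence of pairwise distinct rationals in $X$, written in reduced form, such that $\|x-\pp_i/q_i\|<\Psi(q_{int}(\pp_i/q_i))$ for each $i$. Since $q_i\leq q_{int}(\pp_i/q_i)$ and $\Psi$ is non-increasing, we have $\Psi(q_{int}(\pp_i/q_i))\leq \Psi(q_i)$, and hence $\|x-\pp_i/q_i\|<\Psi(q_i)$ holds for infinitely many distinct $\pp_i/q_i\in X$. Therefore
\[
W_{\Phi}^{*}(\Psi)\subseteq \{x\in X:\|x-\pp/q\|<\Psi(q)\textrm{ for i.m. }\pp/q\in X\}.
\]

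Finally, by Statement 2 of Theorem \ref{EquiTheorem}, the divergence hypothesis $\sum_{n=1}^{\infty}n\cdot\#\A^{n}\cdot\Psi(q_{\Phi}^n)^{\log\#\A/\log q_{\Phi}}=\infty$ implies $\mu(W_{\Phi}^{*}(\Psi))=1$. Monotonicity of $\mu$ applied to the above inclusion then gives the desired full-measure conclusion. There is no serious obstacle here: the whole argument is a one-line deduction once the inequality $q\leq q_{int}(\pp/q)$ for reduced fractions is in hand, and that inequality is a routine consequence of the definitions and the fact that the reduced denominator of a rational divides every denominator in any representation of that rational.
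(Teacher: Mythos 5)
Your proof is correct and follows exactly the approach the paper itself indicates in the paragraph preceding the corollary: the inclusion $W_{\Phi}^{*}(\Psi)\subseteq \{x\in X:\|x-\pp/q\|<\Psi(q)\textrm{ for i.m. }\pp/q\in X\}$ via $q\leq q_{int}(\pp/q)$ and monotonicity of $\Psi$, followed by an application of Theorem \ref{EquiTheorem}.
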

\subsubsection{The general case}
In this section we no longer assume that $\Phi$ is equicontractive. We formulate two statements, one when $\Phi$ is potentially overlapping, and one when $\Phi$ satisfies the strong separation condition. 

Given $g:\mathbb{N}\to [0,\infty)$ and $\pp/q\in X$ we define
\begin{align*}
\Psi_{g}(\pp/q):=\max\Big\{ Diam(X_{\a})\cdot g(|\a|):&\, \a\in\cup_{n=1}^{\infty} \A^n, 0\leq l\leq |\a|-1 \textrm{ satisfying}\\
&\pp/q=\pi(a_{1}\ldots a_{l}(a_{l+1}\ldots a_{|\a|})^{\infty})
\Big\}.
\end{align*} 
To each $g:\mathbb{N}\to [0,\infty)$ we associate the set
$$W_{\Phi}^{**}(g)=\{x\in X:\|x-\pp/q\|<\Psi_{g}(\pp/q)\textrm{ for i.m. }\pp/q\in X\}.$$
The following statement is essentially Theorem \ref{Main theorem} rephrased in terms of rational approximations.
\begin{thm}
Let $\Phi$ be an IFS of the form $\Phi=\{\phi_{\a}(x)=\frac{x+\pp_{a}}{q_{a}}\}$ and $g:\mathbb{N}\to [0,\infty)$. Then the following statement are true
\begin{enumerate}
	\item For any $s\geq 0$, suppose that $\sum_{n=1}^{\infty}\sum_{\a \in \A^{n}}n\cdot (Diam(X_{\a})g(n))^{s}<\infty.$ Then $\mathcal{H}^{s}(W_{\Phi}^{**}(g))=0.$
	\item If $g$ is non-increasing, $h_{\p}<-2\log \sum_{a\in \A}p_{a}^{2},$ and $\sum_{n=1}^{\infty}\sum_{\a \in \A^{n}}n\cdot (Diam(X_{\a})g(n))^{\dim_{S}(\Phi)}=\infty$ then $\mu(W_{\Phi}^{**}(g))=1.$
\end{enumerate}
\end{thm}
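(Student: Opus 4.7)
My plan is to deduce both statements from Theorem \ref{Main theorem} by introducing the intermediary function $\Psi:\cup_{n=1}^{\infty}\A^{n}\to[0,\infty)$ given by $\Psi(\a):=Diam(X_{\a})g(|\a|)$. With this choice the volume sums appearing in the present theorem coincide exactly with those in Theorem \ref{Main theorem}, and in the divergence case $\Psi$ automatically has the form needed to invoke Statement 2 of that theorem. The remaining work is to compare $W_{\Phi}(\Psi)$ with $W_{\Phi}^{**}(g)$, in the spirit of the proof of Theorem \ref{EquiTheorem}, the additional subtlety being that $\Psi_{g}(\pp/q)$ is defined by a maximum over all eventually periodic representations of the single rational $\pp/q$.

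For the convergence statement, I would establish the inclusion $W_{\Phi}^{**}(g)\subseteq W_{\Phi}(\Psi)$. Given $x\in W_{\Phi}^{**}(g)$, for each rational $\pp/q\in X$ in the approximating sequence I pick a representation $(\a,l)$ that attains the maximum in $\Psi_{g}(\pp/q)$, so that $\|x-\pi(a_{1}\ldots a_{l}(a_{l+1}\ldots a_{|\a|})^{\infty})\|<\Psi_{g}(\pp/q)=\Psi(\a)$. Distinct rationals produce distinct pairs $(\a,l)$, and since for each $n$ only finitely many such pairs have $|\a|\le n$, the word lengths arising must be unbounded; hence $x\in W_{\Phi}(\Psi)$. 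Statement 1 of Theorem \ref{Main theorem} then immediately gives $\mathcal{H}^{s}(W_{\Phi}^{**}(g))\le\mathcal{H}^{s}(W_{\Phi}(\Psi))=0$.

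For the divergence statement, I would instead establish $W_{\Phi}(\Psi)\setminus(X\cap\mathbb{Q}^{d})\subseteq W_{\Phi}^{**}(g)$. If $x\in W_{\Phi}(\Psi)$ is irrational, then for arbitrarily large $n$ there exist $\a\in\A^{n}$ and $0\le l\le n-1$ with $\|x-\pi(a_{1}\ldots a_{l}(a_{l+1}\ldots a_{n})^{\infty})\|<\Psi(\a)$; the centre $\pp/q$ belongs to $X\cap\mathbb{Q}^{d}$, and the maximal definition of $\Psi_{g}$ gives $\Psi_{g}(\pp/q)\ge\Psi(\a)$. The key point is that only finitely many such rationals cannot occur: writing $r_{\max}:=\max_{a\in\A}r_{a}<1$, monotonicity of $g$ gives $g(n)\le g(1)$ while $Diam(X_{\a})\le r_{\max}^{n}Diam(X)\to 0$, so any fixed $\pp_{0}/q_{0}$ approximated to within $Diam(X_{\a})g(n)$ at arbitrarily large $n$ would have to equal $x$, contradicting irrationality. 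Combining this inclusion with Statement 2 of Theorem \ref{Main theorem}, which yields $\mu(W_{\Phi}(\Psi))=1$, and with the fact that $X\cap\mathbb{Q}^{d}$ is a countable set of $\mu$-measure zero because the self-similar measure $\mu$ on the uncountable attractor $X$ is non-atomic, delivers $\mu(W_{\Phi}^{**}(g))=1$.

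The main obstacle, in both directions, is the correct bookkeeping of the multiple word representations a single rational may possess; otherwise the ``infinitely many'' clause on one side of the inclusion could collapse on the other side. This is controlled in the convergence direction by the finiteness of $\A^{n}$ for each fixed $n$, and in the divergence direction by the uniform decay of $Diam(X_{\a})$ together with the excision of the countable rational set.
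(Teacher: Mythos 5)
Your proof is correct and follows essentially the same route as the paper, which reduces the statement to Theorem \ref{Main theorem} via the observation that for $x\notin\mathbb{Q}^{d}$ one has $x\in W_{\Phi}^{**}(g)$ if and only if $x\in W_{\Phi}(\Psi)$ with $\Psi(\a)=Diam(X_{\a})g(|\a|)$; the paper records this as a one-line remark, while you spell out both inclusions (the maximizing-representation bookkeeping in the convergence direction, and the finitely-many-centres exclusion together with $\mu$-nullity of the countable set $X\cap\mathbb{Q}^{d}$ in the divergence direction), which is exactly the content left implicit there.
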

\begin{proof}
This result follows from Theorem \ref{Main theorem} together with the observation that if $x\notin \mathbb{Q}^d$ then $x\in W_{\Phi}^{**}(\Psi_{g})$ if and only if $x\in W_{\Phi}(\Psi)$ for $\Psi(\a)=Diam(X_{\a})g(|\a|).$
\end{proof}
For what remains of this section we will always assume that $\Phi$ satisfies the strong separation condition. Because of the strong separation condition, for any $\pp/q\in X$ there exists a unique sequence $(a_n)\in \A^{\mathbb{N}}$ satisfying $\pi((a_n))=\pp/q$. We emphasise that $(a_n)$ must be eventually periodic. Given $\pp/q\in X$ we define the intrinsic denominator of $\pp/q$ to be $$q_{int}(\pp/q):=\inf\left\{\prod_{j=1}^{l}q_{a_j}\cdot (\prod_{j=l+1}^{n}q_{a_j}-1): n\in\mathbb{N}, 0\leq l\leq n-1, \textrm{ and } \pp/q=\pi(a_{1}\ldots a_{l}(a_{l+1}\ldots a_{n})^{\infty})\right\}.$$ For any $\pp/q\in X$ we define $$n(\pp/q):=\inf\{n\in \mathbb{N}:\pp/q=\pi(a_{1}\ldots a_{l}(a_{l+1}\ldots a_{n})^{\infty})\textrm{ for some }0\leq l\leq n-1\}$$ Similarly, we let $$l(\pp/q):=\inf\{0\leq l\leq n(\pp/q)-1:\pp/q=\pi(a_{1}\ldots a_{l}(a_{l+1}\ldots a_{n_{p/q}})^{\infty})\}.$$ It can be shown that $n(\pp/q)$ and $l(\pp/q)$ are the unique parameters satisfying
$$q_{int}(\pp/q)=\prod_{j=1}^{l(\pp/q)}q_{a_j}\cdot (\prod_{j=l(\pp/q)+1}^{n(\pp/q)}q_{a_j}-1)$$ and $$\pp/q=\pi(a_{1}\ldots a_{l(\pp/q)}(a_{l(\pp/q)+1}\ldots a_{n(\pp/q)})^{\infty}).$$ Given $g:\mathbb{N}\to [0,\infty)$ we define a limsup set as follows, let $$W_{\Phi}^{***}(g):=\left\{x\in X:\|x-\pp/q\|<\frac{g(n(\pp/q))}{q_{int}(\pp/q)}\textrm{ for i.m. }\pp/q\in X\right\}.$$


\begin{figure}
	\begin{center}
	\includegraphics[width=250pt]{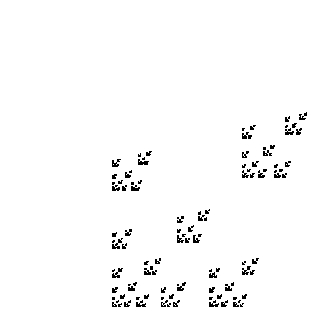}
	\end{center}
	\caption{Let $\Phi=\{\phi_{1}(x,y)=(\frac{x}{2},\frac{y}{2}),\, \phi_{2}(x,y)=(\frac{x+2}{4},\frac{y}{4}),\, \phi_{3}(x,y)=(\frac{x}{5},\frac{y+3}{5}),\, \phi_{4}(x,y)=(\frac{x+2}{3},\frac{y+2}{3})\}.$ The self-similar set for this IFS is displayed above. One can check that this IFS satisfies the strong separation condition and $h_{\p}<-2\log \sum_{a\in \A}p_{a}^2$. Therefore Theorem \ref{Main theorem} and Theorem \ref{General theorem} apply to this IFS.}
	\label{Fig1}
\end{figure}



\begin{thm}
	\label{General theorem}
	Let $\Phi$ be an IFS of the form $\Phi=\{\phi_{a}(x)=\frac{x+\pp_a}{q_a}\}$ satisfying the strong separation condition and let $g:\mathbb{N}\to [0,\infty)$. Then the following statement are true:
	\begin{enumerate}
		\item For any $s\geq 0$, suppose that $\sum_{n=1}^{\infty}\sum_{\a\in \A^{n}}n\cdot (Diam(X_{\a})g(n))^{s}<\infty.$ Then $\mathcal{H}^{s}(W_{\Phi}^{***}(g))=0$.
		\item If $g$ is non-increasing, $h_{\p}<-2\log \sum_{a\in \A}p_{a}^{2},$ and $\sum_{n=1}^{\infty}\sum_{\a\in \A^{n}}n\cdot (Diam(X_{\a})g(n))^{\dim_{S}(\Phi)}=\infty,$ then $\mu(W_{\Phi}^{***}(g))=1.$
	\end{enumerate}  
\end{thm}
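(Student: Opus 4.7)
The plan is to reduce Theorem \ref{General theorem} to Theorem \ref{Main theorem} by identifying the radius $g(n(\pp/q))/q_{int}(\pp/q)$ appearing in $W_{\Phi}^{***}(g)$ with something of the form $Diam(X_{\a})g(|\a|)$, up to a bounded multiplicative constant. The key geometric observation is that for an IFS of the form $\Phi=\{\phi_a(x)=(x+\pp_a)/q_a\}$ one has $Diam(X_{\a})=Diam(X)\cdot\prod_{j=1}^{|\a|} q_{a_j}^{-1}$, while for any $\a$ of length $n$ and $0\le l\le n-1$, the bound $q_a\ge 2$ gives
$$\tfrac{1}{2}\prod_{j=1}^{n}q_{a_j}\;\le\;\prod_{j=1}^{l}q_{a_j}\bigl(\prod_{j=l+1}^{n}q_{a_j}-1\bigr)\;\le\;\prod_{j=1}^{n}q_{a_j}.$$
Combined with the uniqueness of codes under SSC, this yields $q_{int}(\pp/q)\asymp 1/Diam(X_{a_1\ldots a_{n(\pp/q)}})$, where $(a_n)$ is the unique code of $\pp/q$. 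Call the implicit constant $C$.

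For the convergence case, I would define $\Psi(\a):=C\cdot Diam(X_{\a})g(|\a|)$. Using the comparison above together with the fact that the minimal representation of $\pp/q$ gives a valid choice of $(\a,l)$ in the definition of $W_{\Phi}(\Psi)$, one checks
$$B\Bigl(\pp/q,\frac{g(n(\pp/q))}{q_{int}(\pp/q)}\Bigr)\subseteq B\bigl(\pi(a_{1}\ldots a_{l(\pp/q)}(a_{l(\pp/q)+1}\ldots a_{n(\pp/q)})^\infty),\Psi(a_{1}\ldots a_{n(\pp/q)})\bigr),$$
and hence $W_{\Phi}^{***}(g)\subseteq W_{\Phi}(\Psi)$. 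The sum $\sum_n\sum_{\a\in\A^n}n\cdot\Psi(\a)^{s}$ differs from the hypothesised sum by the factor $C^{s}$, so Statement 1 of Theorem \ref{Main theorem} immediately delivers $\mathcal{H}^{s}(W_{\Phi}^{***}(g))=0$.

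For the divergence case, I would take $\Psi(\a):=Diam(X_{\a})g(|\a|)/C$. For any word $\a\in\A^{n}$ and $0\le l\le n-1$ appearing in the definition of $W_{\Phi}(\Psi)$, set $\pp/q:=\pi(a_{1}\ldots a_{l}(a_{l+1}\ldots a_{n})^\infty)$. Since $n(\pp/q)\le n$ and $g$ is non-increasing, $g(n(\pp/q))\ge g(n)$; combined with $q_{int}(\pp/q)\le \prod_{j=1}^{n}q_{a_j}\le C/Diam(X_{\a})$, this gives $\Psi(\a)\le g(n(\pp/q))/q_{int}(\pp/q)$. Hence every ball from $W_{\Phi}(\Psi)$ is contained in a ball from $W_{\Phi}^{***}(g)$ centred at the corresponding rational. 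For irrational $x\in W_{\Phi}(\Psi)$, the radii tend to zero, so the associated rationals are eventually distinct; this shows $W_{\Phi}(\Psi)\setminus(\mathbb{Q}^{d}\cap X)\subseteq W_{\Phi}^{***}(g)$. Under SSC the rationals in $X$ correspond bijectively to the countably many eventually periodic codes, so $\mu(\mathbb{Q}^{d}\cap X)=0$. Theorem \ref{Main theorem} Statement 2 applied to $\Psi$ (the divergence hypothesis is unchanged up to the factor $C^{-\dim_{S}(\Phi)}$, and the hypothesis $h_{\p}<-2\log\sum_{a}p_{a}^{2}$ transfers directly) gives $\mu(W_{\Phi}(\Psi))=1$, whence $\mu(W_{\Phi}^{***}(g))=1$.

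The main obstacle is the two-sided comparison $q_{int}(\pp/q)\asymp 1/Diam(X_{a_1\ldots a_{n(\pp/q)}})$; the upper bound is straightforward, while the lower bound relies crucially on $\prod_{j=l+1}^{n}q_{a_j}-1\ge \frac{1}{2}\prod_{j=l+1}^{n}q_{a_j}$, which is exactly where the assumption $|q_{a}|\ge 2$ is used. A secondary point requiring care is that, in the divergence direction, one must show that the potentially larger length $n$ (as opposed to the minimal $n(\pp/q)$) does not break the argument; this is precisely where the monotonicity of $g$ enters, mirroring its role in the proof of Theorem \ref{EquiTheorem}. Once these comparisons are in place the two directions become symmetric and Theorem \ref{Main theorem} does the real work.
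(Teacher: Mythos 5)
Your proof follows essentially the same route as the paper: both establish the two-sided comparison $q_{int}(\pp/q)\asymp 1/Diam(X_{a_1\ldots a_{n(\pp/q)}})$ (the paper's equation \eqref{Comparable2}), then define auxiliary functions $\Psi$ of the form $Diam(X_{\a})g(|\a|)$ scaled by a constant and deduce the ball inclusions $W_{\Phi}^{***}(g)\subseteq W_{\Phi}(\Psi)$ for convergence and $W_{\Phi}(\Psi)\subseteq W_{\Phi}^{***}(g)$ (off a null set) for divergence, invoking monotonicity of $g$ exactly where the paper does. Your extra step — explicitly discarding the countable set $\mathbb{Q}^{d}\cap X$ in the divergence direction — addresses a technicality the paper glosses over but which does not affect the measure-theoretic conclusion.
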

\begin{proof}
We begin our proof by remarking that there exists constant $C_{1},C_{2}>0$ such that for any word $\a\in \cup_{n=1}^{\infty}\A^{n}$ and $0\leq l\leq |\a|-1$ we have 
\begin{equation}
\label{Comparable2}
\frac{C_{1}}{\prod_{j=1}^{l}q_{a_j}\cdot (\prod_{j=l+1}^{|\a|}q_{a_j}-1)}< Diam(X_{\a})< \frac{C_{2}}{\prod_{j=1}^{l}q_{a_j}\cdot (\prod_{j=l+1}^{|\a|}q_{a_j}-1)}.
\end{equation}\\

\noindent \textbf{The convergence case.} It follows from \eqref{Comparable2} that for any $\pp/q\in X$ we have $$B\left(\frac{\pp}{q},\frac{g(n(\pp/q))}{q_{int}(\pp/q)}\right)\subseteq B\left(\pi(a_{1}\ldots a_{l(\pp/q)}(a_{l(\pp/q)+1}\ldots a_{n(\pp/q)})^{\infty}),\frac{g(n(\pp/q))Diam(X_{a_{1}\ldots a_{n(\pp/q)}})}{C_{1}}\right).$$ Therefore $W_{\Phi}^{***}(g)\subseteq W_{\Phi}(\Psi)$ for $\Psi(\a)=\frac{g(|\a|)Diam(X_{\a})}{C_{1}}.$ In which case $\mathcal{H}^{s}(W_{\Phi}^{***}(g))=0$ if $\mathcal{H}^{s}(W_{\Phi}(\Psi))=0.$ However this last inequality follows from Theorem \ref{Main theorem} and our assumption $\sum_{n=1}^{\infty}\sum_{\a\in \A^{n}}n\cdot (Diam(X_{\a})g(n))^{s}<\infty.$\\

\noindent \textbf{The divergence case.} 
For any $\a\in \cup_{n=1}^{\infty}\A^n$ and $0\leq l\leq |\a|-1$ there exists $\pp/q\in X$ such that $\pp/q=\pi(a_{1}\ldots a_{l}(a_{l+1}\ldots a_{|\a|})^{\infty}).$ Therefore
$$B\left(\pi(a_{1}\ldots a_{l}(a_{l+1}\ldots a_{|\a|})^{\infty}),\frac{g(|\a|)Diam(X_{\a})}{C_{2}}\right)=B\left(\pp/q,\frac{g(|\a|)Diam(X_{\a})}{C_{2}}\right).$$ It now follows from the fact $g$ is non-increasing together with \eqref{Comparable2} that 
\begin{align*}
&B\left(\pi(a_{1}\ldots a_{l}(a_{l+1}\ldots a_{|\a|})^{\infty}),\frac{g(|\a|)Diam(X_{\a})}{C_{2}}\right)\\
\subseteq &B\left(\pp/q,\frac{g(n(\pp/q))Diam(X_{a_{1}\ldots a_{n(\pp/q)}})}{C_{2}}\right)\\
\subseteq &B\left(\pp/q,\frac{g(n(\pp/q))}{q_{int}(\pp/q)}\right)
\end{align*}
Therefore $W_{\Phi}(\Psi)\subset W_{\Phi}^{***}(g)$ for $\Psi$ given by $\Psi(\a)=\frac{g(|\a|)Diam(X_{\a})}{C_{2}}$, and our result follows if $\mu(W_{\Phi}(\Psi))=1.$ However $\mu(W_{\Phi}(\Psi))=1$ follows from Theorem \ref{Main theorem} together with our assumption $\sum_{n=1}^{\infty}\sum_{\a\in \A^{n}}n\cdot (Diam(X_{\a})g(n))^{\dim_{S}(\Phi)}=\infty.$
\end{proof}
As remarked upon in the equicontractive case, if $\pp/q\in X$ is in its reduced from then we must have $q\leq q_{int}(\pp/q)$. This observation together with Theorem \ref{General theorem} implies the following corollary.
\begin{cor}
Let $\Phi$ be an IFS of the form $\Phi=\{\phi_{a}(x)=\frac{x+\p_a}{q_a}\}$ satisfying the strong separation condition and $h_{\p}<-2\log \sum_{a\in \A}p_{a}^{2}$. Let $g:\mathbb{N}\to [0,\infty)$ be a non-increasing function. If $\sum_{n=1}^{\infty}\sum_{\a\in \A^{n}}n\cdot (Diam(X_{\a})g(n))^{\dim_{S}(\Phi)}=\infty$ then 
$$\mu\left(\left\{x\in X:\|x-\pp/q\|<\frac{g(n(\pp/q))}{q}\textrm{ for i.m. }\pp/q\in X\right\}\right)=1.$$
\end{cor}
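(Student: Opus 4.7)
The plan is to obtain this corollary as a direct consequence of Theorem \ref{General theorem}(2). Under the stated hypotheses, that theorem already gives $\mu(W_{\Phi}^{***}(g))=1$, so it suffices to verify the set-theoretic inclusion
$$W_{\Phi}^{***}(g)\subseteq\left\{x\in X:\|x-\pp/q\|<\tfrac{g(n(\pp/q))}{q}\text{ for i.m. }\pp/q\in X\right\}.$$
The whole argument then reduces to comparing the radius $g(n(\pp/q))/q_{int}(\pp/q)$ appearing in $W_{\Phi}^{***}(g)$ with the weaker radius $g(n(\pp/q))/q$ featured in the corollary.

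For this comparison I would fix a rational point $\pp/q\in X$ written in lowest terms. Under the strong separation condition, $\pp/q$ has a unique symbolic coding $(a_{n})\in\A^{\mathbb{N}}$, and this coding is eventually periodic with the parameters $l(\pp/q)$ and $n(\pp/q)$ introduced above. Using \eqref{simplified form} and summing the resulting geometric tail produces an integer vector $\pp_{x}\in\mathbb{Z}^{d}$ with
$$\frac{\pp}{q}=\frac{\pp_{x}}{\prod_{j=1}^{l(\pp/q)}q_{a_j}\cdot\bigl(\prod_{j=l(\pp/q)+1}^{n(\pp/q)}q_{a_j}-1\bigr)}=\frac{\pp_{x}}{q_{int}(\pp/q)}.$$
Because $q$ is the reduced denominator of $\pp/q$, it divides $q_{int}(\pp/q)$, and in particular $q\leq q_{int}(\pp/q)$. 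Consequently
$$B\!\left(\tfrac{\pp}{q},\tfrac{g(n(\pp/q))}{q_{int}(\pp/q)}\right)\subseteq B\!\left(\tfrac{\pp}{q},\tfrac{g(n(\pp/q))}{q}\right),$$
so any $x$ lying in the left-hand ball for infinitely many $\pp/q\in X$ automatically lies in the right-hand ball for infinitely many $\pp/q\in X$. This proves the inclusion and hence the corollary.

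I do not anticipate any real obstacle: the entire argument is essentially the one-line passage from intrinsic denominators to ordinary reduced denominators, precisely mirroring the analogous step carried out in the equicontractive case just above, with Theorem \ref{General theorem} doing all the measure-theoretic work.
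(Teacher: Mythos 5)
Your proof is correct and matches the paper's own argument exactly: Theorem \ref{General theorem}(2) gives $\mu(W_{\Phi}^{***}(g))=1$, and the observation that the reduced denominator $q$ divides (hence is at most) $q_{int}(\pp/q)$ gives the inclusion $W_{\Phi}^{***}(g)\subseteq\{x\in X:\|x-\pp/q\|<g(n(\pp/q))/q\text{ for i.m. }\pp/q\in X\}$, which yields the corollary. No gaps.
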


\section{Proof of Theorem \ref{Main theorem}}
\label{Proof section}
In this section we prove Theorem \ref{Main theorem}. Statement $1$ of this theorem is proved by a standard covering argument and as such is omitted. We will only prove Statement $2$ of Theorem \ref{Main theorem} in full. Statement $3$ is proved via an almost identical method. Where appropriate we will indicate in the footnotes where the proofs differ and why Statement $3$ does not require the assumption $g$ is non-increasing. 
\subsection{Technical preliminaries}

Given a word $\a\in \cup_{n=1}^{\infty}\A^n$ we let $C_{t}(\a)$ denote the number of distinct words of length $t$ appearing within $\a$. Given a probability vector $\p$ and $n\in\mathbb{N}$ we let $$k_{n,\p}:=\left \lfloor\frac{-\log n}{\log \sum_{a\in \A}p_{a}^2}\right\rfloor +1.$$ We also let $$\mathcal{F}_{n,\p}:=\left\{\a\in \A^{n}:C_{k_{n,\p}}(\a)\geq \Big\lfloor \frac{n}{10}\Big\rfloor \right\}.$$ When the choice of $\p$ is implicit, we simply denote $k_{n,\p}$ by $k_{n}$ and $\mathcal{F}_{n,\p}$ by $\mathcal{F}_{n}.$

The following lemma is a suitable adaptation of Lemma 4.1. from \cite{TWW}.  

\begin{lemma}
	\label{subwords lemma}
	Let $\p$ be a probability vector and $\m$ be the corresponding Bernoulli measure on $\A^{\mathbb{N}}$. Then for $n$ sufficiently large, we have $$\m\left(\bigcup_{\a\in \mathcal{F}_{n}}[\a]\right)\geq \frac{7}{32}.$$
\end{lemma}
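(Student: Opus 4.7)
The plan is to estimate $\int C_{k_n}\,d\m$ from below and then apply a reverse Markov inequality. Set $T:=n-k_n+1$, and for $\a\in\A^{\mathbb{N}}$ and $1\leq i\leq T$ let $\mathbf{w}_i(\a):=(a_i,\ldots,a_{i+k_n-1})$ denote the length-$k_n$ subword starting at position $i$. Define
$$Z(\a):=\#\{(i,j):1\leq j<i\leq T,\ \mathbf{w}_i(\a)=\mathbf{w}_j(\a)\}.$$
Every index $i\in\{1,\ldots,T\}$ whose subword does not repeat an earlier one contributes a distinct element to $C_{k_n}(a_1\cdots a_n)$, so $C_{k_n}(a_1\cdots a_n)\geq T-Z(\a)$. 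It therefore suffices to bound $\int Z\,d\m$ above by an amount meaningfully smaller than $T$.

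Write $\sigma:=\sum_{a\in\A}p_a^2$; the definition of $k_n$ yields $\sigma^{k_n}\leq 1/n$. I would bound $\int Z\,d\m$ by splitting on $d:=i-j$. When $d\geq k_n$ the subwords $\mathbf{w}_i,\mathbf{w}_j$ depend on disjoint coordinates and are $\m$-independent, so $\m(\mathbf{w}_i=\mathbf{w}_j)=\sigma^{k_n}$, and these pairs contribute at most $\binom{T}{2}\sigma^{k_n}\leq n/2$. When $0<d<k_n$ the identity $\mathbf{w}_i=\mathbf{w}_j$ forces the $k_n+d$ consecutive variables $a_j,\ldots,a_{j+k_n+d-1}$ to be constant on each of the $d$ arithmetic progressions with common difference $d$ contained in $\{j,j+1,\ldots,j+k_n+d-1\}$, and each such progression has at least two terms because $d<k_n$. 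Using $\max_{a\in\A}p_a\leq\sqrt{\sigma}$, one has the elementary bound $\sum_{a\in\A}p_a^c\leq(\max_a p_a)^{c-2}\sigma\leq\sigma^{c/2}$ for $c\geq 2$, so multiplying over the $d$ progressions yields $\m(\mathbf{w}_i=\mathbf{w}_j)\leq\sigma^{(k_n+d)/2}\leq n^{-1/2}\sigma^{d/2}$. Summing over the at most $T$ values of $i$ and then over $d\geq 1$ bounds the overlap contribution by $T\cdot n^{-1/2}\cdot\sqrt{\sigma}/(1-\sqrt{\sigma})=O(\sqrt{n})$. Combining the two cases gives $\int Z\,d\m\leq n/2+O(\sqrt{n})$ and hence $\int C_{k_n}\,d\m\geq T-\int Z\,d\m\geq 2n/5$ for all $n$ sufficiently large.

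Finally, since $0\leq C_{k_n}\leq T$, writing $q:=\m(\bigcup_{\a\in\mathcal{F}_n}[\a])$ and splitting the expectation between $\{C_{k_n}<\lfloor n/10\rfloor\}$ and its complement gives $\int C_{k_n}\,d\m\leq\lfloor n/10\rfloor(1-q)+Tq$, whence
$$q\geq\frac{\int C_{k_n}\,d\m-\lfloor n/10\rfloor}{T-\lfloor n/10\rfloor}\geq\frac{2n/5-n/10}{9n/10+1}\geq\frac{1}{3}>\frac{7}{32}$$
once $n$ is sufficiently large. The main obstacle is the overlap regime $0<d<k_n$, where the collision probability between two overlapping subwords can a priori be close to $1$; the decisive input is the inequality $\max_a p_a\leq\sqrt{\sigma}$, which turns the periodicity forced by $\mathbf{w}_i=\mathbf{w}_j$ into geometric decay in $d$ and so keeps the overlap contribution to $\int Z\,d\m$ of strictly lower order than the contribution from independent pairs.
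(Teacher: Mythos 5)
Your proof is correct and reaches the desired conclusion, but it does so by a genuinely different route from the paper. The paper works with the random variable $\Sigma_{k_n}^n=\sum_{\b\in\A^{k_n}}|Z_1\cdots Z_n|_\b^2$, which in your notation equals $T+2Z$; it bounds $\mathbb{E}(\Sigma_{k_n}^n)\leq 5n$ and then, conditional on $C_{k_n}<\lfloor n/10\rfloor$, uses a convexity (sum-of-squares) lower bound on $\Sigma_{k_n}^n$ to derive $\mathbb{P}(C_{k_n}<\lfloor n/10\rfloor)\leq 25/32$. You instead use the linear inequality $C_{k_n}\geq T-Z$ and a reverse Markov inequality directly on $C_{k_n}$, which is simpler and still gives a lower bound approaching $1/3$. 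The more substantial difference is in the estimate for colliding overlapping windows, $0<d=i-j<k_n$: the paper splits into $d\gtrless k_n/4$ and in each case extracts two disjoint blocks by a shift-and-iterate argument, landing on the uniform bound $\sigma^{\lfloor k_n/4\rfloor}$, so that the overlap contribution is $\ll n\cdot k_n\,\sigma^{\lfloor k_n/4\rfloor}=O(n)$. Your residue-class decomposition of the periodicity constraint, combined with $\sum_a p_a^c\leq\sigma^{c/2}$ (from $\max_a p_a\leq\sqrt\sigma$), gives the sharper bound $\m(\mathbf w_i=\mathbf w_j)\leq\sigma^{(k_n+d)/2}\leq n^{-1/2}\sigma^{d/2}$, which decays geometrically in $d$ and so keeps the total overlap contribution at $O(\sqrt n)$, strictly smaller order than the diagonal-free term $n/2$. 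Both approaches buy the same conclusion; yours is arguably cleaner in its treatment of the overlapping case and avoids the case split at $k_n/4$.

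One small arithmetic slip at the end: $\frac{3n/10}{9n/10+1}$ is always strictly less than $1/3$, so the claim ``$\geq 1/3$'' is not literally correct. However, the ratio tends to $1/3$ as $n\to\infty$, and for $n$ sufficiently large it exceeds, say, $0.3>7/32$, so the stated conclusion $\m(\bigcup_{\a\in\mathcal F_n}[\a])\geq 7/32$ still follows. It would be cleaner simply to say the lower bound tends to $1/3$ and hence exceeds $7/32$ for $n$ large.
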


\begin{proof}
Given $\a\in \A^{n}$ and $\b \in \A^{k_n},$ let $$|\a|_{\b}:=\# \{0\leq l\leq n-k_{n}:a_{l+1}\ldots a_{l+k_{n}}=\b\}.$$ 
It is convenient to express our proof using the language of probability theory. As such let $(Z_l)_{l=1}^n$ be a sequence of i.i.d random variables taking values in $\A$ such that $\mathbb{P}(Z_l=a)=p_{a}$ for all $a\in \A$. Let $\Sigma_{k_n}^{n}$ be the real valued random variable given by $$\Sigma_{k_n}^{n}:=\sum_{\b\in \A^{k_n}}|Z_{1}\ldots Z_{n}|_{\b}^{2}.$$ We start by bounding the expectation of $\Sigma_{k_n}^{n}$:
\begin{align}
\label{expectation rewrite}
\mathbb{E}(\Sigma_{k_n}^{n})&=\sum_{\b\in \A^{k_n}}\mathbb{E}\left(|Z_{1}\ldots Z_{n}|_{\b}^{2}\right)\nonumber\\
&=\sum_{\b\in \A^{k_n}} \mathbb{E}\left(\left(\sum_{l=0}^{n-k_n}\mathbbm{1}_{\b}(Z_{l+1}\ldots Z_{l+k_n})\right)^{2}\right)\nonumber\\
&=\sum_{\b\in \A^{k_n}}\mathbb{E}\left(\sum_{l,j=0}^{n-k_{n}}\mathbbm{1}_{\b}(Z_{l+1}\ldots Z_{l+k_n})\cdot \mathbbm{1}_{\b}(Z_{j+1}\ldots Z_{j+k_n}))\right)\nonumber\\
&=\sum_{l,j=0}^{n-k_{n}}\mathbb{E}\left(\sum_{\b\in \A^{k_n}}\mathbbm{1}_{\b}(Z_{l+1}\ldots Z_{l+k_n})\cdot \mathbbm{1}_{\b}(Z_{j+1}\ldots Z_{j+k_n}))\right)\nonumber\\
&=\sum_{l,j=0}^{n-k_{n}}\mathbb{E}(\mathbbm{1}_{Z_{l+1}\ldots Z_{l+k_n}=Z_{j+1}\ldots Z_{j+k_{n}}})\nonumber\\
&=\sum_{l,j=0}^{n-k_{n}}\mathbb{P}(Z_{l+1}\ldots Z_{l+k_n}=Z_{j+1}\ldots Z_{j+k_{n}}).
\end{align}
With \eqref{expectation rewrite} in mind, we now bound $\mathbb{P}(Z_{l+1}\ldots Z_{l+k_n}=Z_{j+1}\ldots Z_{j+k_{n}})$ from above. If $l=j$ then clearly $\mathbb{P}(Z_{l+1}\ldots Z_{l+k_n}=Z_{j+1}\ldots Z_{j+k_{n}})=1.$ We remark that for any parameters $l,j,k\in\mathbb{N}$, if $l+k<j+1$ then by independence we have $$\mathbb{P}(Z_{l+1}\ldots Z_{l+k}=Z_{j+1}\ldots Z_{j+k})=\prod_{m=1}^{k}\mathbb{P}(Z_{l+m}=Z_{j+m})=\prod_{m=1}^{k}\left(\sum_{a\in \A}p_{a}^2\right)=\left(\sum_{a\in \A}p_{a}^2\right)^{k}.$$ We will use the fact that if $l+k<j+1$ then 
\begin{equation}
\label{independence identity}
\mathbb{P}(Z_{l+1}\ldots Z_{l+k}=Z_{j+1}\ldots Z_{j+k})=\left(\sum_{a\in \A}p_{a}^2\right)^{k}
\end{equation}throughout our proof.

We now proceed via a case analysis. If $l+k_{n}<j+1$ then \eqref{independence identity} immediately implies 
\begin{equation}
\label{case11}
\mathbb{P}(Z_{l+1}\ldots Z_{l+k_n}=Z_{j+1}\ldots Z_{j+k_n})=\left(\sum_{a\in \A}p_{a}^2\right)^{k_n}.
\end{equation}
Now suppose that $l+\frac{k_n}{4}\leq j+1\leq l+k_{n}$. Observe that $Z_{l+1}\ldots Z_{l+k_n}=Z_{j+1}\ldots Z_{j+k_{n}}$ implies that $Z_{2l-j+k_n+1}\ldots Z_{l+k_n}=Z_{l+k_{n}+1}\ldots Z_{j+k_{n}}.$ Therefore by \eqref{independence identity} we have
\begin{align}
\label{case22}
\mathbb{P}(Z_{l+1}\ldots Z_{l+k_n}=Z_{j+1}\ldots Z_{j+k_{n}})&\leq \mathbb{P}(Z_{2l-j+k_n+1}\ldots Z_{l+k_n}=Z_{l+k_{n}+1}\ldots Z_{j+k_{n}})\nonumber \\
&=\left(\sum_{a\in \A}p_{a}^2\right)^{j-l+1}\nonumber\\
&\leq \left(\sum_{a\in \A}p_{a}^2\right)^{\frac{k_{n}}{4}}.
\end{align} Let us now suppose that $l+1<j+1< l+\frac{k_{n}}{4}$ and $Z_{l+1}\ldots Z_{l+k_n}=Z_{j+1}\ldots Z_{j+k_{n}}.$  Notice that $Z_{l+1}\ldots Z_{l+k_n}=Z_{j+1}\ldots Z_{j+k_{n}}$ implies that $Z_{l+i+1}\ldots Z_{l+i+\lfloor \frac{k_n}{4}\rfloor}=Z_{j+i+1}\ldots Z_{j+i+\lfloor \frac{k_n}{4}\rfloor}$ for any $0\leq i<k_{n}-\lfloor \frac{k_n}{4}\rfloor.$ Repeatedly applying this identity, it follows that \begin{align*}
Z_{l+1}\ldots Z_{l+\lfloor \frac{k_n}{4}\rfloor}=Z_{j+1}\ldots Z_{j+\lfloor \frac{k_n}{4}\rfloor}&=Z_{l+2(j-l)+1}\ldots Z_{l+2(j-l)+\lfloor \frac{k_n}{4}\rfloor}\\
&=\cdots\\
&=Z_{l+d(j-l)+1}\ldots Z_{l+d(j-l)+\lfloor \frac{k_n}{4}\rfloor}
\end{align*} for any $d$ such that $l+d(j-l)+1<k_{n}-\lfloor\frac{k_n}{4}\rfloor.$ Since $j-l<\frac{k_n}{4}$, it follows that we can pick $d$ such that $l+\lfloor \frac{k_n}{4}\rfloor<l+d(j-l)+1\leq k_{n}-\lfloor \frac{k_n}{4}\rfloor.$ Taking such a $d$, it then follows from \eqref{independence identity} that
\begin{align}
\label{case33}
\mathbb{P}(Z_{l+1}\ldots Z_{l+k_n}=Z_{j+1}\ldots Z_{j+k_{n}})&\leq \mathbb{P}(Z_{l+1}\ldots Z_{l+\lfloor \frac{k_n}{4}\rfloor}=Z_{l+d(j-l)+1}\ldots Z_{l+d(j-l)+\lfloor \frac{k_n}{4}\rfloor})\nonumber\\
&=\left(\sum_{a\in \A}p_{a}^2\right)^{\lfloor \frac{k_n}{4}\rfloor}.
\end{align}
Recalling \eqref{expectation rewrite}, we have
\begin{align*}
\mathbb{E}(\Sigma_{k_n}^{n})&=\sum_{l,j=0}^{n-k_{n}}\mathbb{P}(Z_{l+1}\ldots Z_{l+k_n}=Z_{j+1}\ldots Z_{j+k_{n}})\\
&=\sum_{l=0}^{n-k_n}1+2\sum_{l=0}^{n-k_{n}-1}\sum_{j=l+1}^{n-k_n}\mathbb{P}(Z_{l+1}\ldots Z_{l+k_n}=Z_{j+1}\ldots Z_{j+k_{n}})\\
&=\sum_{l=0}^{n-k_n}1+2\sum_{l=0}^{n-k_{n}-1}\left(\sum_{l+k_n< j+1\leq n-k_n}\mathbb{P}(\cdot)+\sum_{l+\frac{k_n}{4}\leq j+1\leq l+k_{n}}\mathbb{P}(\cdot)+\sum_{l+1< j+1<l+\frac{k_n}{4}}\mathbb{P}(\cdot)\right).
\end{align*}Applying the bounds provided by \eqref{case11}, \eqref{case22}, \eqref{case33} we have 
\begin{align*}
\mathbb{E}(\tilde{\Sigma}_{k_n}^{n})&\leq n+2\sum_{l=0}^{n-k_{n}-1}\left(\sum_{l+k_n< j+1\leq n-k_n}\left(\sum_{a\in \A}p_{a}^2\right)^{k_n}+\sum_{l+\frac{k_n}{4}\leq j+1\leq l+k_{n}}\left(\sum_{a\in \A}p_{a}^2\right)^{ \frac{k_n}{4}}+\sum_{l+1< j+1<l+\frac{k_n}{4}}\left(\sum_{a\in \A}p_{a}^2\right)^{\lfloor \frac{k_n}{4}\rfloor}\right)\\
&\leq n+2\sum_{l=0}^{n-k_{n}-1}\left(n\left(\sum_{a\in \A}p_{a}^2\right)^{k_n}+k_{n}\left(\sum_{a\in \A}p_{a}^2\right)^{\lfloor \frac{k_n}{4}\rfloor}\right).
\end{align*}
By the definition of $k_{n}$ we know that $$n\left(\sum_{a\in \A}p_{a}^2\right)^{k_n}\leq 1.$$ Moreover, as $k_{n}$ grows logarithmically in $n$, and $\left(\sum_{a\in \A}p_{a}^2\right)^{\lfloor \frac{k_n}{4}\rfloor}$ decays to zero polynomially fast, we know that  $$k_{n}\left(\sum_{a\in \A}p_{a}^2\right)^{\lfloor \frac{k_n}{4}\rfloor}\leq 1$$ for all $n$ sufficiently large. Therefore for $n$ sufficiently large we have 
\begin{equation}
\label{expectation bound}
\mathbb{E}(\Sigma_{k_n}^{n})\leq n+2\sum_{l=0}^{n-k_{n}-1}2\leq 5n.
\end{equation} Let $B$ be the event that
$$\#\left\{\b\in \A^{k_n}:|Z_{1}\ldots Z_{n}|_{\b}\geq 1\right\}< \Big\lfloor \frac{n}{10}\Big\rfloor .$$ We now bound the probability of $B$ from above. It follows from \eqref{expectation bound} that for $n$ sufficiently large we have
\begin{align*}
5n&\geq \mathbb{E}(\Sigma_{k_n}^n)\\
&\geq \mathbb{E}\left(\sum_{\b\in \A^{k_n}}|Z_{1}\ldots Z_{n}|_{\b}^{2}\Big |B\right)\mathbb{P}(B)\\
&\geq \mathbb{P}(B)\cdot \min\left\{m_{1}^2+\cdots + m_{\lfloor \frac{n}{10}\rfloor}^2:m_{1}+\cdots +m_{\lfloor \frac{n}{10}\rfloor}=n-k_n+1\right\}\\
&=\mathbb{P}(B)\cdot \left(\frac{n-k_n+1}{\lfloor \frac{n}{10}\rfloor}\right)^2\cdot \Big\lfloor \frac{n}{10}\Big\rfloor\\
&=\mathbb{P}(B)\cdot \left(n-k_n+1\right)^2\cdot \Big\lfloor \frac{n}{10}\Big\rfloor^{-1}\\
&\geq \mathbb{P}(B)\cdot 10\cdot\frac{(4n/5)^2}{n}\\
&=\mathbb{P}(B)\cdot \frac{32n}{5}.
\end{align*}In the penultimate line we used that for $n$ sufficiently large we have $n-k_{n}+1\geq \frac{4n}{5}$. This is because $k_n$ grows logarithmically in $n$. Therefore $$\mathbb{P}(B)\leq \frac{25}{32} .$$ This means that $\mathbb{P}(B^c)\geq 7/32$. Since $$\m\left(\bigcup_{\a\in \mathcal{F}_n}[\a]\right)=\mathbb{P}(B^c)$$ this completes our proof. 
\end{proof}
Let us now suppose that we are given a probability vector $\p$ and an IFS $\Phi$. Recall that the entropy of $\p$ and the Lyapunov exponent of $\p$ are defined to be $$h_{\p}=-\sum_{a\in \A}p_{a}\log p_{a} \textrm{ and }\chi_{\Phi,p}=-\sum_{a\in \A}p_{a}\log r_{a}$$ respectively. Given a word $\a\in \A^n$ and $\epsilon>0$ we let 
\begin{align*}
Bad(\a,\epsilon):=&\left\{0\leq l\leq n-k_{n}: \prod_{i=1}^{k_{n}}r_{a_{l+i}}\notin \left[e^{k_n(-\chi_{\Phi,\p}-\epsilon)},e^{k_n(-\chi_{\Phi,\p}+\epsilon)}\right]\right\}\\
\cup&\left\{0\leq l\leq n-k_{n}:\prod_{i=1}^{k_{n}}p_{a_{l+i}}\notin \left[e^{k_n(-h_{\p}-\epsilon)},e^{k_n(-h_{\p}+\epsilon)}\right]\right\}.
\end{align*} We then define $$Bad(n,\epsilon):=\left\{\a\in \A^n: \# Bad(\a,\epsilon)\geq \Big\lfloor \frac{n}{20}\Big\rfloor\right\}.$$

\begin{lemma}
	\label{Lyapunov lemma}
Let $\Phi$ be an IFS, $\p$ be a probability vector, and $\m$ be the Bernoulli measure corresponding to $\p$. For any $\epsilon>0,$ there exists $\gamma\in(0,1)$ such that $$\m\left(\bigcup_{\a\in Bad(n,\epsilon)}[\a]\right)\ll \gamma^{k_n} .$$
\end{lemma}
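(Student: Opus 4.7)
The plan is to reduce the lemma to a standard large deviations estimate combined with Markov's inequality. Under $\m$, the coordinates $a_1, a_2, \ldots$ are i.i.d. with $\mathbb{P}(a_i = a) = p_a$. For any fixed position $l$, the two quantities appearing in the definition of $Bad(\a, \epsilon)$ are $\frac{1}{k_n}\sum_{i=1}^{k_n} \log r_{a_{l+i}}$ and $\frac{1}{k_n}\sum_{i=1}^{k_n} \log p_{a_{l+i}}$, which under $\m$ are empirical averages of $k_n$ i.i.d.\ bounded random variables (bounded because $\A$ is finite) with means $-\chi_{\Phi,\p}$ and $-h_{\p}$ respectively.

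First, I would invoke a Chernoff/Cram\'er-type bound. Since the underlying variables are bounded, the moment generating function is finite everywhere, and standard large deviations theory yields a constant $\tau = \tau(\epsilon, \Phi, \p) \in (0,1)$ such that, for every fixed $l$ and every $n$ with $l + k_n \leq n$,
\[
\m\bigl(\{\a : l \in Bad(\a, \epsilon)\}\bigr) \;\leq\; 4\tau^{k_n},
\]
the factor $4$ absorbing the four one-sided deviation events. Crucially, $\tau$ does not depend on $l$ or $n$, because at every position $l$ we are looking at the same distribution of $k_n$ i.i.d.\ samples; the only $n$-dependence is through the length $k_n$ of the window, which appears only in the exponent.

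Next, I would apply Markov's inequality to the counting variable $N(\a) := \#Bad(\a, \epsilon)$. By linearity of expectation and the previous bound,
\[
\int N(\a)\, d\m(\a) \;=\; \sum_{l=0}^{n-k_n} \m\bigl(l \in Bad(\cdot, \epsilon)\bigr) \;\leq\; 4(n-k_n+1)\,\tau^{k_n} \;\leq\; 4n\,\tau^{k_n}.
\]
Markov's inequality then gives
\[
\m\left(\bigcup_{\a \in Bad(n,\epsilon)}[\a]\right) \;=\; \m\!\left(N \geq \Big\lfloor\frac{n}{20}\Big\rfloor\right) \;\leq\; \frac{4n\,\tau^{k_n}}{\lfloor n/20 \rfloor} \;\ll\; \tau^{k_n},
\]
and taking $\gamma := \tau$ completes the argument. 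Note that there is no issue with dependence between different positions $l$, because Markov only uses the expected count, not independence.

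The only genuine technical point is the Chernoff estimate. There is nothing delicate here since $\A$ is finite and the variables $\log r_a$, $\log p_a$ are uniformly bounded, so one can either cite Cram\'er's theorem or give a direct exponential-moment bound: for the contraction-ratio deviation, writing $S = \sum_{i=1}^{k_n} \log r_{a_{l+i}}$ and applying $\m(S + k_n \chi_{\Phi,\p} > k_n\epsilon) \leq e^{-\lambda k_n \epsilon} \mathbb{E}[e^{\lambda(\log r_{a_1} + \chi_{\Phi,\p})}]^{k_n}$, one minimizes over $\lambda > 0$ to extract an exponential rate strictly less than $1$; the entropy bound is handled identically, and combining the four resulting rates yields the uniform $\tau$.
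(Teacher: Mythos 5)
Your proof is correct and follows essentially the same strategy as the paper: bound the expected number of bad positions via an exponential concentration inequality for bounded i.i.d.\ random variables (you cite Chernoff/Cram\'er, the paper cites Hoeffding), then apply Markov's inequality to the counting variable. The only cosmetic difference is that you apply Markov once to the combined count $N(\a)=\#Bad(\a,\epsilon)$, whereas the paper splits into an $r$-count and a $p$-count, applies Markov with threshold $\tfrac{1}{2}\lfloor n/20\rfloor$ to each, and then takes a union bound; your bookkeeping is slightly cleaner and equally valid.
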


\begin{proof}
	As in the proof of the previous lemma, it is useful to express this proof in terms of random variables. Let $(Z_{l})_{l=1}^n$ be a sequence of i.i.d. random variables taking values in $\A$ such that $\mathbb{P}(Z_l=a)=p_{a},$ and let $\epsilon>0$ be arbitrary. We start our proof by bounding from above the expectation $$\mathbb{E}\left(\sum_{l=0}^{n-k_n}\mathbbm{1}_{[e^{k_n(-\chi_{\Phi,\p}-\epsilon)},e^{k_n(-\chi_{\Phi,\p}+\epsilon)}]^c}\left(\prod_{i=1}^{k_n}r_{Z_{l+i}}\right)\right).$$
By the linearity of expectation we have 
\begin{align*}\mathbb{E}\left(\sum_{l=0}^{n-k_n}\mathbbm{1}_{[e^{k_n(-\chi_{\Phi,\p}-\epsilon)},e^{k_n(-\chi_{\Phi,\p}+\epsilon)}]^c}\left(\prod_{i=1}^{k_n}r_{Z_{l+i}}\right)\right)&=\sum_{l=0}^{n-k_n}\mathbb{E}\left(\mathbbm{1}_{[e^{k_n(-\chi_{\Phi,\p}-\epsilon)},e^{k_n(-\chi_{\Phi,\p}+\epsilon)}]^c}\left(\prod_{i=1}^{k_n}r_{Z_{l+i}}\right)\right)\\
&=\sum_{l=0}^{n-k_n}\mathbb{P}\left(\prod_{i=1}^{k_n}r_{Z_{l+i}}\notin [e^{k_n(-\chi_{\Phi,\p}-\epsilon)},e^{k_n(-\chi_{\Phi,\p}+\epsilon)}]\right).
\end{align*}
By Hoeffding's inequality for large deviations \cite{Hoe}, there exists $\gamma_1:=\gamma_1(\epsilon,\p,\Phi)\in(0,1)$ such that $$\mathbb{P}\left(\prod_{i=1}^{k_n}r_{Z_{l+i}}\notin [e^{k_n(-\chi_{\Phi,\p}-\epsilon)},e^{k_n(-\chi_{\Phi,\p}+\epsilon)}]\right)\ll \gamma_{1}^{k_n}.$$
Therefore $$\mathbb{E}\left(\sum_{l=0}^{n-k_n}\mathbbm{1}_{[e^{k_n(-\chi_{\Phi,\p}-\epsilon)},e^{k_n(-\chi_{\Phi,\p}+\epsilon)}]^c}\left(\prod_{i=1}^{k_n}r_{Z_{l+i}}\right)\right)\ll n\cdot \gamma_{1}^{k_n}.$$ Now by Markov's inequality, we have $$\mathbb{P}\left(\sum_{l=0}^{n-k_n}\mathbbm{1}_{[e^{k_n(-\chi_{\Phi,\p}-\epsilon)},e^{k_n(-\chi_{\Phi,\p}+\epsilon)}]^{c}}\left(\prod_{i=1}^{k_n}r_{Z_{l+i}}\right)\geq \frac{1}{2}\Big\lfloor\frac{n}{20}\Big\rfloor \right)\ll \gamma_{1}^{k_n}.$$ By an analogous argument, it can be shown that there exists $\gamma_{2}\in (0,1)$ such that $$\mathbb{P}\left(\sum_{l=0}^{n-k_n}\mathbbm{1}_{[e^{k_n(-h_{\p}-\epsilon)},e^{k_n(-h{\p}+\epsilon)}]^{c}}\left(\prod_{i=1}^{k_n}p_{Z_{l+i}}\right)\geq \frac{1}{2}\Big\lfloor\frac{n}{20}\Big\rfloor \right)\ll \gamma_{2}^{k_n}.$$
Now let $\gamma=\max\{\gamma_{1},\gamma_{2}\}$. Clearly if $(Z_l)_{l=1}^n$ is such that either $$\prod_{i=1}^{k_n}r_{Z_{l+i}}\notin [e^{k_n(-\chi_{\Phi,\p}-\epsilon)},e^{k_n(-\chi_{\Phi,\p}+\epsilon)}]\textrm{ or } \prod_{i=1}^{k_{n}}p_{Z_{l+i}}\notin \left[e^{k_n(-h_{\p}-\epsilon)},e^{k_n(-h_{\p}+\epsilon)}\right]$$ for $\lfloor\frac{n}{20}\rfloor$ values of $l,$ it must satisfy $$\prod_{i=1}^{k_{n}}r_{a_{l+i}}\notin \left[e^{k_n(-\chi_{\Phi,\p}-\epsilon)},e^{k_n(-\chi_{\Phi,\p}+\epsilon)}\right]$$ for at least $\frac{1}{2}\lfloor\frac{n}{20}\rfloor$ values of $l$ or $$\prod_{i=1}^{k_{n}}p_{a_{l+i}}\notin \left[e^{k_n(-h_{\p}-\epsilon)},e^{k_n(-h_{\p}+\epsilon)}\right]$$ for at least $\frac{1}{2}\lfloor\frac{n}{20}\rfloor$ values of $l.$ As such we may conclude that
\begin{align*}
&\m\left(\bigcup_{\a\in Bad(n,\epsilon)}[\a]\right)\\
=&\mathbb{P}\left(\#\left\{l:\prod_{i=1}^{k_n}r_{Z_{l+i}}\notin [e^{k_n(-\chi_{\Phi,\p}-\epsilon)},e^{k_n(-\chi_{\Phi,\p}+\epsilon)}] \textrm{ or }\prod_{i=1}^{k_{n}}p_{Z_{l+i}}\notin \left[e^{k_n(-h_{\p}-\epsilon)},e^{k_n(-h_{\p}+\epsilon)}\right]\right\}\geq \Big\lfloor \frac{n}{20}\Big\rfloor\right)\\
\leq& \mathbb{P}\left(\sum_{l=0}^{n-k_n}\mathbbm{1}_{[e^{k_n(-\chi_{\Phi,\p}-\epsilon)},e^{k_n(-\chi_{\Phi,\p}+\epsilon)}]^{c}}\left(\prod_{i=1}^{k_n}r_{a_{l+i}}\right)\geq \frac{1}{2}\Big\lfloor\frac{n}{20}\Big\rfloor \right)\\
&+\mathbb{P}\left(\sum_{l=0}^{n-k_n}\mathbbm{1}_{[e^{k_n(-h_{\p}-\epsilon)},e^{k_n(-h{\p}+\epsilon)}]^{c}}\left(\prod_{i=1}^{k_n}p_{a_{l+i}}\right)\geq \frac{1}{2}\Big\lfloor\frac{n}{20}\Big\rfloor \right)\\
\ll &\gamma^{k_n}.
\end{align*}
This completes our proof.
\end{proof}
Combining Lemmas \ref{subwords lemma} and \ref{Lyapunov lemma}, we see that the following statement holds.

\begin{lemma}
	\label{Important lemma}
	Let $\Phi$ be an IFS, $\p$ be a probability vector, and $\m$ be the Bernoulli measure corresponding to $\p$. For any $\epsilon>0$, it is the case that for all $n$ sufficiently large there exists a set $Good(n,\epsilon)\subset \A^n$ satisfying:
\begin{enumerate}
	\item $\m\left(\bigcup_{\a\in Good(n,\epsilon)}[\a]\right)\geq 7/64.$
	\item For each $\a\in Good(n,\epsilon)$ there exists a set $W_{\a}\subset \{0,\ldots, n-k_{n}\}$ satisfying:
\begin{enumerate}
	\item $\#W_{\a}\geq \Big\lfloor \frac{n}{20}\Big\rfloor$.
\item If $l,l'\in W_{\a}$ and $l\neq l'$ then $a_{l+1}\ldots a_{l+k_{n}}\neq a_{l'+1}\ldots a_{l'+k_{n}}$.
\item For each $l\in W_{\a}$ we have $$\prod_{i=1}^{k_n}r_{a_{l+i}}\in \left[e^{k_n(-\chi_{\Phi,\p}-\epsilon)},e^{k_n(-\chi_{\Phi,\p}+\epsilon)}\right]$$ and $$\prod_{i=1}^{k_n}p_{a_{l+i}}\in  \left[e^{k_n(-h_{\p}-\epsilon)},e^{k_n(-h_{\p}+\epsilon)}\right].$$
\end{enumerate}
\end{enumerate}
\end{lemma}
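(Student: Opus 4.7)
The plan is to take $Good(n,\epsilon) := \mathcal{F}_n \setminus Bad(n,\epsilon)$ and, for each $\a$ in this set, to build $W_\a$ by starting from a list of positions representing the distinct length-$k_n$ subwords of $\a$ and then deleting the ``bad'' ones. With this construction, property 2(b) will hold by design, property 2(c) will follow from the deletion step, and property 2(a) will reduce to a numerical comparison of the cardinality bounds supplied by membership in $\mathcal{F}_n$ and failure of membership in $Bad(n,\epsilon)$.

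For part (1) I would combine the two preceding lemmas via a simple union bound. Using Lemma \ref{subwords lemma} and Lemma \ref{Lyapunov lemma} we obtain
$$\m\left(\bigcup_{\a \in Good(n,\epsilon)}[\a]\right) \geq \m\left(\bigcup_{\a \in \mathcal{F}_n}[\a]\right) - \m\left(\bigcup_{\a \in Bad(n,\epsilon)}[\a]\right) \geq \frac{7}{32} - O(\gamma^{k_n}).$$
Since $k_n \to \infty$, the error term tends to zero, and the right-hand side exceeds $7/64$ for all sufficiently large $n$.

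For part (2), fix $\a \in Good(n,\epsilon)$. Since $\a \in \mathcal{F}_n$, the word $\a$ contains at least $\lfloor n/10 \rfloor$ distinct subwords of length $k_n$, so I can select $D_\a \subset \{0, \ldots, n-k_n\}$ with $\#D_\a = \lfloor n/10 \rfloor$ and with distinct elements of $D_\a$ producing distinct length-$k_n$ subwords (one representative per distinct subword). Since $\a \notin Bad(n,\epsilon)$, the set $B_\a := Bad(\a, \epsilon)$ of ``bad'' starting positions satisfies $\#B_\a \leq \lfloor n/20 \rfloor - 1$. Setting $W_\a := D_\a \setminus B_\a$, property 2(b) follows from the construction of $D_\a$, property 2(c) follows because every position outside $Bad(\a,\epsilon)$ satisfies both of the required product estimates, and property 2(a) reduces to the elementary inequality $\lfloor n/10 \rfloor \geq 2\lfloor n/20 \rfloor$ (the right-hand side is an integer bounded above by $n/10$), which yields $\#W_\a \geq \lfloor n/10 \rfloor - (\lfloor n/20 \rfloor - 1) \geq \lfloor n/20 \rfloor + 1$.

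There is no real obstacle here: all of the genuine analytic work has been done in Lemmas \ref{subwords lemma} and \ref{Lyapunov lemma}, and the present lemma is essentially a bookkeeping combination of their conclusions. The only nontrivial checks are the decay $\gamma^{k_n} \to 0$ in the measure estimate and the floor-function inequality used for part 2(a), both of which are routine.
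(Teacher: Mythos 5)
Your construction $Good(n,\epsilon) := \mathcal{F}_n \setminus Bad(n,\epsilon)$, the union-bound measure estimate, and the deletion of bad positions from a set of representatives of distinct subwords is exactly the combination of Lemmas \ref{subwords lemma} and \ref{Lyapunov lemma} that the paper has in mind (the paper simply asserts the lemma follows from combining those two and gives no further proof). Your floor-function bookkeeping for 2(a) and the observation that $\gamma^{k_n}\to 0$ are both correct, so this is a valid spelled-out version of the paper's implicit argument.
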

Lemma \ref{Important lemma} is the main technical result in this section and will play an important part in our proof of Theorem \ref{Main theorem}\footnote{In the proof of Statement $3$ from Theorem \ref{Main theorem} we do not require item $2c$ from Lemma \ref{Important lemma}. This is because our IFS is equicontractive and so the probability vector $\p$ is the uniform vector $(\A^{-1})_{a\in \A}$. Therefore we know exactly how the products in item $2c$ of Lemma \ref{Important lemma} will behave. It is instructive to think that the proof of Statement $3$ follows the proof of Statement $2$ without the introduction of the parameter $\epsilon$.}.

\subsection{Proof of Statement 2 from Theorem \ref{Main theorem}}
Before moving on to our proof of Statement $2,$ it is useful to record for later reference a number of technical results. We start by recalling a well known lemma.

\begin{lemma}
	\label{Erdos lemma}
	Let $(X,A,\m)$ be a finite measure space and $E_n\in A$ be a sequence of 
	sets such that $\sum_{n=1}^{\infty}\m(E_n)=\infty.$ Then
	$$\m\left(\limsup_{n\to\infty} E_{n}\right)\geq \limsup_{Q\to\infty}\frac{(\sum_{n=1}^{Q}\m(E_{n}))^{2}}{\sum_{n,m=1}^{Q}\m(E_{n}\cap E_m)}.$$
\end{lemma}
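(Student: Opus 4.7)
The plan is to prove this via the second moment method applied to the indicator counting function $S_Q = \sum_{n=1}^{Q} \mathbbm{1}_{E_n}$. This is the standard route to what is sometimes called the Chung--Erd\H{o}s inequality, and the two inputs are Cauchy--Schwarz and continuity of measure from above.

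First I would observe the two identities
$$\int S_Q\, d\m = \sum_{n=1}^{Q}\m(E_n), \qquad \int S_Q^2\, d\m = \sum_{n,m=1}^{Q}\m(E_n\cap E_m).$$
Then I would apply Cauchy--Schwarz to the functions $S_Q$ and $\mathbbm{1}_{\{S_Q>0\}}$, using that $S_Q \mathbbm{1}_{\{S_Q>0\}}=S_Q$ to obtain
$$\left(\sum_{n=1}^{Q}\m(E_n)\right)^{2} = \left(\int S_Q\, d\m\right)^{2} \leq \m\!\left(\bigcup_{n=1}^{Q}E_n\right)\cdot \sum_{n,m=1}^{Q}\m(E_n\cap E_m).$$
Rearranging gives the finite-$Q$ bound
$$\m\!\left(\bigcup_{n=1}^{Q}E_n\right) \geq \frac{(\sum_{n=1}^{Q}\m(E_n))^{2}}{\sum_{n,m=1}^{Q}\m(E_n\cap E_m)}.$$

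To upgrade this to a bound on the limsup, I would apply the same argument to tails. For each fixed $N$, the same Cauchy--Schwarz estimate yields
$$\m\!\left(\bigcup_{n=N}^{\infty}E_n\right) \geq \limsup_{Q\to\infty}\frac{(\sum_{n=N}^{Q}\m(E_n))^{2}}{\sum_{n,m=N}^{Q}\m(E_n\cap E_m)}.$$
Now I would show this tail limsup is at least the full limsup $L$ appearing in the statement. Since the denominator only decreases when restricting to $n,m\geq N$ and the numerator satisfies $\sum_{n=N}^{Q}\m(E_n)=\sum_{n=1}^{Q}\m(E_n)-C_N$ for the constant $C_N=\sum_{n=1}^{N-1}\m(E_n)$, the divergence hypothesis $\sum_{n}\m(E_n)=\infty$ forces $(\sum_{n=N}^{Q}\m(E_n))^2/(\sum_{n=1}^{Q}\m(E_n))^2\to 1$ as $Q\to\infty$. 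Hence each tail limsup is $\geq L$.

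Finally, since $\m$ is a finite measure, continuity from above gives
$$\m\!\left(\limsup_{n\to\infty}E_n\right) = \lim_{N\to\infty}\m\!\left(\bigcup_{n=N}^{\infty}E_n\right) \geq L,$$
which is the desired inequality. There is no genuine obstacle here; the only point requiring a little care is the tail comparison, where one must use the divergence hypothesis to absorb the constant $C_N$ that arises from throwing away the first $N-1$ terms.
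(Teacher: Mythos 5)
Your proof is correct, and it is the standard second-moment argument for the Kochen--Stone strengthening of the Borel--Cantelli lemma. A word of context: the paper does not actually prove Lemma \ref{Erdos lemma}; it attributes the result to Kochen and Stone and points the reader to Harman's or Sprind\v{z}uk's texts for a proof, so there is no in-paper argument to compare against. Your route is precisely what those references do. Each step checks out: Cauchy--Schwarz applied to $S_Q$ and $\mathbbm{1}_{\{S_Q>0\}}$ gives the finite-$Q$ bound on $\m(\cup_{n\leq Q}E_n)$; applying the same bound to the shifted sequence $(E_n)_{n\geq N}$, noting that discarding terms can only decrease the denominator while the numerator ratio $(\sum_{n=N}^Q\m(E_n))^2/(\sum_{n=1}^Q\m(E_n))^2\to 1$ by divergence, shows the tail bound dominates the quantity $L$ in the statement; and continuity of the finite measure $\m$ from above converts the tail unions into the limsup. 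The only cosmetic caveat worth flagging is that the finite-$Q$ inequality should be read for $Q$ large enough that the denominator is positive (which the divergence hypothesis guarantees), but this does not affect the limsup.
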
Lemma \ref{Erdos lemma} is due to Kochen and Stone \cite{KocSto}. For a proof of this lemma see either \cite[Lemma 2.3]{Har} or \cite[Lemma 5]{Spr}. The following density lemma has been phrased for our purposes and follows from more general results of Rigot \cite{Rig}.
\begin{lemma}
	\label{Density lemma}
	Let $\m$ be a Bernoulli measure on $\A^{\mathbb{N}}$ and $E\subset \A^{\mathbb{N}}$. Suppose that there exists $c>0$ such that for each finite word $\c\in \cup_{n=1}^{\infty}\A^{n}$ we have 
	$$\m([\c]\cap E)\geq c\cdot \m([\c]).$$ Then $\m(E)=1$.
\end{lemma}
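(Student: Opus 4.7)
The plan is to argue by contradiction. Suppose that $\m(E^c) > 0$; I aim to derive a contradiction from the density hypothesis.

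The key observation is that on $\A^{\mathbb{N}}$ the cylinder sets form a basis for the topology, and any two cylinders are either nested or disjoint. Consequently, any Borel set, in particular $E^c$, can be approximated from the outside, up to arbitrary $\epsilon > 0$ in $\m$-measure, by a pairwise disjoint countable union $U = \bigsqcup_{i} [\c_i]$ of cylinders. This is the standard outer-regularity statement for product Bernoulli measures, proved by covering $E^c$ by a union of cylinders of total measure at most $\m(E^c) + \epsilon$ and then refining overlapping cylinders so that they are pairwise disjoint.

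Given such an approximation, I would apply the density hypothesis cylinder-by-cylinder. For each $i$ one has $\m([\c_i]\cap E^c) = \m([\c_i]) - \m([\c_i]\cap E) \le (1-c)\m([\c_i])$. Summing over $i$ and using disjointness gives
$$\m(E^c) \;\le\; \m(U \cap E^c) \;=\; \sum_{i}\m([\c_i]\cap E^c) \;\le\; (1-c)\,\m(U) \;\le\; (1-c)\bigl(\m(E^c)+\epsilon\bigr).$$
Letting $\epsilon \to 0$ yields $\m(E^c) \le (1-c)\m(E^c)$, and since $c > 0$ this forces $\m(E^c) = 0$, as required.

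There is no genuine obstacle here: the argument is short and belongs to the same circle of ideas as the classical Vitali and Lebesgue density theorems, which is precisely why Rigot's general density results apply. The only points to verify are that $E$ is $\m$-measurable, which is implicit in the statement, and the outer approximation of $E^c$ by a disjoint union of cylinders, which is a standard consequence of the product structure of $(\A^{\mathbb{N}}, \m)$. One could alternatively obtain the same conclusion via L\'evy's upward martingale theorem applied to the filtration generated by cylinders: the ratio $\m([\omega_1\cdots\omega_n]\cap E)/\m([\omega_1\cdots\omega_n])$ converges $\m$-a.e.\ to $\mathbbm{1}_E(\omega)$, yet the hypothesis forces it to stay at least $c > 0$, so $\mathbbm{1}_E = 1$ a.e.
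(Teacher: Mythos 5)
Your argument is correct. The paper does not actually prove this lemma: it states that the lemma ``has been phrased for our purposes and follows from more general results of Rigot,'' citing Rigot's work on differentiation of measures in metric spaces, and leaves the deduction to the reader. You instead give a self-contained elementary proof, and both of your routes are sound. The outer-regularity argument works because the cylinder sets form a semi-algebra generating the Borel $\sigma$-algebra, so Carath\'eodory's construction gives, for any $\epsilon>0$, a countable cylinder cover of $E^{c}$ of total measure $\le \m(E^{c})+\epsilon$; since any two cylinders in $\A^{\N}$ are nested or disjoint, one can discard redundant cylinders to make the cover pairwise disjoint, and then the hypothesis $\m([\c]\cap E^{c})\le (1-c)\m([\c])$ sums cleanly to $\m(E^{c})\le (1-c)(\m(E^{c})+\epsilon)$. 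The martingale alternative via L\'evy's upward theorem is also valid: conditional on the filtration generated by the first $n$ coordinates, $\m([\omega_{1}\cdots\omega_{n}]\cap E)/\m([\omega_{1}\cdots\omega_{n}])$ converges $\m$-a.e.\ to $\mathbbm{1}_{E}$, yet is bounded below by $c>0$, forcing $\mathbbm{1}_{E}=1$ a.e. What your proof buys, compared with the paper's appeal to Rigot, is that it avoids invoking a general Lebesgue-type density theorem for doubling metric measure spaces: the hypothesis here is much stronger (a uniform lower density bound over \emph{all} cylinders, not just along a shrinking sequence at a.e.\ point), so the elementary covering or martingale argument suffices and makes the dependence on the product/cylinder structure of $(\A^{\N},\m)$ explicit.
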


For the rest of this section we fix an IFS $\Phi$ satisfying $$h_{\p}<-2\log \sum_{a\in \A}p_{a}^{2}.$$ We emphasise that throughout this section $\p$ will always be the probability vector corresponding to $(r_{a}^{\dim_{S}(\Phi)}).$ It follows from this inequality that we can pick $\epsilon>0$ sufficiently small such that 
\begin{equation}
\label{technical1}
\frac{-2\chi_{\Phi,\p}}{h_{\p}}<\frac{-\chi_{\Phi,\p}-\epsilon}{-\log \sum_{a\in \A}p_{a}^2}
\end{equation}and
\begin{equation}
\label{technical3}
h_{\p} +\epsilon<-2\log \sum_{a\in A}p_{a}^{2}.
\end{equation}
For the rest of this section we fix $\epsilon$ to be sufficiently small such that \eqref{technical1} and \eqref{technical3} are both satisfied. Let $\Psi:\cup_{n=1}^{\infty}\A^n\to [0,\infty)$ be an arbitrary function of the form $\Psi(\a)=Diam(X_{\a}) g(|\a|)$ for some non-increasing $g:\mathbb{N}\to [0,\infty).$ We also assume that $g$ is such that  $$\sum_{n=1}^{\infty}\sum_{\a\in \A^n} n\cdot (Diam(X_\a)g(n))^{\dim_{S}(\Phi)}=\infty.$$ As previously remarked, this divergence condition is equivalent to 
\begin{equation}
\label{DivergenceA}
\sum_{n=1}^{\infty}n\cdot g(n)^{\dim_{S}(\Phi)}=\infty.
\end{equation} The following two lemmas allow us to replace $g$ with a function whose decaying behaviour we know more about.
\begin{lemma}
	\label{g1lemma}
	Assume that $g$ is a non-increasing function satisfying \eqref{DivergenceA}. Let $g_{1}:\mathbb{N}\to [0,\infty)$ be given by $$g_{1}(n)=\min \left\{g(n),\frac{1}{n^{2/\dim_{S}(\Phi)}}\right\}.$$ Then $$\sum_{n=1}^{\infty}n\cdot g_{1}(n)^{\dim_{S}(\Phi)}=\infty.$$
\end{lemma}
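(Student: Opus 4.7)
My plan is to argue by contradiction. Suppose $\sum_{n=1}^{\infty} n \cdot g_1(n)^{\dim_{S}(\Phi)} < \infty$; the goal is to deduce $\sum_{n=1}^{\infty} n \cdot g(n)^{\dim_{S}(\Phi)} < \infty$, contradicting the hypothesis on $g$. Write $s = \dim_{S}(\Phi)$ and split $\mathbb{N} = A \sqcup B$, where $A = \{n : g(n) \leq n^{-2/s}\}$ and $B = \{n : g(n) > n^{-2/s}\}$. On $A$ we have $g_1(n) = g(n)$ and so $n g_1(n)^s = n g(n)^s$; on $B$ we have $g_1(n)^s = n^{-2}$ and so $n g_1(n)^s = 1/n$. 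As both contributions are non-negative, the assumption gives
$$\sum_{n \in A} n \, g(n)^s < \infty \quad \text{and} \quad \sum_{n \in B} \frac{1}{n} < \infty,$$
so it suffices to prove $\sum_{n \in B} n \, g(n)^s < \infty$.

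Decompose $B$ as a disjoint union of maximal blocks of consecutive integers, $B = \bigsqcup_i [u_i, v_i]$. Note that $B$ cannot be cofinite (else the second finiteness statement above would fail), so every block has finite length, and for each block with $u_i \geq 2$ maximality forces $u_i - 1 \in A$. Hence $g(u_i - 1) \leq (u_i - 1)^{-2/s}$, and the non-increasing hypothesis on $g$ gives $g(n)^s \leq (u_i - 1)^{-2}$ for every $n \in [u_i, v_i]$. Summing and using the elementary lower bound $\sum_{n=u_i}^{v_i} 1/n \geq (v_i - u_i + 1)/v_i$ then yields
$$\sum_{n=u_i}^{v_i} n\, g(n)^s \;\leq\; \frac{(u_i + v_i)(v_i - u_i + 1)}{2(u_i - 1)^2} \;\leq\; \frac{v_i(u_i + v_i)}{2(u_i - 1)^2} \sum_{n=u_i}^{v_i} \frac{1}{n}.$$

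To close the argument, observe that $\sum_i \sum_{n=u_i}^{v_i} 1/n = \sum_{n \in B} 1/n < \infty$ and each inner sum dominates $\log((v_i + 1)/u_i)$; hence $v_i/u_i \to 1$, so $v_i \leq 2 u_i$ for all $i \geq i_0$ say. For such $i$ the prefactor $v_i(u_i + v_i)/(2(u_i - 1)^2)$ is bounded by the absolute constant $12$, so
$$\sum_{i \geq i_0} \sum_{n=u_i}^{v_i} n\, g(n)^s \;\leq\; 12 \sum_{n \in B} \frac{1}{n} < \infty,$$
while the finitely many remaining blocks (including a possible initial one with $u_i = 1$) contribute only finitely much. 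Combining with $\sum_{n \in A} n g(n)^s < \infty$ gives $\sum_n n g(n)^s < \infty$, the desired contradiction.

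The main obstacle is the a priori possibility that $B$ contains long runs of consecutive integers, which would make $\sum_{n \in B} n g(n)^s$ potentially much larger than $\sum_{n \in B} 1/n$. This is resolved by monotonicity of $g$, which lets us dominate $g$ throughout a $B$-block by its value at the adjacent $A$-boundary point, combined with the observation that finiteness of $\sum_{n\in B} 1/n$ forces each block to be short compared to its left endpoint $u_i$, so a single uniform comparison suffices.
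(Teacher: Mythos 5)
Your argument is correct, but it takes a genuinely different route from the paper's. The paper exploits the fact that $g_1$ itself is non-increasing (being the minimum of the non-increasing $g$ and the non-increasing $n^{-2/\dim_S(\Phi)}$): under the contradiction hypothesis, the classical estimate $m^2 g_1(m)^{\dim_S(\Phi)} \ll \sum_{n=\lfloor m/2\rfloor}^m n\, g_1(n)^{\dim_S(\Phi)}$ combined with the vanishing of the tails forces $g_1(m) < m^{-2/\dim_S(\Phi)}$ for all large $m$, and hence $g_1 = g$ eventually, so the divergent series for $g$ would converge. That argument is short and entirely local in $m$. Your approach instead partitions $\mathbb{N}$ according to which term in the $\min$ is active, and then runs a block decomposition on the set $B = \{n : g(n) > n^{-2/\dim_S(\Phi)}\}$, using monotonicity of $g$ to control $g$ on each $B$-block by its value at the adjacent left endpoint in $A$, together with the observation that finiteness of $\sum_{n\in B} 1/n$ forces $v_i/u_i \to 1$ so the comparison constant $v_i(u_i+v_i)/(2(u_i-1)^2)$ is eventually bounded. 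Both proofs hinge essentially on monotonicity, but the paper uses it to prove the implication ``convergence $\Rightarrow$ eventual pointwise smallness of $g_1$,'' while you use it to transfer the $1/n$-tail bound on $B$ into a bound on $n\, g(n)^{\dim_S(\Phi)}$ over $B$. Your version is more elementary but considerably longer; the paper's is slicker and avoids the block bookkeeping. One small point worth noting in your write-up: the bound by the absolute constant $12$ implicitly uses $u_i \geq 2$, which holds for all sufficiently large $i$ since the $u_i$ tend to infinity; this is consistent with your treatment of the initial block separately, but deserves an explicit mention.
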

\begin{proof}
Our argument is an adaptation of the proof of Lemma 6.3 from \cite{Bug}. Since $g$ is non-increasing, and so is the sequence $(n^{-2/\dim_{S}(\Phi)})$, we see that the function $g_{1}$ is also non-increasing. Let us now suppose that
\begin{equation}
\label{Dummy convergence}
\sum_{n=1}^{\infty} n\cdot g_{1}(n)^{\dim_{S}(\Phi)}<\infty.
\end{equation} Since $g_{1}$ is non-increasing, for any $m$ sufficiently large we have $$m^2g_{1}(m)^{\dim_{S}(\Phi)}\leq 10\sum_{n=\lfloor m/2\rfloor }^{m}n\cdot g_{1}(n)^{\dim_{S}(\Phi)}.$$ Equation \eqref{Dummy convergence} then implies that $$\lim_{m\to\infty}\sum_{n=\lfloor m/2\rfloor }^{m}n\cdot g_{1}(n)^{\dim_{S}(\Phi)}=0.$$ Combining the two equations above, we may conclude that $$g_{1}(m)<\frac{1}{m^{2/\dim_{S}(\Phi)}}$$ for all $m$ sufficiently large. This means that $g_{1}(n)=g(n)$ for $n$ sufficiently large. Therefore by \eqref{Dummy convergence} we must have $$\sum_{n=1}^{\infty} n\cdot g(n)^{\dim_{S}(\Phi)}<\infty.$$ This contradicts our initial assumption that $g$ satisfies \eqref{DivergenceA}. Therefore we must have\footnote{This is the only part in our proof of Statement $2$ where the assumption $g$ is non-increasing is used. The proof of Statement $3$ differs here in that we define $g_{1}:\mathbb{N}\to [0,\infty)$ by $g_{1}(n)=\min\{g(n),\frac{1}{n^{1/\dim_{S}(\Phi)}}\}.$ Then the appropriate analogue of Lemma \ref{g1lemma} holds for any $g$ satisfying \eqref{DivergenceA}. This is why Statement $3$ holds for arbitrary $g,$ not just those $g$ that are non-increasing.}. 
\begin{equation*}
\sum_{n=1}^{\infty} n\cdot g_{1}(n)^{\dim_{S}(\Phi)}=\infty.
\end{equation*}
\end{proof} 

\begin{lemma}
	\label{g2lemma}
	Assume that $g$ is a non-increasing function satisfying \eqref{DivergenceA} and let $g_1$ be as in Lemma \ref{g1lemma}. Let $g_{2}:\mathbb{N}\to[0,\infty)$ be given by 
	\[g_{2}(n) = \left\{ \begin{array}{ll}
	g_{1}(n) & \mbox{if $g_{1}(n)\geq \frac{1}{n^{4/\dim_{S}(\Phi)}}$};\\
	0 & \mbox{if $g_{1}(n)< \frac{1}{n^{4/\dim_{S}(\Phi)}}$}.\end{array} \right. \] Then $$\sum_{n=1}^{\infty} n\cdot g_{2}(n)^{\dim_{S}(\Phi)}=\infty.$$
\end{lemma}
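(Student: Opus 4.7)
The plan is to observe that the terms thrown away in passing from $g_{1}$ to $g_{2}$ form a convergent series, so they cannot destroy the divergence already established in Lemma \ref{g1lemma}. Write $s:=\dim_{S}(\Phi)$ for brevity and let $N:=\{n\in\mathbb{N}: g_{1}(n)<n^{-4/s}\}$ be the index set on which $g_{2}$ vanishes.

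The key estimate is immediate: for each $n\in N$ we have $g_{1}(n)^{s}<n^{-4}$, so
$$n\cdot g_{1}(n)^{s}<n^{-3}.$$
Summing over $n\in N$ yields $\sum_{n\in N} n\cdot g_{1}(n)^{s}\leq \sum_{n=1}^{\infty}n^{-3}<\infty$. By construction, for $n\notin N$ we have $g_{2}(n)=g_{1}(n)$, and for $n\in N$ we have $g_{2}(n)=0$. Therefore
$$\sum_{n=1}^{\infty}n\cdot g_{2}(n)^{s}=\sum_{n\notin N}n\cdot g_{1}(n)^{s}=\sum_{n=1}^{\infty}n\cdot g_{1}(n)^{s}-\sum_{n\in N}n\cdot g_{1}(n)^{s}.$$
Lemma \ref{g1lemma} gives $\sum_{n=1}^{\infty}n\cdot g_{1}(n)^{s}=\infty$, while the subtracted term is finite, so the left-hand side is infinite, as required.

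There is no real obstacle here; the lemma is essentially a bookkeeping statement designed to guarantee a polynomial lower bound $g_{2}(n)\geq n^{-4/s}$ on the support of $g_{2}$ without losing the divergence. The only thing to be careful about is to verify that the truncation threshold $n^{-4/s}$ is small enough, relative to $n^{-1}$, so that $n\cdot g_{1}(n)^{s}$ becomes summable on $N$; the choice of exponent $4$ makes $n\cdot n^{-4}=n^{-3}$ summable with room to spare, which is exactly what is needed.
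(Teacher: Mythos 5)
Your proof is correct and is essentially the same as the paper's: the paper simply notes that the conclusion follows from Lemma \ref{g1lemma} together with the convergence of $\sum_{n=1}^{\infty} n\cdot n^{-4}$, which is exactly the estimate you spell out.
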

\begin{proof}
	This follows from Lemma \ref{g1lemma} and the fact $\sum_{n=1}^{\infty}n\cdot \left(\frac{1}{n^{4/\dim_{S}(\Phi)}}\right)^{\dim_{S}(\Phi)}<\infty.$
\end{proof}


Let $g_{2}$ be as in Lemma \ref{g2lemma}. We now define $\Psi_{2}:\cup_{n=1}^{\infty}\A^n\to [0,\infty)$ by $\Psi_{2}(\a)=Diam(X_{\a})g_{2}(|\a|).$ Since $g_{2}(n)\leq g(n)$ for all $n$ it follows that $W_{\Phi}(\Psi_{2})\subset W_{\Phi}(\Psi)$. Therefore to prove Statement $2$ of Theorem \ref{Main theorem} it is sufficient to show that $\mu(W_{\Phi}(\Psi_{2}))=1.$

Let $\c\in \cup_{n=1}^{\infty}\A^n$ be arbitrary and fixed. We will show that 
\begin{equation}
\label{WTS}
\m([\c]\cap \pi^{-1}(W_{\Phi}(\Psi_{2}))\geq c\cdot \m([\c])
\end{equation} for some $c$ independent of $\c$. Lemma \ref{Density lemma} then implies that $\m(\pi^{-1}(W_{\Phi}(\Psi_{2}))=1$. Since $\mu=\pi\m$ this implies that $\mu(W_{\Phi}(\Psi_{2}))=1.$ Therefore to complete our proof it suffices to show that \eqref{WTS} holds.

Let us fix an $N$ sufficiently large so that Lemma \ref{Important lemma} applies for all $n\geq N$ for our choice of $\epsilon.$ We may also assume that $N$ is sufficiently large so that
\begin{equation}
\label{Anna1}
\frac{1}{n^{2/\dim_{S}(\Phi)}}<e^{k_n(-\chi_{\Phi,\p}-\epsilon)}
\end{equation}
for $n\geq N$, and so that there exists $\gamma\in(1,2)$ for which 
\begin{equation}
\label{Anna2}
e^{(h_{\p}+\epsilon)k_{n}}\leq n^{\gamma}
\end{equation}for $n\geq N$. The existence of $\gamma,$ and the fact that \eqref{Anna1} and \eqref{Anna2} are satisfied for $n$ sufficiently large, follows from \eqref{technical1}, \eqref{technical3} and the fact $\dim_{S}(\Phi)=\frac{h_{\p}}{\chi_{\Phi,\p}}.$.

 Let $n\geq N$, for each $\a\in Good(n,\epsilon)$ and $l\in W_{\a}$ we consider the ball $$B\left(\pi(\c a_{1}\ldots a_{l}(a_{l+1}\ldots a_{n})^{\infty}),Diam(X_{\c \a})g_{2}(|\c|+n)\right).$$ If $g_{2}(|\c |+n)\neq 0$ then there exists $h_{\a,l}\in\mathbb{N}$ and $l+1\leq j_{\a,l}\leq n$ such that 
\begin{equation}
\label{inclusion}
X_{\c a_{1}\ldots a_{l}(a_{l+1}\ldots a_{n})^{h_{\a,l}}a_{l+1}\ldots a_{j_{\a,l}}}\subseteq B\left(\pi(\c a_{1}\ldots a_{l}(a_{l+1}\ldots a_{n})^{\infty}),Diam(X_{\c \a})g_{2}(|\c |+n)\right)
\end{equation} and 
\begin{equation}
\label{Comparable diameter}
\min_{a\in \A}r_{a}\cdot Diam(X_{\c \a})g_{2}(|\c|+n)\leq Diam(X_{\c a_{1}\ldots a_{l}(a_{l+1}\ldots a_{n})^{h_{\a,l}}a_{l+1}\ldots a_{j_{\a,l}}})<    Diam(X_{\c \a})g_{2}(|\c |+n).
\end{equation} It follows from the fact $\p=(r_{\a}^{\dim_{S}(\Phi)})$ and $\m$ is the Bernoulli measure on $\A^{\N}$ corresponding to $\p$, that for any word $\a\in \cup_{n=1}^{\infty}\A^n$ we have 
\begin{equation}
\label{measure diameter}
\m([\a])\asymp Diam(X_{\a})^{\dim_{S}(\Phi)}.
\end{equation}Combining \eqref{Comparable diameter} together with \eqref{measure diameter} we can deduce that
\begin{equation}
\label{comparable measure}
\m([\c a_{1}\ldots a_{l}(a_{l+1}\ldots a_{n})^{h_{\a,l}}a_{l+1}\ldots a_{j_{\a,l}}])\asymp \m([\c \a])g_{2}(|\c |+n)^{\dim_{S}(\Phi)}.
\end{equation}
We will use the cylinder sets $[\c a_{1}\ldots a_{l}(a_{l+1}\ldots a_{n})^{h_{\a,l}}a_{l+1}\ldots a_{j_{\a,l}}]$ to prove that \eqref{WTS} holds. 
Before doing that it is useful to prove some properties of the parameters $h_{\a,l}$ and $j_{\a,l}$. 
 \begin{lemma}
 	\label{boring1}
 Let $n\geq N$ be such that $g_{2}(|\c |+n)\neq 0,$ and let $\a\in Good(n,\epsilon)$ and $l\in W_{\a}.$ For $h_{\a,l}$ and $j_{\a,l}$ as defined above, if $h_{\a,l}=1$ then $j_{\a,l}>l+k_{n}.$
 \end{lemma}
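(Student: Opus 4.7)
The plan is to argue by contradiction. Suppose that $h_{\a,l} = 1$ and yet $j_{\a,l} \leq l + k_n$. Under the hypothesis $h_{\a,l}=1$, the word under consideration is $\c a_1 \ldots a_l a_{l+1} \ldots a_n a_{l+1} \ldots a_{j_{\a,l}}$, so its associated contraction ratio satisfies
$$Diam(X_{\c a_1 \ldots a_l (a_{l+1}\ldots a_n)^{h_{\a,l}} a_{l+1}\ldots a_{j_{\a,l}}}) = Diam(X_{\c \a}) \cdot \prod_{i=l+1}^{j_{\a,l}} r_{a_i}.$$
The upper bound in \eqref{Comparable diameter} then yields $\prod_{i=l+1}^{j_{\a,l}} r_{a_i} < g_2(|\c|+n)$.

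Next, I would exploit that each $r_{a_i} \in (0,1)$, so removing factors from the product only enlarges it; using $j_{\a,l} \leq l+k_n$ together with the lower bound in item $2c$ of Lemma \ref{Important lemma} (valid because $l \in W_{\a}$), I obtain
$$\prod_{i=l+1}^{j_{\a,l}} r_{a_i} \;\geq\; \prod_{i=1}^{k_n} r_{a_{l+i}} \;\geq\; e^{k_n(-\chi_{\Phi,\p}-\epsilon)}.$$

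The final step is to derive the contradiction using the decay rate built into $g_2$. Because $g_2(|\c|+n) \neq 0$ by assumption, we have $g_2(|\c|+n) = g_1(|\c|+n) \leq (|\c|+n)^{-2/\dim_{S}(\Phi)} \leq n^{-2/\dim_{S}(\Phi)}$. Applying \eqref{Anna1} (valid since $n \geq N$), this gives
$$g_2(|\c|+n) \;\leq\; \frac{1}{n^{2/\dim_{S}(\Phi)}} \;<\; e^{k_n(-\chi_{\Phi,\p}-\epsilon)},$$
which contradicts the chain of inequalities derived above. Hence $j_{\a,l} > l+k_n$, as claimed.

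There is no genuine obstacle here: the statement is essentially a bookkeeping check that the parameters chosen for the inclusion \eqref{inclusion} cannot be too small, and the proof is really just a careful combination of the Lyapunov control in item $2c$ of Lemma \ref{Important lemma} with the carefully engineered upper bound on $g_2$ provided by Lemma \ref{g1lemma} and the parameter choice \eqref{Anna1}. The only thing to be a little careful about is ensuring the bound on $g_2(|\c|+n)$ is compared against $k_n$ (the $k$ corresponding to $n$) rather than $k_{|\c|+n}$; this is exactly why \eqref{Anna1} is stated in terms of $n$.
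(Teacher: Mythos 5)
Your proof is correct and uses exactly the same ingredients as the paper's: the upper bound in \eqref{Comparable diameter}, the decay bound $g_2(|\c|+n)\leq n^{-2/\dim_S(\Phi)}$ from Lemmas \ref{g1lemma} and \ref{g2lemma}, the choice of $N$ in \eqref{Anna1}, and item $2c$ of Lemma \ref{Important lemma}. The only difference is cosmetic — you phrase it as a proof by contradiction, while the paper concludes directly that $\prod_{i=1}^{j_{\a,l}-l}r_{a_{l+i}}<\prod_{i=1}^{k_n}r_{a_{l+i}}$ forces $j_{\a,l}>l+k_n$.
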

\begin{proof}
If $h_{\a,l}=1$ then $$Diam(X_{\c a_{1}\ldots a_{l}(a_{l+1}\ldots a_{n})^{h_{\a,l}}a_{l+1}\ldots a_{j_{\a,l}}})\leq   Diam(X_{\c \a})g_{2}(|\c |+n)$$ implies $$\prod_{i=1}^{j_{\a,l}-l}r_{a_{l+i}}\leq g_{2}(|\c |+n).$$ By Lemma \ref{g1lemma} and Lemma \ref{g2lemma} we know that $g_{2}(|\c|+n)\leq n^{-2/\dim_{S}(\Phi)}$. Therefore
\begin{equation}
\label{Mario}
\prod_{i=1}^{j_{\a,l}-l}r_{a_{l+i}}\leq  \frac{1}{n^{2/\dim_{S}(\Phi)}}.
\end{equation} Importantly, by $2c$ from Lemma \ref{Important lemma} we know that 
\begin{equation}
\label{Luigi}
\prod_{i=1}^{k_{n}}r_{a_{l+i}}\geq e^{k_n(-\chi_{\Phi,\p}-\epsilon)}.
\end{equation}
 Equation \eqref{Anna1} states that  $$\frac{1}{n^{2/\dim_{S}(\Phi)}}<e^{k_n(-\chi_{\Phi,\p}-\epsilon)}$$ for $n\geq N.$ It follows therefore from \eqref{Mario} and \eqref{Luigi} that $$\prod_{i=1}^{j_{\a,l}-l}r_{a_{l+i}}< \prod_{i=1}^{k_{n}}r_{a_{l+i}}.$$ Therefore we must have $j_{\a,l}> l+k_{n}$. 
 \end{proof}
 If we combine 2b. from Lemma \ref{Important lemma} together with Lemma \ref{boring1}, we may conclude the following lemma.

\begin{lemma}
\label{Disjoint lemma}
Assume that $n\geq N$ is such that $g_{2}(|\c|+n)\neq 0$ and let $\a\in Good(n,\epsilon)$. If $l,l'\in W_{\a}$ and $l\neq l'$ then $$\c a_{1}\ldots a_{l}(a_{l+1}\ldots a_{n})^{h_{\a,l}}a_{l+1}\ldots a_{j_{\a,l}}\neq \c a_{1}\ldots a_{l'}(a_{l'+1}\ldots a_{n})^{h_{\a,l'}}a_{l'+1}\ldots a_{j_{\a,l'}}.$$
\end{lemma}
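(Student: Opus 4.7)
The plan is a proof by contradiction: assume the two concatenated finite words are identical and exhibit a position at which they in fact disagree. Without loss of generality take $l<l'$.

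First I would observe that both words necessarily agree on their initial segment of length $|\c|+n$, since this segment equals $\c a_{1}\ldots a_{n}$ in each case (for the second word, the first copy of the periodic block $(a_{l'+1}\ldots a_{n})$ starts at position $|\c|+l'+1$ and fills through position $|\c|+n$). So any discrepancy must occur past position $|\c|+n$, and the natural window to inspect is positions $|\c|+n+1,\ldots,|\c|+n+k_{n}$.

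I would then verify that both words actually extend into this window. For the first word, if $h_{\a,l}\geq 2$ then the second copy of $(a_{l+1}\ldots a_{n})$ extends the word through position $|\c|+2n-l$, and since $l\in W_{\a}$ we have $l\leq n-k_{n}$, so $|\c|+2n-l\geq |\c|+n+k_{n}$; if instead $h_{\a,l}=1$, then Lemma \ref{boring1} provides $j_{\a,l}>l+k_{n}$, so the tail $a_{l+1}\ldots a_{j_{\a,l}}$ carries the word through at least position $|\c|+n+k_{n}$. The identical reasoning applies to the second word via $l'$, $h_{\a,l'}$, and $j_{\a,l'}$.

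Finally I would identify the characters in the window. Whether $h_{\a,l}\geq 2$ (so position $|\c|+n+1$ starts a fresh copy of the periodic block) or $h_{\a,l}=1$ (so position $|\c|+n+1$ starts the tail), the substring of the first word occupying positions $|\c|+n+1,\ldots,|\c|+n+k_{n}$ is exactly $a_{l+1}\ldots a_{l+k_{n}}$. By the same case split the corresponding substring of the second word is $a_{l'+1}\ldots a_{l'+k_{n}}$. By property 2b of Lemma \ref{Important lemma}, these two length-$k_{n}$ subwords of $\a$ are unequal, contradicting the assumed equality of the concatenated words.

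I do not expect a serious obstacle: the content is simply to locate the correct observation window just past position $|\c|+n$ and to combine property 2b of Lemma \ref{Important lemma} with Lemma \ref{boring1}. The only delicate point is the positional bookkeeping, in particular making sure that the window $|\c|+n+1,\ldots,|\c|+n+k_{n}$ lies inside both words in both of the subcases $h_{\a,\cdot}=1$ and $h_{\a,\cdot}\geq 2$.
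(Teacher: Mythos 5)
Your proposal is correct and follows exactly the route the paper has in mind: the paper merely asserts that the lemma follows from combining item 2b of Lemma \ref{Important lemma} with Lemma \ref{boring1}, without spelling out the argument, and your proof fills in precisely those details (using Lemma \ref{boring1} to guarantee the words extend through positions $|\c|+n+1,\ldots,|\c|+n+k_n$ in the $h_{\a,\cdot}=1$ case, and then invoking 2b to distinguish the blocks $a_{l+1}\ldots a_{l+k_n}$ and $a_{l'+1}\ldots a_{l'+k_n}$ appearing there).
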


\begin{lemma}
	\label{boring2}
Let $n\geq N$ be such that $g_{2}(|\c |+n)\neq 0,$ and let $\a\in Good(n,\epsilon)$ and $l\in W_{\a}.$ There exists $C=C(\c)$ such that for $h_{\a,l}$ and $j_{\a,l}$ as defined above, we have
$$(n-l)(h_{\a,l}-1)+ j_{\a,l}-l< C\log n.$$
\end{lemma}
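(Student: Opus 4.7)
The plan is to unfold the diameter identity implicit in \eqref{Comparable diameter}, take logarithms, and use the fact that $g_{2}(|\c|+n)$ is bounded below by a polynomial in $n$ whenever it is nonzero.

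First I would write out the diameter of the long cylinder explicitly: since the maps are similarities,
\begin{equation*}
Diam(X_{\c a_{1}\ldots a_{l}(a_{l+1}\ldots a_{n})^{h_{\a,l}}a_{l+1}\ldots a_{j_{\a,l}}})
= Diam(X_{\c\a})\cdot\left(\prod_{i=l+1}^{n}r_{a_{i}}\right)^{h_{\a,l}-1}\prod_{i=l+1}^{j_{\a,l}}r_{a_{i}}.
\end{equation*}
Combining with the lower bound in \eqref{Comparable diameter} and cancelling $Diam(X_{\c\a})$ yields
\begin{equation*}
\left(\prod_{i=l+1}^{n}r_{a_{i}}\right)^{h_{\a,l}-1}\prod_{i=l+1}^{j_{\a,l}}r_{a_{i}}\;\geq\;(\min_{a\in\A}r_{a})\cdot g_{2}(|\c|+n).
\end{equation*}

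Next I would take $-\log$ of both sides. Writing $r_{\max}=\max_{a\in\A}r_{a}<1$, each factor satisfies $-\log r_{a_{i}}\geq -\log r_{\max}>0$, so the left hand side is bounded below by $\bigl[(n-l)(h_{\a,l}-1)+(j_{\a,l}-l)\bigr]\cdot(-\log r_{\max})$. Therefore
\begin{equation*}
(n-l)(h_{\a,l}-1)+(j_{\a,l}-l)\;\leq\;\frac{-\log(\min_{a\in\A}r_{a})-\log g_{2}(|\c|+n)}{-\log r_{\max}}.
\end{equation*}

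Now I would invoke the definition of $g_{2}$ from Lemma \ref{g2lemma}: since $g_{2}(|\c|+n)\neq 0$, we have $g_{2}(|\c|+n)\geq (|\c|+n)^{-4/\dim_{S}(\Phi)}$, hence $-\log g_{2}(|\c|+n)\leq \frac{4}{\dim_{S}(\Phi)}\log(|\c|+n)$. For $n\geq N$ (enlarging $N$ if necessary depending on $|\c|$) we have $\log(|\c|+n)\leq 2\log n$, so the right hand side is bounded by $C(\c)\log n$ for a constant $C(\c)$ depending only on $\c$, $r_{\min}$, $r_{\max}$, and $\dim_{S}(\Phi)$. This gives the desired inequality.

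There is really no obstacle in this lemma: the mechanism is just that $g_{2}$ decays at most polynomially while the IFS contracts exponentially, so the length of any word whose cylinder comes within a factor comparable to $g_{2}(|\c|+n)$ of $Diam(X_{\c\a})$ can only be logarithmically longer than $n$. The only thing to be careful about is keeping track of the dependence on $\c$ through $|\c|$, which is absorbed harmlessly into the constant $C(\c)$.
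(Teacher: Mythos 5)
Your proof is correct and follows essentially the same route as the paper: cancel $Diam(X_{\c\a})$ in \eqref{Comparable diameter}, bound the resulting product of contraction ratios by $(\max_a r_a)^{(n-l)(h_{\a,l}-1)+j_{\a,l}-l}$, invoke $g_2(|\c|+n)\geq (|\c|+n)^{-4/\dim_S(\Phi)}$, and take logarithms. The only minor stylistic point is that rather than ``enlarging $N$'' you should absorb the finitely many values $N\leq n<|\c|$ into the constant $C(\c)$, since $N$ was fixed earlier and is used elsewhere; this does not affect the argument.
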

\begin{proof}
If $g_{2}(|\c|+n)\neq 0$ then by Lemma \ref{g2lemma} we know that it must satisfy $g_{2}(|\c|+n)\geq \frac{1}{(|\c|+n)^{4/\dim_{S}(\Phi)}}.$ Equation \eqref{Comparable diameter} then implies that if $g_{2}(|\c|+n)\neq 0$ then $$\frac{\min_{\a\in A}r_{a}}{(|\c|+n)^{4/\dim_{S}(\Phi)}}\leq \left(\prod_{i=1}^{n-l}r_{a_{l+i}}\right)^{h_{\a,l}-1} \cdot \prod_{i=1}^{j_{\a,l}-l}r_{a_{l+i}}.$$ This in turn implies that 
$$\frac{\min_{\a\in A}r_{a}}{(|\c|+n)^{4/\dim_{S}(\Phi)}}\leq \left(\max_{a\in A}r_{a}\right)^{(n-l)(h_{\a,l}-1)+ j_{\a,l}-l}.$$ Taking logarithms and then manipulating the resulting expression, one can show that the above implies that there exists $C=C(\c)$ such that 
\begin{equation*}
(n-l)(h_{\a,l}-1)+ j_{\a,l}-l< C\log n.
\end{equation*}
\end{proof}

For each $n\geq N$ such that $g_{2}(|\c|+n)\neq 0$ we let $$E_{n}:=\bigcup_{a\in Good(n,\epsilon)}\bigcup_{l\in W_{\a}}[\c a_{1}\ldots a_{l}(a_{l+1}\ldots a_{n})^{h_{\a,l}}a_{l+1}\ldots a_{j_{\a,l}}].$$ Lemma \ref{Disjoint lemma} tells us that any pair of cylinder sets in this union are disjoint. If $n\geq N$ is such that $g_{2}(|\c|+n)= 0$ then set $E_{n}=\emptyset$. Importantly \eqref{inclusion} implies that $$\limsup_{n\to\infty} E_{n}\subset [\c]\cap \pi^{-1}(W_{\Phi}(\Psi_{2})).$$  Therefore to prove \eqref{WTS} it is sufficient to show that 
\begin{equation}
\label{WTS2}
\m\left(\limsup_{n\to\infty} E_{n}\right)\gg \m([\c]).
\end{equation}We will prove that \eqref{WTS2} holds using Lemma \ref{Erdos lemma}. Before that it is necessary to check that the hypothesis of this lemma are satisfied.

\begin{lemma}
	\label{Divergence lemma}
For $n\geq N$ we have	$\m(E_n)\asymp m([\c])\cdot n\cdot g_{2}(|\c|+n)^{\dim_{S}(\Phi)}.$
\end{lemma}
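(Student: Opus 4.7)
The plan is to reduce the computation of $\m(E_n)$ to a sum over the cylinders used in the definition of $E_n$, then estimate each piece using the results that have already been assembled.

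First, observe that if $g_2(|\c|+n) = 0$ then $E_n = \emptyset$ by construction, and the claimed comparison is trivial (both sides vanish). So for the remainder assume $g_2(|\c|+n) \neq 0$. Lemma \ref{Disjoint lemma} tells us that the cylinders $[\c a_1\cdots a_l (a_{l+1}\cdots a_n)^{h_{\a,l}} a_{l+1}\cdots a_{j_{\a,l}}]$ indexed by $\a \in Good(n,\epsilon)$ and $l \in W_\a$ are pairwise disjoint, so
\[
\m(E_n) = \sum_{\a \in Good(n,\epsilon)} \sum_{l \in W_\a} \m\bigl([\c a_1\cdots a_l (a_{l+1}\cdots a_n)^{h_{\a,l}} a_{l+1}\cdots a_{j_{\a,l}}]\bigr).
\]

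Next I would apply \eqref{comparable measure} to replace each summand by $\m([\c\a]) \cdot g_2(|\c|+n)^{\dim_S(\Phi)}$ up to multiplicative constants. Since $\m$ is a Bernoulli measure on $\A^{\N}$, it factorises as $\m([\c\a]) = \m([\c]) \m([\a])$, and hence
\[
\m(E_n) \asymp \m([\c]) \cdot g_2(|\c|+n)^{\dim_S(\Phi)} \cdot \sum_{\a \in Good(n,\epsilon)} \#W_\a \cdot \m([\a]).
\]

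To conclude, I would control the remaining sum using Lemma \ref{Important lemma}. For the lower bound, item $2(a)$ gives $\#W_\a \geq \lfloor n/20 \rfloor \gg n$, while item $1$ gives $\sum_{\a \in Good(n,\epsilon)} \m([\a]) \geq 7/64$, so the double sum is $\gg n$. For the upper bound, we trivially have $\#W_\a \leq n - k_n + 1 \leq n$ and $\sum_{\a \in Good(n,\epsilon)} \m([\a]) \leq 1$, so the double sum is $\ll n$. Combining these bounds yields $\m(E_n) \asymp \m([\c]) \cdot n \cdot g_2(|\c|+n)^{\dim_S(\Phi)}$, as desired.

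There is no real obstacle here: the statement is essentially a bookkeeping consequence of the disjointness provided by Lemma \ref{Disjoint lemma}, the measure-diameter comparison \eqref{measure diameter}, and the quantitative control on $Good(n,\epsilon)$ and $W_\a$ from Lemma \ref{Important lemma}. The only subtle point is remembering to invoke the product structure of $\m$ so that the factor $\m([\c])$ cleanly separates from the rest of the sum.
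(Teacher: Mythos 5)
Your argument is correct and follows essentially the same route as the paper's own proof: after checking the trivial case $g_2(|\c|+n)=0$, use the disjointness from Lemma \ref{Disjoint lemma} to turn the measure of the union into a sum, apply the measure comparison \eqref{comparable measure} to each cylinder, factor out $\m([\c])$ via the product structure of the Bernoulli measure, and conclude via the cardinality bound on $W_\a$ and the measure bound on $Good(n,\epsilon)$ from Lemma \ref{Important lemma}. The only cosmetic difference is that the paper passes explicitly through diameters via \eqref{measure diameter} and \eqref{Comparable diameter} before reassembling, whereas you invoke their combination \eqref{comparable measure} directly; the substance is identical.
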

\begin{proof}
This lemma is obviously true if $n$ is such that $g_{2}(|\c|+n)=0$. As such we restrict our attention to those $n\geq N$ for which $g_{2}(|\c|+n)\neq 0$. Recall that by Lemma \ref{Disjoint lemma}, for distinct $l,l'\in W_{\a}$ we have $\c a_{1}\ldots a_{l}(a_{l+1}\ldots a_{n})^{h_{\a,l}}a_{l+1}\ldots a_{j_{\a,l}}\neq \c a_{1}\ldots a_{l'}(a_{l'+1}\ldots a_{n})^{h_{\a,l'}}a_{l'+1}\ldots a_{j_{\a,l'}}$. Therefore we have
\begin{align*}
\m(E_n)&=\m\left(\bigcup_{\a\in Good(n,\epsilon)}\bigcup_{l\in W_{\a}}[\c a_{1}\ldots a_{l}(a_{l+1}\ldots a_{n})^{h_{\a,l}}a_{l+1}\ldots a_{j_{\a,l}}]\right)\\
&=\sum_{\a\in Good(n,\epsilon)}\sum_{l\in W_{\a}}\m([\c a_{1}\ldots a_{l}(a_{l+1}\ldots a_{n})^{h_{\a,l}}a_{l}\ldots a_{j_{\a,l}}])\\
&\stackrel{\eqref{measure diameter}}{\asymp}\sum_{\a\in Good(n,\epsilon)}\sum_{l\in W_{\a}}Diam(X_{\c a_{1}\ldots a_{l}(a_{l+1}\ldots a_{n})^{h_{\a,l}}a_{l+1}\ldots a_{j_{\a,l}}})
^{\dim_{S}(\Phi)}\\
&\stackrel{\eqref{Comparable diameter}}{\asymp} \sum_{\a\in Good(n,\epsilon)}\sum_{l\in W_{\a}}(Diam(X_{c\a})g_{2}(|\c |+n))^{\dim_{S}(\Phi)}\\
&\stackrel{Lemma\, \ref{Important lemma}}{\asymp}  n\cdot g_{2}(|\c|+n)^{\dim_{S}(\Phi)}\sum_{\a\in Good(n,\epsilon)}Diam(X_{\c\a})^{\dim_{S}(\Phi)}\\
&\stackrel{\eqref{measure diameter}}{\asymp}  n\cdot g_{2}(|\c|+n)^{\dim_{S}(\Phi)}\sum_{\a\in Good(n,\epsilon)}\m([\c\a])\\
&= \m([\c])\cdot n\cdot g_{2}(|\c|+n)^{\dim_{S}(\Phi)}\sum_{\a\in Good(n,\epsilon)}\m([\a])\\
&\stackrel{Lemma\, \ref{Important lemma}}{\asymp} \m([\c])\cdot n\cdot g_{2}(|\c|+n)^{\dim_{S}(\Phi)}.
\end{align*}
\end{proof}


It follows from Lemma \ref{g2lemma} and Lemma \ref{Divergence lemma} that $\sum_{n=1}^{\infty}\m(E_n)=\infty$. So our sequence of sets $(E_n)$ satisfies the hypothesis of Lemma \ref{Erdos lemma}. To complete our proof we need to get good upper bounds for $\m(E_n\cap E_m)$. We restrict our attention to those $n$ and $m$ satisfying $n<m,$ $g_2(|\c|+n)\neq 0,$ and $g_{2}(|\c|+m)\neq 0$. For these $n$ and $m$ we see that
\begin{align*}
\m(E_n\cap E_{m})=\sum_{\a\in Good(n,\epsilon)}\sum_{l\in W_{\a}}\m([\c a_{1}\ldots a_{l}(a_{l+1}\ldots a_{n})^{h_{\a,l}}a_{l+1}\ldots a_{j_{\a,l}}]\cap E_{m}).
\end{align*} The following proposition gives good upper bounds for the terms in this summand. The parameter $C$ in the statement of this proposition is the same $C$ as in Lemma \ref{boring2}.

\begin{prop}
	\label{Measureprop}
Let $n,m\geq N$ be such that $n<m$, $g_2(|\c|+n)\neq 0,$ and $g_{2}(|\c|+m)\neq 0.$ Then for $\a\in Good(n,\epsilon)$ and $l\in W_{\a}$ the following holds:
\begin{enumerate}
	\item If $n<m\leq n+C\log n$ then 
	\begin{align*}
	&\m([\c a_{1}\ldots a_{l}(a_{l+1}\ldots a_{n})^{h_{\a,l}}a_{l+1}\ldots a_{j_{\a,l}}]\cap E_{m})\\
	\ll &\m([\c \a])e^{(h_{\p}+\epsilon) k_{m}}g_{2}(|\c|+n)^{\dim_{S}(\Phi)} g_{2}(|\c|+m)^{\dim_{S}(\Phi)}	+ \m([\c \a])(\max_{a\in \A}p_{a})^{m-n}g_{2}(|\c|+m)^{\dim_{S}(\Phi)}\\
	+&m\cdot \m([\c \a]) g_{2}(|\c|+m)^{\dim_{S}(\Phi)}g_{2}(|\c|+n)^{\dim_{S}(\Phi)}.
	\end{align*}
	\item If $m> C\log n$ then $$\m([\c a_{1}\ldots a_{l}(a_{l+1}\ldots a_{n})^{h_{\a,l}}a_{l+1}\ldots a_{j_{\a,l}}]\cap E_{m})\ll m\cdot \m([\c \a]) g_{2}(|\c|+m)^{\dim_{S}(\Phi)}g_{2}(|\c|+n)^{\dim_{S}(\Phi)}.$$
\end{enumerate}
\end{prop}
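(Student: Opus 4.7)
My plan is to expand $[\c\mathbf{u}]\cap E_m$, where $\mathbf{u}$ abbreviates $a_1\ldots a_l(a_{l+1}\ldots a_n)^{h_{\a,l}}a_{l+1}\ldots a_{j_{\a,l}}$, into a disjoint sum over the cylinders $[\c\mathbf{v}_{\b,l'}]$ comprising $E_m$ (with $\mathbf{v}_{\b,l'}$ the analogous word for $(\b,l')$). Disjointness comes from Lemma~\ref{Disjoint lemma}, and since two cylinders in $\A^{\mathbb{N}}$ are either disjoint or one contains the other, each nonempty summand $\m([\c\mathbf{u}]\cap[\c\mathbf{v}_{\b,l'}])$ equals the measure of the cylinder of the longer word. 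Throughout I would use the length estimates $L_n := |\mathbf{u}| < n+C\log n$ (Lemma~\ref{boring2}) and $|\mathbf{v}_{\b,l'}| \geq m$.

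For case 2 ($m>n+C\log n$) the chain $L_n < m \leq |\mathbf{v}_{\b,l'}|$ forces $\mathbf{u}$ to be a prefix of $\mathbf{v}_{\b,l'}$, equivalently a prefix of $\b$, in any nonempty intersection. Bounding $|W_\b|\leq m$, applying \eqref{comparable measure} to the contributing cylinders, and telescoping via the Bernoulli identity $\sum_{\b:\mathbf{u}\text{ prefix of }\b}\m([\c\b])=\m([\c\mathbf{u}])$ together with \eqref{comparable measure} for $[\c\mathbf{u}]$ produces the stated bound in one line.

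Case 1 ($n<m\leq n+C\log n$) is the substantive part, since both $|\mathbf{v}_{\b,l'}|\geq L_n$ and $|\mathbf{v}_{\b,l'}|<L_n$ are now possible. I would split by configuration. If $L_n\leq m$ and $\mathbf{u}$ is a prefix of $\b$, the case~2 argument applies verbatim and yields the third term. If $L_n>m$ and $\mathbf{u}$ is a prefix of $\mathbf{v}_{\b,l'}$, then $\b=u_1\ldots u_m$ is forced and the matching requirement $b_{l'+1}\ldots b_{l'+(L_n-m)}=u_{m+1}\ldots u_{L_n}$ restricts $l'\in W_\b$. When $L_n-m\geq k_m$, property~2b of Lemma~\ref{Important lemma} admits at most one such $l'$; combining this with $\m([\c\b])\leq\m([\c\a])(\max_a p_a)^{m-n}$ and $\m([\c\mathbf{v}_{\b,l'}])\asymp\m([\c\b])g_2(|\c|+m)^{\dim_S(\Phi)}$ gives the second term. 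When $L_n-m<k_m$, the combination of property~2b (distinct length-$k_m$ windows across $W_\b$), property~2c (each such window has $\m$-measure at least $e^{-(h_\p+\epsilon)k_m}$), and the trivial identity $\sum_{\mathbf{w}\in\A^{k_m}}\m([\mathbf{w}])=1$ yields the refined count
\[
\#\bigl\{l'\in W_\b : b_{l'+1}\ldots b_{l'+(L_n-m)}=u_{m+1}\ldots u_{L_n}\bigr\}\;\leq\; e^{(h_\p+\epsilon)k_m}\prod_{i=m+1}^{L_n}p_{u_i},
\]
after which the telescoping $\m([\c\b])\cdot\prod_{i=m+1}^{L_n}p_{u_i}=\m([\c\mathbf{u}])\asymp\m([\c\a])g_2(|\c|+n)^{\dim_S(\Phi)}$ (via \eqref{comparable measure}) produces the first term.

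The remaining configuration, where $\mathbf{v}_{\b,l'}$ is itself a strict prefix of $\mathbf{u}$, is the principal obstacle. Here both $\mathbf{u}$ (with period $n-l$ past position $l$) and $\mathbf{v}_{\b,l'}$ (with period $m-l'$ past position $l'$) must coexist on their overlap, so a Fine--Wilf-type combinatorial rigidity drastically restricts the admissible $l'$; the same entropy mechanism from 2c then absorbs the surviving contribution into the first term. This interplay of the distinctness (2b), the entropy control (2c), and the two-period rigidity is precisely where the hypothesis $h_\p<-2\log\sum_a p_a^2$ enters the proof, via \eqref{technical1}--\eqref{Anna2} and the chosen $\epsilon$.
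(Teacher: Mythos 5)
Your decomposition of $[\c\mathbf{u}]\cap E_m$ and the treatment of the configurations where $\mathbf{u}$ is a (weak) prefix of $\mathbf{v}_{\b,l'}$ are correct and essentially match the paper's argument; in particular, the sub-split on whether $L_n - m \geq k_m$ or $< k_m$, with item~2b giving uniqueness in the former and the entropy bound from item~2c giving the count $e^{(h_\p+\epsilon)k_m}\prod_{i=m+1}^{L_n}p_{u_i}$ in the latter, is exactly the content of the paper's display \eqref{intersection count}, and the Case~2 argument is identical.

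The gap is in what you call the ``remaining configuration,'' where $\mathbf{v}_{\b,l'}$ is a strict prefix of $\mathbf{u}$. You invoke a Fine--Wilf two-period rigidity argument, do not carry it out, and assert that it feeds the surviving contribution into the \emph{first} term of the bound. Both the mechanism and the destination are off. First, no combinatorics-on-words is needed: since $|\mathbf{v}_{\b,l'}| > m+k_m$ always (Lemma~\ref{boring1} applied at level $m$), the configuration forces $L_n > m+k_m$, and then $\mathbf{u}[m+1,\dots,m+k_m]$ is a \emph{fixed} word of length $k_m$ that must equal $b_{l'+1}\cdots b_{l'+k_m}$. Item~2b of Lemma~\ref{Important lemma} then gives at most one admissible $l'\in W_\b$ -- the same one-line argument you used in your $L_n - m \geq k_m$ sub-case, with no periodicity considerations. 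Second, the contribution of this single $l'$ is bounded by $\m([\c\mathbf{v}_{\b,l'}]) \asymp \m([\c\b])g_2(|\c|+m)^{\dim_S(\Phi)} \leq \m([\c\a])(\max_a p_a)^{m-n}g_2(|\c|+m)^{\dim_S(\Phi)}$ via \eqref{expdecay}, which is the \emph{second} term, not the first. Note that it cannot be absorbed into the first term: $\m([\c\mathbf{u}]\cap[\c\mathbf{v}_{\b,l'}]) = \m([\c\mathbf{u}]) \asymp \m([\c\a])g_2(|\c|+n)^{\dim_S(\Phi)}$ here, while the first term carries the extra factor $e^{(h_\p+\epsilon)k_m}g_2(|\c|+m)^{\dim_S(\Phi)} \leq m^{\gamma - 2} \to 0$ by \eqref{Anna2} and $g_2(|\c|+m)^{\dim_S(\Phi)} \leq m^{-2}$, so the first term is asymptotically \emph{smaller}. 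The paper avoids having to notice any of this by never distinguishing which of $\mathbf{u}$, $\mathbf{v}_{\b,l'}$ is longer: it replaces each $[\c\mathbf{v}_{\b,l'}]$ by the enclosing cylinder $[\c b_1\cdots b_m b_{l'+1}\cdots b_{l'+k_m}]$, uses disjointness of these across $l'$, and runs a single measure-comparison argument yielding the count bound \eqref{intersection count} whose ``$+1$'' precisely encodes the case $L_n \geq m + k_m$. Finally, the hypothesis $h_\p < -2\log\sum_a p_a^2$ is not what makes this case manageable; it is used downstream in Proposition~\ref{workingprop} (through \eqref{Anna1} in Lemma~\ref{boring1} and through \eqref{Anna2} to ensure $\gamma < 2$ so the B1 term is summable), not in any word-combinatorics at this stage.
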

	
\begin{proof}
We prove each statement separately.\\

\noindent \textbf{Proof of Statement 1.} Assume that $n<m\leq n+C\log n$.
Let $\a\in Good(n,\epsilon)$ and $l\in W_{\a}$. If $m\leq l +h_{\a,l}(n-l)+(j_{\a,l}-l)$ then at most one $\b\in Good(m,\epsilon)$ is such that $[\c \b]$ non-empty intersection with $[\c a_{1}\ldots a_{l}(a_{l+1}\ldots a_{n})^{h_{\a,l}}a_{l+1}\ldots a_{j_{\a,l}}].$ Let us assume that such a $\b$ exists. Otherwise $\m([\c a_{1}\ldots a_{l}(a_{l+1}\ldots a_{n})^{h_{\a,l}}a_{l+1}\ldots a_{j_{\a,l}}]\cap E_{m})=0$ and our upper bound holds trivially. In this case we see that
\begin{align*}
&\m([\c a_{1}\ldots a_{l}(a_{l+1}\ldots a_{n})^{h_{\a,l}}a_{l+1}\ldots a_{j_{\a,l}}]\cap E_{m})\\
=& \sum_{l'\in W_{\b}}\m([\c a_{1}\ldots a_{l}(a_{l+1}\ldots a_{n})^{h_{\a,l}}a_{l+1}\ldots a_{j_{\a,l}}]\cap [\c b_{1}\ldots b_{l'}(b_{l'+1}\ldots b_{m})^{h_{\b,l'}}b_{l'+1}\ldots b_{j_{\b,l'}}]).
\end{align*} 
Lemma \ref{boring1} implies that if $$\m([\c a_{1}\ldots a_{l}(a_{l+1}\ldots a_{n})^{h_{\a,l}}a_{l+1}\ldots a_{j_{\a,l}}]\cap [\c b_{1}\ldots b_{l'}(b_{l'+1}\ldots b_{m})^{h_{\b,l'}}b_{l'+1}\ldots b_{j_{\b,l'}}])\neq 0$$ then we must have $$[\c a_{1}\ldots a_{l}(a_{l+1}\ldots a_{n})^{h_{\a,l}}a_{l+1}\ldots a_{j_{\a,l}}]\cap [\c b_{1}\ldots b_{m}b_{l'+1}\ldots b_{l'+k_m}]\neq \emptyset.$$
By Lemma \ref{Important lemma} we know that for each $l'\in W_{\b}$ the cylinder set $[\c b_{1}\ldots b_{m}b_{l'+1}\ldots b_{l'+k_m}]$ satisfies $$\m([\c b_{1}\ldots b_{m}b_{l'+1}\ldots b_{l'+k_m}])\geq \m([\c\b])\cdot e^{k_{m}(-h_{\p}-\epsilon)}.$$ Therefore, by \eqref{comparable measure} and a measure argument we have
\begin{align}
\label{intersection count}
\#\{l'\in W_{b} :[&\c a_{1}\ldots a_{l}(a_{l+1}\ldots a_{n})^{h_{\a,l}}a_{l+1}\ldots a_{j_{\a,l}}]\cap [\c b_{1}\ldots b_{m}b_{l'+1}\ldots b_{l'+k_m}]\neq \emptyset\}\nonumber\\
&\ll \frac{\m([\c \a])g_{2}(|\c|+n)^{\dim_{S}(\Phi)}}{\m([\c \b]) e^{k_{m}(-h_{\p}-\epsilon)}}+1.
\end{align}
Applying the above observations we see that
\begin{align*}
&\sum_{l'\in W_{\b}}\m([\c a_{1}\ldots a_{l}(a_{l+1}\ldots a_{n})^{h_{\a,l}}a_{l+1}\ldots a_{j_{\a,l}}]\cap [\c b_{1}\ldots b_{l'}(b_{l'+1}\ldots b_{m})^{h_{\b,l'}}b_{l'+1}\ldots b_{j_{\b,l'}}])\\
\leq &\sum_{\stackrel{l'\in W_{\b}}{[\c a_{1}\ldots a_{l}(a_{l+1}\ldots a_{n})^{h_{\a,l}}a_{l+1}\ldots a_{j_{\a,l}}]\cap [\c b_{1}\ldots b_{m}b_{l'+1}\ldots b_{l'+k_m}]\neq \emptyset}}\m([\c b_{1}\ldots b_{l'}(b_{l'+1}\ldots b_{m})^{h_{\b,l'}}b_{l'+1}\ldots b_{j_{\b,l'}}])\\
\stackrel{\eqref{comparable measure}}\ll&\sum_{\stackrel{l'\in W_{\b}}{[\c a_{1}\ldots a_{l}(a_{l+1}\ldots a_{n})^{h_{\a,l}}a_{l+1}\ldots a_{j_{\a,l}}]\cap [\c b_{1}\ldots b_{m}b_{l'+1}\ldots b_{l'+k_m}]\neq \emptyset}} \m([\c\b])g_{2}(|\c|+m)^{\dim_{S}(\Phi)}\\
\stackrel{\eqref{intersection count}}{\ll} & \m([\c \a])e^{(h_{\p}+\epsilon)k_{m}}g_{2}(|\c|+n)^{\dim_{S}(\Phi)} g_{2}(|\c|+m)^{\dim_{S}(\Phi)}+\m([\c\b])g_{2}(|\c|+m)^{\dim_{S}(\Phi)}.
\end{align*}
Because $\b$ must have $\a$ as a prefix we see that 
\begin{equation}
\label{expdecay}
\m([\c\b])\leq \m([\c \a])(\max_{a\in \A}p_{a})^{m-n}.
\end{equation}
Substituting \eqref{expdecay} into the last line in the above, we have shown that if $m\leq l +h_{\a,l}(n-l)+(j_{\a,l}-l)$ then\footnote{In the proof of Statement $3$ from Theorem \ref{Main theorem} we know that $\p=(\A^{-1})_{a\in \A},$ and as such we can make more precise statements about the measure of cylinders. Indeed in the above we do not need to introduce the parameter $\epsilon$ and \eqref{intersection count} holds with $\epsilon=0$. This means that we can strengthen \eqref{Case1Abound} to $\m([\c a_{1}\ldots a_{l}(a_{l+1}\ldots a_{n})^{h_{\a,l}}a_{l+1}\ldots a_{j_{\a,l}}]\cap E_{m})\ll \m([\c \a])e^{h_{\p}k_{m}}g_{2}(|\c|+n)^{\dim_{S}(\Phi)} g_{2}(|\c|+m)^{\dim_{S}(\Phi)}+\m([\c \a])(\max_{a\in \A}p_{a})^{m-n}g_{2}(|\c|+m)^{\dim_{S}(\Phi)}.$ Which by the definition of $k_{m}$ implies $\m([\c a_{1}\ldots a_{l}(a_{l+1}\ldots a_{n})^{h_{\a,l}}a_{l+1}\ldots a_{j_{\a,l}}]\cap E_{m})\ll \m([\c \a])m g_{2}(|\c|+n)^{\dim_{S}(\Phi)} g_{2}(|\c|+m)^{\dim_{S}(\Phi)}+\m([\c \a])(\max_{a\in \A}p_{a})^{m-n}g_{2}(|\c|+m)^{\dim_{S}(\Phi)}.$ The rest of the proof follows identically.}
\begin{align}
\label{Case1Abound}
&m([\c a_{1}\ldots a_{l}(a_{l+1}\ldots a_{n})^{h_{\a,l}}a_{l+1}\ldots a_{j_{\a,l}}]\cap E_{m})\nonumber\\
\ll &\m([\c \a])e^{(h_{\p}+\epsilon)k_{m}}g_{2}(|\c|+n)^{\dim_{S}(\Phi)} g_{2}(|\c|+m)^{\dim_{S}(\Phi)}+ \m([\c \a])(\max_{a\in \A}p_{a})^{m-n}g_{2}(|\c|+m)^{\dim_{S}(\Phi)}.
\end{align}
Now suppose that $m>  l +h_{\a,l}(n-l)+(j_{\a,l}-l).$ In this case 
\begin{align}
\label{Hampster}
&\m([\c a_{1}\ldots a_{l}(a_{l+1}\ldots a_{n})^{h_{\a,l}}a_{l+1}\ldots a_{j_{\a,l}}]\cap E_{m})\nonumber\\
=& \sum_{\stackrel{\b\in Good(m,\epsilon)}{\b \textrm{ begins with }a_{1}\ldots a_{l}(a_{l+1}\ldots a_{n})^{h_{\a,l}}a_{l+1}\ldots a_{j_{\a,l}}}}\sum_{l'\in W_{\b}}\m([\c b_{1}\ldots b_{l'}(b_{l'+1}\ldots b_{m})^{h_{\b,l'}}b_{l'+1}\ldots b_{j_{\b,l'}}])\nonumber\\
\stackrel{\eqref{comparable measure}}{\ll}&\sum_{\stackrel{\b\in Good(m,\epsilon)}{\b \textrm{ begins with }a_{1}\ldots a_{l}(a_{l+1}\ldots a_{n})^{h_{\a,l}}a_{l+1}\ldots a_{j_{\a,l}}}}\sum_{l'\in W_{\b}}\m([\c \b])g_{2}(|\c|+m)^{\dim_{S}(\Phi)}\nonumber\\
\leq &m\cdot g_{2}(|\c|+m)^{\dim_{S}(\Phi)}\sum_{\stackrel{\b\in Good(m,\epsilon)}{\b \textrm{ begins with }a_{1}\ldots a_{l}(a_{l+1}\ldots a_{n})^{h_{\a,l}}a_{l+1}\ldots a_{j_{\a,l}}}}\m([\c \b])\nonumber\\
\leq & m\cdot g_{2}(|\c|+m)^{\dim_{S}(\Phi)}\sum_{\stackrel{\b\in \A^m}{\b \textrm{ begins with }a_{1}\ldots a_{l}(a_{l+1}\ldots a_{n})^{h_{\a,l}}a_{l+1}\ldots a_{j_{\a,l}}}}\m([\c \b])\nonumber\\
=& m\cdot g_{2}(|\c|+m)^{\dim_{S}(\Phi)}\cdot \m([\c a_{1}\ldots a_{l}(a_{l+1}\ldots a_{n})^{h_{\a,l}}a_{l+1}\ldots a_{j_{\a,l}}])\nonumber \\
\stackrel{\eqref{comparable measure}}{\ll}& m\cdot \m([\c \a]) g_{2}(|\c|+m)^{\dim_{S}(\Phi)}g_{2}(|\c|+n)^{\dim_{S}(\Phi)}.
\end{align}\\
Adding together the upper bounds obtained in \eqref{Case1Abound} and \eqref{Hampster} we obtain the desired upper bound which holds for all $m$ satisfying $n<m\leq n+C\log n$.\\

\noindent\textbf{Proof of Statement 2.} Assume $m>n+C\log n$. Let $\a\in Good(n,\epsilon)$ and $l\in W_{\a}$. If $m>n+C\log n$ then by Lemma \ref{boring2} we must have $m>l +h_{\a,l}(n-l)+(j_{\a,l}-l)$. In which case the same argument as is used in the proof of the second part of Statement $1$ applies and we have the desired bound
\begin{equation*}
\label{Case2bound}
\m([\c a_{1}\ldots a_{l}(a_{l+1}\ldots a_{n})^{h_{\a,l}}a_{l+1}\ldots a_{j_{\a,l}}]\cap E_{m})\leq m\cdot \m([\c \a])g_{2}(|\c|+m)^{\dim_{S}(\Phi)}g_{2}(|\c|+n)^{\dim_{S}(\Phi)}.
\end{equation*}\\

\end{proof}

Equipped with Proposition \ref{Measureprop} we will now prove the following statement.
\begin{prop}
\label{workingprop}
There exists a constant $C_{1}=C_{1}(\c)$ such that  $$\sum_{n,m=N}^{Q}\m(E_{n}\cap E_{m})\ll \m([\c])\left(\sum_{n=N}^{Q}n\cdot g_{2}(|\c|+n)^{\dim_{S}(\Phi)}+\left(\sum_{n=N}^{Q}n\cdot g_{2}(|\c|+n)^{\dim_{S}(\Phi)}\right)^{2}\right)+C_{1}.$$
\end{prop}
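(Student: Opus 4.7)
The plan is to decompose
\begin{equation*}
\sum_{n,m=N}^{Q}\m(E_n\cap E_m) = \sum_{n=N}^{Q}\m(E_n) + 2\sum_{N\leq n<m\leq Q}\m(E_n\cap E_m),
\end{equation*}
and apply Proposition~\ref{Measureprop} cylinder-by-cylinder to the off-diagonal part. Writing $S:=\sum_{n=N}^{Q}n\cdot g_2(|\c|+n)^{\dim_S(\Phi)}$, the diagonal satisfies $\sum_{n=N}^{Q}\m(E_n)\ll \m([\c])\cdot S$ by Lemma~\ref{Divergence lemma}. For each pair $n<m$ I would expand $\m(E_n\cap E_m)=\sum_{\a\in Good(n,\epsilon)}\sum_{l\in W_{\a}}\m([\c a_1\ldots a_l(a_{l+1}\ldots a_n)^{h_{\a,l}}a_{l+1}\ldots a_{j_{\a,l}}]\cap E_m)$, insert the appropriate bound from Proposition~\ref{Measureprop}, sum over $l\in W_{\a}$ (at most $n$ terms), and use $\sum_{\a\in Good(n,\epsilon)}\m([\c\a])\leq \m([\c])$ to replace the $\m([\c\a])$ weights by an overall factor of $\m([\c])$.

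The regime $m>n+C\log n$ contributes only $m\cdot \m([\c\a])g_2(|\c|+n)^{\dim_S(\Phi)}g_2(|\c|+m)^{\dim_S(\Phi)}$, which after the reductions above and a sum over $n,m$ gives $\ll \m([\c])\cdot S^2$. The analogous third summand in the regime $n<m\leq n+C\log n$ is handled identically and again produces $\ll \m([\c])\cdot S^2$. For the middle summand $\m([\c\a])(\max_a p_a)^{m-n}g_2(|\c|+m)^{\dim_S(\Phi)}$ appearing in the second regime I would exchange the order of summation: for fixed $m$ one has $\sum_{n<m}n(\max_a p_a)^{m-n}\ll m$ by the geometric-series bound, so this summand contributes $\ll \m([\c])\sum_m m\cdot g_2(|\c|+m)^{\dim_S(\Phi)}=\m([\c])\cdot S$.

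The main obstacle is the first summand $\m([\c\a])e^{(h_\p+\epsilon)k_m}g_2(|\c|+n)^{\dim_S(\Phi)}g_2(|\c|+m)^{\dim_S(\Phi)}$ of the second regime. The idea is to balance the polynomial-in-$m$ growth of $e^{(h_\p+\epsilon)k_m}$ against the strong decay of $g_2$: by \eqref{Anna2} we have $e^{(h_\p+\epsilon)k_m}\leq m^\gamma$ for some $\gamma\in(1,2)$, and since $m\leq n+C\log n$ both $m^\gamma\ll n^\gamma$ and $g_2(|\c|+m)^{\dim_S(\Phi)}\leq (|\c|+m)^{-2}\ll n^{-2}$ (the latter from Lemma~\ref{g1lemma}); moreover the inner sum over $m$ contains at most $C\log n$ terms. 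After the reductions in $l$ and $\a$, the overall contribution of this summand is
\begin{equation*}
\ll \m([\c])\sum_{n=N}^{Q} n^{\gamma-1}\log n\cdot g_2(|\c|+n)^{\dim_S(\Phi)}.
\end{equation*}
Since $\gamma-1<1$, for $n$ beyond a threshold depending only on $\gamma$ we have $n^{\gamma-1}\log n\leq n$, so the tail is $\ll \m([\c])\cdot S$; the finitely many remaining small-$n$ terms are bounded by an absolute constant, using $\sum_{n}g_2(|\c|+n)^{\dim_S(\Phi)}\leq \sum_{n}(|\c|+n)^{-2}<\infty$, and produce the additive constant $C_1(\c)$. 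It is precisely at this step that the hypotheses \eqref{technical1}--\eqref{technical3} and the cutoff $g_1(n)\leq n^{-2/\dim_S(\Phi)}$ are essential: they guarantee $\gamma<2$ and hence the absorption of the polynomial factor $e^{(h_\p+\epsilon)k_m}$ into a linear-in-$n$ term. Summing the diagonal, the two off-diagonal regimes, and their three summands yields the claimed bound.
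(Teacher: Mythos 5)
Your proof is correct and follows the same overall strategy as the paper: split into the diagonal, the near-diagonal regime $n<m\leq n+C\log n$, and the far regime $m>n+C\log n$; feed in the two parts of Proposition~\ref{Measureprop}; and handle the three summands of the near-diagonal bound separately, with the geometric-series collapse for the $(\max_a p_a)^{m-n}$ term and the polynomial cancellation for the $e^{(h_\p+\epsilon)k_m}$ term. The only cosmetic difference is in the treatment of that last term: you keep $g_2(|\c|+n)^{\dim_S(\Phi)}$ in play, absorb the tail into $\m([\c])\cdot S$ via the threshold inequality $n^{\gamma-1}\log n\leq n$, and dump the finite prefix into $C_1$; the paper instead applies the cutoff $g_2(|\c|+n)^{\dim_S(\Phi)}\leq n^{-2}$ a second time so that this entire contribution becomes a pure constant $\m([\c])\sum_n C\log n/n^{3-\gamma}$, and it reaches the inner bound $\sum_{m}m^{\gamma-2}\ll C\log n\cdot n^{\gamma-2}$ via an integral comparison plus the mean value theorem rather than your more direct ``number of terms times largest term'' estimate. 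Both routes give the same order and the same final statement.
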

\begin{proof}
We start our proof by rewriting $\sum_{n,m=N}^{Q}\m(E_{n}\cap E_m):$
\begin{align*}
\sum_{n,m=N}^{Q}\m(E_{n}\cap E_{m})&=\sum_{n=N}^{Q}\m(E_{n})+2\sum_{n=N}^{Q-1}\sum_{m=n+1}^{Q}\m(E_{n}\cap E_{m})\\
&=\underbrace{\sum_{n=N}^{Q}\m(E_{n})}_{A}+\underbrace{2\sum_{n=N}^{Q-1}\sum_{m=n+1}^{\min\{Q,n+C\log n\}}\m(E_{n}\cap E_{m})}_{B}+\underbrace{2\sum_{n=N}^{Q-1}\sum_{n+C\log n<m\leq Q}\m(E_n\cap E_m)}_{C}.
\end{align*}
We will focus on the three terms A, B, and C individually. By Lemma \ref{Divergence lemma} we have the following bound for term A:
\begin{equation}
\label{part1}
\sum_{n=N}^{Q}\m(E_{n})\asymp\m([\c])\sum_{n=N}^{Q}n\cdot g_{2}(|\c|+n)^{\dim_{S}(\Phi)}.
\end{equation}
Now focusing on the term $B,$ if we apply Statement $1$ from Proposition \ref{Measureprop} we have 
\begin{align*}
&\sum_{n=N}^{Q-1}\sum_{m=n+1}^{\min\{Q,n+C\log n\}}\m(E_{n}\cap E_{m})\\
= & \sum_{\stackrel{n=N}{g_{2}(n)\neq 0}}^{Q-1}\sum_{\stackrel{m=n+1}{g_{2}(m)\neq 0}}^{\min\{Q,n+C\log n\}}\sum_{\a\in Good(n,\epsilon)}\sum_{l\in W_{\a}}\m([\c a_{1}\ldots a_{l}(a_{l+1}\ldots a_{n})^{h_{\a,l}}a_{l+1}\ldots a_{j_{\a,l}}]\cap E_{m})\\
\ll& \underbrace{\sum_{n=N}^{Q-1}\sum_{m=n+1}^{\min\{Q,n+C\log n\}}\sum_{\a\in Good(n,\epsilon)}\sum_{l\in W_{\a} }\m([\c \a])e^{(h_{\p}+\epsilon)k_{m}}g_{2}(|\c|+n)^{\dim_{S}(\Phi)} g_{2}(|\c|+m)^{\dim_{S}(\Phi)}}_{B1}\\
&+\underbrace{\sum_{n=N}^{Q-1}\sum_{m=n+1}^{\min\{Q,n+C\log n\}}\sum_{\a\in Good(n,\epsilon)}\sum_{l\in W_{\a} }\m([\c \a])(\max_{a\in \A}p_{a})^{m-n}g_{2}(|\c|+m)^{\dim_{S}(\Phi)}}_{B2}\\
&+\underbrace{\sum_{n=N}^{Q-1}\sum_{m=n+1}^{\min\{Q,n+C\log n\}}\sum_{\a\in Good(n,\epsilon)}\sum_{l\in W_{\a} }m\cdot\m([\c \a]) g_{2}(|\c|+m)^{\dim_{S}(\Phi)}g_{2}(|\c|+n)^{\dim_{S}(\Phi)}}_{B3}.
\end{align*}
Focusing on the term B1 in the above, we know by \eqref{Anna2} that
$$e^{(h_{\p}+\epsilon)k_{m}}\leq m^{\gamma}$$ for some $\gamma\in(1,2)$. Using this inequality we have
\begin{align*}
&\sum_{n=N}^{Q-1}\sum_{m=n+1}^{\min\{Q,n+C\log n\}}\sum_{\a\in Good(n,\epsilon)}\sum_{l\in W_{\a} }\m([\c \a])e^{(h_{\p}+\epsilon)k_{m}}g_{2}(|\c|+n)^{\dim_{S}(\Phi)} g_{2}(|\c|+m)^{\dim_{S}(\Phi)}\\
\leq &\sum_{n=N}^{Q-1}\sum_{m=n+1}^{\min\{Q,n+C\log n\}}\sum_{\a\in Good(n,\epsilon)}\sum_{l\in W_{\a} }m^{\gamma}\m([\c \a])g_{2}(|\c|+n)^{\dim_{S}(\Phi)} g_{2}(|\c|+m)^{\dim_{S}(\Phi)}\\
\leq & \sum_{n=N}^{Q-1}\sum_{m=n+1}^{\min\{Q,n+C\log n\}}\sum_{\a\in Good(n,\epsilon)}n\cdot m^{\gamma}\m([\c \a])g_{2}(|\c|+n)^{\dim_{S}(\Phi)} g_{2}(|\c|+m)^{\dim_{S}(\Phi)}\\
\leq & \m([\c])\sum_{n=N}^{Q-1}\sum_{m=n+1}^{\min\{Q,n+C\log n\}}n\cdot m^{\gamma} g_{2}(|\c|+n)^{\dim_{S}(\Phi)} g_{2}(|\c|+m)^{\dim_{S}(\Phi)}\\
\leq & \m([\c])\sum_{n=N}^{Q-1}n\cdot g_{2}(|\c|+n)^{\dim_{S}(\Phi)}\sum_{m=n+1}^{\min\{Q,n+C\log n\}}m^{\gamma}\cdot g_{2}(|\c|+m)^{\dim_{S}(\Phi)}
\end{align*} It follows from Lemma \ref{g1lemma} and Lemma \ref{g2lemma} that $g_{2}(|\c|+m)^{\dim_{S}(\Phi)}\leq m^{-2}$ for all $m\in \mathbb{N}$. Therefore we have
\begin{align*}
& \m([\c])\sum_{n=N}^{Q-1}n\cdot g_{2}(|\c|+n)^{\dim_{S}(\Phi)}\sum_{m=n+1}^{\min\{Q,n+C\log n\}}m^{\gamma}\cdot g_{2}(|\c|+m)^{\dim_{S}(\Phi)}\\
\leq & \m([\c])\sum_{n=N}^{Q-1}n\cdot g_{2}(|\c|+n)^{\dim_{S}(\Phi)}\sum_{m=n+1}^{\min\{Q,n+C\log n\}}m^{\gamma-2}\\
\ll & \m([\c])\sum_{n=N}^{Q-1}n\cdot g_{2}(|\c|+n)^{\dim_{S}(\Phi)}\int_{n+1}^{n+C\log n}x^{\gamma-2}\, dx\\
\ll& \m([\c])\sum_{n=N}^{Q-1}n\cdot g_{2}(|\c|+n)^{\dim_{S}(\Phi)}\left((n+C\log n)^{\gamma-1}-(n+1)^{\gamma-1}\right)\\
\stackrel{M.V.T}{\ll}&  \m([\c])\sum_{n=N}^{Q-1}n\cdot g_{2}(|\c|+n)^{\dim_{S}(\Phi)}\cdot \left(C\log n \cdot \frac{1}{n^{2-\gamma}}\right)\\
\ll& \m([\c])\sum_{n=N}^{Q-1}\frac{C\log n}{n^{3-\gamma}}\\
\ll& \m([\c])\sum_{n=1}^{\infty}\frac{C\log n}{n^{3-\gamma}}.\\
\end{align*}
In the penultimate line in the above we used that $g_{2}(|\c|+n)^{\dim_{S}(\Phi)}\leq n^{-2}$. Because $\gamma\in(1,2)$ we know that $\sum_{n=1}^{\infty}\frac{C\log n}{n^{3-\gamma}}<\infty$. Therefore we can assert that there exists a constant $C_{1}=C_{1}(\c)$ so that the term $B1$ satisfies
\begin{equation}
\label{C1bound}
\sum_{n=N}^{Q-1}\sum_{m=n+1}^{\min\{Q,n+C\log n\}}\sum_{\a\in Good(n,\epsilon)}\sum_{l\in W_{\a} }\m([\c \a])e^{(h_{\p}+\epsilon)k_{m}}g_{2}(|\c|+n)^{\dim_{S}(\Phi)} g_{2}(|\c|+m)^{\dim_{S}(\Phi)}\leq C_{1}.
\end{equation}
 Turning our attention to the term $B2$ we see that
\begin{align*}
&\sum_{n=N}^{Q-1}\sum_{m=n+1}^{\min\{Q,n+C\log n\}}\sum_{\a\in Good(n,\epsilon)}\sum_{l\in W_{\a} }\m([\c \a])(\max_{a\in \A}p_{a})^{m-n}g_{2}(|\c|+m)^{\dim_{S}(\Phi)}\\
\ll & \sum_{n=N}^{Q-1}\sum_{m=n+1}^{\min\{Q,n+C\log n\}}\sum_{\a\in Good(n,\epsilon)}n\cdot \m([\c \a])(\max_{a\in \A}p_{a})^{m-n}g_{2}(|\c|+m)^{\dim_{S}(\Phi)}\\
\ll & \m([\c]) \sum_{n=N}^{Q-1}\sum_{m=n+1}^{\min\{Q,n+C\log n\}}n\cdot (\max_{a\in \A}p_{a})^{m-n}g_{2}(|\c|+m)^{\dim_{S}(\Phi)}\\
\ll &\m([\c])\sum_{m=N+1}^{Q}\sum_{n=1}^{m-1}n\cdot (\max_{a\in \A}p_{a})^{m-n}g_{2}(|\c|+m)^{\dim_{S}(\Phi)}\\
= & \m([\c])\sum_{m=N+1}^{Q}g_{2}(|\c|+m)^{\dim_{S}(\Phi)}\sum_{n=1}^{m-1}n\cdot (\max_{a\in \A}p_{a})^{m-n}.
\end{align*}
Now using the fact that $\sum_{n=1}^{m-1}n\cdot (\max_{a\in \A}p_{a})^{m-n}\ll m$ we see that 
\begin{equation*}
\m([\c])\sum_{m=N+1}^{Q}g_{2}(|\c|+m)^{\dim_{S}(\Phi)}\sum_{n=1}^{m-1}n\cdot (\max_{a\in \A}p_{a})^{m-n}\ll \m([\c])\sum_{m=N+1}^{Q}m\cdot g_{2}(|\c|+m)^{\dim_{S}(\Phi)}.
\end{equation*}
So our term $B2$ must satisfy 
\begin{equation}
\label{Hometime}
\sum_{n=N}^{Q-1}\sum_{m=n+1}^{\min\{Q,n+C\log n\}}\sum_{\a\in Good(n,\epsilon)}\sum_{l\in W_{\a} }\m([\c \a])(\max_{a\in \A}p_{a})^{m-n}g_{2}(|\c|+m)^{\dim_{S}(\Phi)}\ll \m([\c])\sum_{n=N}^{Q}n\cdot g_{2}(|\c|+n)^{\dim_{S}(\Phi)}.
\end{equation}
Now focusing on the term $B3$, we have 
\begin{align}
\label{nearlyhometime}
&\sum_{n=N}^{Q-1}\sum_{m=n+1}^{\min\{Q,n+C\log n\}}\sum_{\a\in Good(n,\epsilon)}\sum_{l\in W_{\a} }m\cdot\m([\c \a]) g_{2}(|\c|+m)^{\dim_{S}(\Phi)}g_{2}(|\c|+n)^{\dim_{S}(\Phi)}\nonumber\\
\leq & \sum_{n=N}^{Q-1}\sum_{m=n+1}^{\min\{Q,n+C\log n\}}\sum_{\a\in Good(n,\epsilon)}n\cdot m\cdot \m([\c \a])g_{2}(|\c|+m)^{\dim_{S}(\Phi)}g_{2}(|\c|+n)^{\dim_{S}(\Phi)}\nonumber\\
\leq &\m([\c]) \sum_{n=N}^{Q-1}\sum_{m=n+1}^{\min\{Q,n+C\log n\}}n\cdot m\cdot g_{2}(|\c|+m)^{\dim_{S}(\Phi)}g_{2}(|\c|+n)^{\dim_{S}(\Phi)}\nonumber\\
\leq &\m([\c]) \sum_{n=N}^{Q-1}n\cdot g_{2}(|\c|+n)^{\dim_{S}(\Phi)}\sum_{m=n+1}^{\min\{Q,n+C\log n\}}m\cdot g_{2}(|\c|+m)^{\dim_{S}(\Phi)}\nonumber\\
\leq &\m([\c])\left(\sum_{n=N}^{Q}n\cdot g_{2}(|\c|+n)^{\dim_{S}(\Phi)}\right)^{2}.
\end{align}Combining \eqref{C1bound}, \eqref{Hometime}, and \eqref{nearlyhometime} we have the following bound for the term $B$
\begin{align}
\label{partb}
&2\sum_{n=N}^{Q-1}\sum_{m=n+1}^{\min\{Q,n+C\log n\}}\m(E_{n}\cap E_{m})\nonumber\\
\ll &\m([\c])\left(\sum_{n=N}^{Q}n\cdot g_{2}(|\c|+n)^{\dim_{S}(\Phi)}+\sum_{n=N}^{Q}n\cdot g_{2}(|\c|+n)^{\dim_{S}(\Phi)}\right)^{2}+C_{1}.
\end{align}

By an analogous argument to that used to bound $B3$, by applying Statement $2$ from Proposition \ref{Measureprop} it can also be shown that the term $C$ satisfies 
\begin{equation}
\label{part3}
2\sum_{n=N}^{Q-1}\sum_{n+C\log n<m\leq Q}\m(E_n\cap E_m)\ll \m([\c])\left(\sum_{n=N}^{Q}n\cdot g_{2}(|\c|+n)^{\dim_{S}(\Phi)}\right)^{2}.
\end{equation} Combining \eqref{part1}, \eqref{partb}, and \eqref{part3} we may conclude our desired bound $$\sum_{n,m=N}^{Q}\m(E_{n}\cap E_{m})\ll \m([\c])\left(\sum_{n=N}^{Q}n\cdot g_{2}(|\c|+n)^{\dim_{S}(\Phi)}+\left(\sum_{n=N}^{Q}n\cdot g_{2}(|\c|+n)^{\dim_{S}(\Phi)}\right)^{2}\right)+C_{1}.$$ 
\end{proof}
Combining Proposition \ref{workingprop} together with Lemma \ref{Erdos lemma} and Lemma \ref{Divergence lemma} we may conclude that
\begin{align*}
\m(\limsup_{n\to\infty} E_n)&\geq \limsup_{Q\to\infty}\frac{\left(\sum_{n=N}^{Q}\m(E_{n})\right)^{2}}{\sum_{n,m=N}^{Q}\m(E_{n}\cap E_{m})}\\
&\gg \limsup_{Q\to\infty}\frac{\m([\c])^{2}\left(\sum_{n=N}^{Q}n\cdot g_{2}(|\c|+n)^{\dim_{S}(\Phi)}\right)^2}{\m([\c]\left(\sum_{n=N}^{Q}n\cdot g_{2}(|\c|+n)^{\dim_{S}(\Phi)}+\left(\sum_{n=N}^{Q}n\cdot g_{2}(|\c|+n)^{\dim_{S}(\Phi)}\right)^{2}\right)+C_{1}}\\
&= \limsup_{Q\to\infty}\frac{\m([\c])^{2}\left(\sum_{n=N}^{Q}n\cdot g_{2}(|\c|+n)^{\dim_{S}(\Phi)}\right)^2}{\m([\c]\left(\sum_{n=N}^{Q}n\cdot g_{2}(|\c|+n)^{\dim_{S}(\Phi)}+\left(\sum_{n=N}^{Q}n\cdot g_{2}(|\c|+n)^{\dim_{S}(\Phi)}\right)^{2}\right)}\\
&\gg \m([\c]).
\end{align*}
Thus \eqref{WTS2} holds and our proof of Statement $2$ from Theorem \ref{Main theorem} is complete. We emphasise that in the penultimate line in the above we used the fact that the constant $C_{1}$ does not affect the limit. This is important because the implicit constant in \eqref{WTS2} needs to be independent of $\c$ if we want to apply Lemma \ref{Density lemma}. 

\section{Applications of the mass transference principle}
\label{MTP section}
The mass transference principle of Beresnevich and Velani \cite{BV} is a powerful tool that allows one to derive information on the Hausdorff measure of a limsup set. We do not state it in its full generality, but instead content ourselves with the following which is better suited for our purposes.

Let $X\subset \mathbb{R}^d$. Then $X$ is said to be Ahlfors regular if there exists $C_{1},C_{2}>0$ such that $$C_{1}r^{\dim_{H}(X)}\leq \mathcal{H}^{\dim_{H}(X)}(B(x,r)\cap X)\leq C_{2}r^{\dim_{H}(X)}$$ for all $x\in X$ and $r$ sufficiently small. Given an Ahlfors regular set $X$, a ball $B(x,r)$ in $X$, and $s>0$, we let $B^{s}=B(x,r^{s/\dim_{H}(X)})$. The following theorem is a simplified version of Theorem 3 from \cite{BV}.

\begin{thm}
	\label{Mass transference principle}
	Let $X$ be Ahlfors regular and $(B_l)$ be a sequence of balls in $X$ with radii tending to zero. Let $s>0$ and suppose that for any ball $B$ in $X$ we have $$\mathcal{H}^{\dim_{H}(X)}\left(B\cap \limsup_{l\to \infty}B_{l}^{s}\right)=\mathcal{H}^{\dim_{H}(X)}(B).$$ Then, for any ball $B$ in $X$ $$\mathcal{H}^{s}\left(B\cap \limsup_{l\to\infty}B_l\right)=\mathcal{H}^{s}(B).$$
\end{thm}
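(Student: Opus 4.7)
The plan is to follow the Cantor-subset construction strategy introduced by Beresnevich and Velani. First I would observe that the upper bound $\mathcal{H}^s(B\cap \limsup_{l\to\infty} B_l)\leq \mathcal{H}^s(B)$ is trivial by monotonicity, so only the reverse inequality must be proved. The cases $s\geq \dim_H(X)$ are degenerate: when $s>\dim_H(X)$ one has $\mathcal{H}^s(B)=0$ and there is nothing to show, while when $s=\dim_H(X)$ one has $B_l^s=B_l$ and the hypothesis \emph{is} the conclusion. Thus the substantive range is $s<\dim_H(X)$, where $B_l^s$ is strictly larger than $B_l$ for all small $r_l$, and the task is to pass from a top-dimensional full-measure statement about the inflated balls to a Hausdorff $s$-dimensional statement about the originals.

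Fix a ball $B\subset X$. The heart of the proof is an inductive construction of a nested sequence of compact sets $K_1\supset K_2\supset\cdots\subset B$, each $K_k$ being a finite disjoint union of closed balls $B_{l_i^{(k)}}$ drawn from the original sequence, with $\min_i l_i^{(k)}\to\infty$ as $k\to\infty$. Using the hypothesis that $\limsup_{l} B_l^s$ has full $\mathcal{H}^{\dim_H(X)}$-measure in every ball, together with the fact that radii tend to zero, a Vitali-type covering lemma (valid because Ahlfors regularity implies the doubling property) lets one, inside each parent ball of level $k-1$, extract a disjoint sub-family of inflated balls $B_{l_i^{(k)}}^s$ contained in that parent whose union captures a uniform positive fraction $\kappa>0$ of the parent's top-dimensional Hausdorff measure. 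The un-inflated balls $B_{l_i^{(k)}}$ are strictly smaller than their inflations and hence also disjoint, serving as the level-$k$ building blocks. The limit set $K_\infty=\bigcap_k K_k$ is then compact, non-empty, contained in $B$, and lies in $\limsup_{l\to\infty} B_l$ because each of its points belongs to infinitely many of the chosen $B_{l_i^{(k)}}$.

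Next I would define a probability measure $\nu$ on $K_\infty$ by distributing mass at each level in proportion to the relative $\mathcal{H}^{\dim_H(X)}$-fraction each child ball contributes to its parent. Ahlfors regularity then gives $\nu(B_{l_i^{(k)}})\asymp (r_{l_i^{(k)}}^{s/\dim_H(X)})^{\dim_H(X)}=r_{l_i^{(k)}}^{s}$, precisely the target exponent. A case analysis on the relative scale $\rho$ establishes the Frostman-type bound $\nu(B(x,\rho))\ll \rho^{s}$ for every small $\rho$: when $\rho$ is comparable to some level-$k$ radius the above comparability is used directly, otherwise one counts disjoint level-$k$ balls meeting $B(x,\rho)$ via the doubling estimate. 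The mass distribution principle then yields $\mathcal{H}^s(K_\infty)\gg 1$, so $\mathcal{H}^s(B\cap \limsup B_l)>0$. Running the same construction inside every sub-ball $B'\subset B$ upgrades this to the full-measure conclusion $\mathcal{H}^s(B\cap \limsup B_l)=\mathcal{H}^s(B)$.

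The main obstacle is the inductive extraction step. One must simultaneously ensure that the inflated balls chosen at stage $k$ sit disjointly inside a single parent of $K_{k-1}$, that their un-inflated counterparts are also disjoint so as to form the next Cantor level, and that the union recovers a uniform positive fraction of the parent's $\mathcal{H}^{\dim_H(X)}$-mass (independent of $k$ and of the parent). Controlling the two competing scales $r_l$ and $r_l^{s/\dim_H(X)}$ inside an abstract Ahlfors regular metric space, without recourse to Euclidean-specific tools, is the delicate point, and this is precisely where Ahlfors regularity is used most heavily, through a Besicovitch--Vitali type covering lemma valid in any doubling metric measure space.
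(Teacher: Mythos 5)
The paper does not actually prove this statement: Theorem~\ref{Mass transference principle} is presented as ``a simplified version of Theorem 3 from \cite{BV}'' and is imported without proof. So there is no in-paper argument to compare against; your proposal can only be judged against the original Beresnevich--Velani proof that the citation invokes.

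Measured against that, your outline is essentially the right strategy and correctly identifies the key mechanism. The reduction to $s<\dim_{H}(X)$ and the observations that $s>\dim_{H}(X)$ is vacuous and $s=\dim_{H}(X)$ collapses $B_{l}^{s}=B_{l}$ are both accurate. The core of the Beresnevich--Velani argument is precisely the Cantor-type construction you describe: nested generations of disjoint balls drawn from the original sequence, selected so that their $s$-inflations fill a uniform proportion of each parent (their ``$K_{\mathbb{G},B}$-Lemma'', which is a covering/packing step using the doubling property coming from Ahlfors regularity), followed by a canonically defined mass distribution on the limit Cantor set, a Frostman estimate $\nu(B(x,\rho))\ll\rho^{s}$ via the two-scale case analysis, and the mass distribution principle. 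You have captured all of these ingredients at the level of a sketch, and you have also correctly flagged the genuinely delicate part, namely controlling the interaction of the scales $r_{l}$ and $r_{l}^{s/\dim_{H}(X)}$ inside a general Ahlfors regular space.

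The one place where your sketch is imprecise is the final upgrade. Obtaining $\mathcal{H}^{s}(K_{\infty})\gg 1$ with an absolute implicit constant is not quite what the mass distribution principle gives: the Frostman constant necessarily scales with the radius of the ambient ball $B$, so what you actually get from a single construction inside $B$ is $\mathcal{H}^{s}(K_{\infty})\gg r_{B}^{s}$, not a universal constant. This does not break the argument, since in the range $s<\dim_{H}(X)$ the target is $\mathcal{H}^{s}(B)=\infty$, and packing $B$ with $\asymp r^{-\dim_{H}(X)}$ disjoint sub-balls of radius $r$, each carrying $\gg r^{s}$ of the limsup set, sends the total to infinity as $r\to 0$. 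But it is worth noting that Beresnevich and Velani do not finish this way: they build, for each $\eta>0$, a single Cantor set $K(\eta)\subset B\cap\limsup B_{l}$ with $\mathcal{H}^{s}(K(\eta))\geq\eta$ directly, by iterating the extraction lemma enough times within each generation. Either route works; yours is arguably simpler to phrase but you should be explicit about the $r_{B}^{s}$ dependence rather than asserting $\gg 1$ outright.
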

It is a well know fact that if an IFS $\Phi$ satisfies the open set condition then the corresponding self-similar set is Ahlfors regular. It is also well known that if $\Phi$ satisfies the open set condition then $\mu$ is equivalent to the restriction of $\mathcal{H}^{\dim_{S}(\Phi)}$ on $X$. Combining these facts together with Theorem \ref{Main theorem} and Theorem \ref{Mass transference principle}, we may deduce the following statement. 

\begin{thm}
	\label{MTP corollary}
Let $\Phi=\{\phi_a\}_{a\in \A}$ be an IFS which satisfies the open set condition. Let $\Psi:\cup_{n=1}^{\infty}\A^{n}\to [0,\infty)$ be given by $\Psi(\a)=Diam(X_{\a})g(|\a|)$ for some function $g:\mathbb{N}\to [0,\infty)$ satisfying $$\sum_{n=1}^{\infty}\sum_{\a\in\A^n}n\cdot (Diam(X_{\a})g(n))^{\dim_{S}(\Phi)}=\infty.$$ Then the following statements are true:
\begin{enumerate}
	\item Assume that $$h_{\p}<-2\log \sum_{a\in \A}p_{a}^{2}$$ and that $g$ is non-increasing. Then for any $t\geq 1$ we have $\mathcal{H}^{\dim_{H}(X)/t}(W_{\Phi}(\Psi^{t}))=\mathcal{H}^{\dim_{H}/t}(X)$.
	\item If $\Phi$ is equicontractive then for any $t\geq 1$ we have $\mathcal{H}^{\dim_{H}(X)/t}(W_{\Phi}(\Psi^{t}))=\mathcal{H}^{\dim_{H}/t}(X)$.
\end{enumerate}
\end{thm}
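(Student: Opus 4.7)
The plan is to deduce both statements as a direct application of the mass transference principle (Theorem \ref{Mass transference principle}) to the family of balls whose $\limsup$ equals $W_{\Phi}(\Psi^{t})$. Under the open set condition, the self-similar set $X$ is Ahlfors regular with $\dim_{H}(X)=\dim_{S}(\Phi)$, and the self-similar measure $\mu$ is equivalent to the restriction of $\mathcal{H}^{\dim_{S}(\Phi)}$ to $X$. These two facts, both recalled in the paragraph preceding the statement of the theorem, are what allow us to translate the measure statements in Theorem \ref{Main theorem} (which are statements about $\mu$) into statements about $\mathcal{H}^{\dim_{H}(X)}$.

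The central observation is that the scaling in Theorem \ref{Mass transference principle} matches the exponent $t$ perfectly. Enumerate the balls defining $W_{\Phi}(\Psi^{t})$ by $l$, so each $B_{l}$ is of the form $B(\pi(a_{1}\cdots a_{k}(a_{k+1}\cdots a_{n})^{\infty}),\Psi(\a)^{t})$. Setting $s=\dim_{H}(X)/t$, the blown-up ball $B_{l}^{s}$ has radius $(\Psi(\a)^{t})^{s/\dim_{H}(X)}=\Psi(\a)$, so that $\limsup_{l\to\infty}B_{l}^{s}=W_{\Phi}(\Psi)$. Crucially, the divergence hypothesis on $g$ appearing in Theorem \ref{MTP corollary} is exactly the one needed by Theorem \ref{Main theorem} at dimension $\dim_{S}(\Phi)$ applied to $\Psi$, not $\Psi^{t}$.

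To verify the hypothesis of the mass transference principle, I apply Theorem \ref{Main theorem}: Statement $2$ under the hypotheses of part (1) of Theorem \ref{MTP corollary} (where $h_{\p}<-2\log \sum p_{a}^{2}$ and $g$ is non-increasing), and Statement $3$ under the hypotheses of part (2) (where $\Phi$ is equicontractive, so $g$ need not be monotonic). Either case yields $\mu(W_{\Phi}(\Psi))=1$. Since $\mu$ is a probability measure on $X$ that is equivalent to $\mathcal{H}^{\dim_{H}(X)}\lfloor_{X}$, this gives $\mathcal{H}^{\dim_{H}(X)}(X\setminus W_{\Phi}(\Psi))=0$, hence $\mathcal{H}^{\dim_{H}(X)}(B\cap W_{\Phi}(\Psi))=\mathcal{H}^{\dim_{H}(X)}(B)$ for every ball $B$ in $X$. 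Theorem \ref{Mass transference principle} then yields $\mathcal{H}^{\dim_{H}(X)/t}(B\cap W_{\Phi}(\Psi^{t}))=\mathcal{H}^{\dim_{H}(X)/t}(B)$ for every such ball, and since $X$ is a compact Ahlfors regular set it can be covered by countably many small balls in $X$, which upgrades the local full-measure statement to $\mathcal{H}^{\dim_{H}(X)/t}(W_{\Phi}(\Psi^{t}))=\mathcal{H}^{\dim_{H}(X)/t}(X)$.

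There is no serious obstacle: once Theorem \ref{Main theorem} is in hand, the deduction is essentially bookkeeping built around the scaling identity $(\Psi(\a)^{t})^{s/\dim_{H}(X)}=\Psi(\a)$ with $s=\dim_{H}(X)/t$. The only point worth emphasising is the conceptual one: the mass transference principle allows us to verify the divergence condition at the natural exponent $\dim_{S}(\Phi)$ (for $\Psi$), and in return it automatically delivers a Hausdorff measure statement for the shrunken approximation function $\Psi^{t}$ at the lower dimension $\dim_{S}(\Phi)/t$, which would be significantly harder to obtain directly.
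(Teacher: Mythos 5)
Your proof is correct and follows exactly the route the paper intends (the paper does not actually write out the proof of Theorem \ref{MTP corollary}, merely stating that it follows by combining Ahlfors regularity under the open set condition, the equivalence $\mu \sim \mathcal{H}^{\dim_{S}(\Phi)}|_{X}$, Theorem \ref{Main theorem}, and Theorem \ref{Mass transference principle}). The key computation you highlight, that with $s=\dim_{H}(X)/t$ one has $(\Psi(\a)^{t})^{s/\dim_{H}(X)}=\Psi(\a)$ so that the blown-up limsup set is precisely $W_{\Phi}(\Psi)$, together with the observation that $\mu(W_{\Phi}(\Psi))=1$ transfers to a full $\mathcal{H}^{\dim_{H}(X)}$-measure statement on every ball, is exactly the intended deduction.
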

We conclude this section by mentioning that by following the arguments used in Section \ref{Application section}, one can use Theorem \ref{MTP corollary} to prove a number of statements on the Hausdorff measure of certain limsup sets arising from the study of intrinsic Diophantine Approximation on self-similar sets. We leave the details to the interested reader.\\

\noindent \textbf{Acknowledgements.} The author would like to thank Baowei Wang for his feedback on an initial draft.

\end{document}